\documentclass[a4paper,11pt,english]{smfart}
\usepackage{amsfonts}
\usepackage{amsmath,amsthm} 
\usepackage{hyperref} 
\usepackage{latexsym}
\usepackage{array}
\usepackage{amssymb}
\usepackage{enumerate}
\usepackage[francais,english]{babel}
\binoppenalty=9999 \relpenalty=9999 
\usepackage{color}

\theoremstyle{plain}  \newtheorem{theorem}{Theorem}[section]
\newtheorem{lemma}[theorem]{Lemma}
\newtheorem{proposition}[theorem]{Proposition}
 
\newtheorem{definition}[theorem]{Definition} \theoremstyle{remark}
\newtheorem{remark}[theorem]{Remark}

\newcommand{\lsim}{  \lesssim   }
\newcommand{\gsim}{  \gtrsim   }

\newcommand{\de}{  \delta   }

\renewcommand{\(}{   \left[\hspace{-1ex}\left[\hspace{0.5ex}  }
\renewcommand{\)}{  \hspace{0.5ex} \right]\hspace{-1ex}\right] }
\newcommand{\R}{  \mathbb{R}   }

\newcommand{\eps}{\varepsilon}

\newcommand{\C}{  \mathbb{C}   }
\newcommand{\Z}{  \mathbb{Z}   }
\newcommand{\N}{  \mathbb{N}   }
\newcommand{\Nd}{  \mathcal{N}_2   }
\newcommand{\Nb}{  \bar{\mathbb{N}}   }
\newcommand{\A}{  \mathcal{A}   }
\newcommand{\M}{  \mathcal{M}   }

\newcommand{\Ca}{  \mathcal{C}   }

\newcommand{\E}{  \mathcal{E}   }
\newcommand{\NF}{  \mathcal{NF}   }

\renewcommand{\O}{  \mathcal{O}   }

\newcommand{\T}{  \mathbb{T}   }
\newcommand{\Tc}{  \mathcal{T}   }
\newcommand{\D}{  \mathcal{D}   }

\newcommand{\Cc}{  \mathcal{C}   }
\newcommand{\dd}{  \text{d}   }

\newcommand{\om}{  \omega   }
\newcommand{\Om}{  \Omega   }

\renewcommand{\a}{  \alpha   }
\newcommand{\p}{  \partial   }
\renewcommand{\b}{  \beta   }
\renewcommand{\P}{  \mathcal{P}   }
\newcommand{\ga}{\gamma   }
\newcommand{\s}{  \sigma   }
\newcommand{\lan}{  \langle  }
\newcommand{\ran}{  \rangle  }

\newcommand{\ka}{  \kappa   }
\renewcommand{\r}{  \rho   }
\newcommand{\bk}{  {\bf k}   }
\newcommand{\bj}{  {\bf j}   }

\newcommand{\la}{  \lambda_a   }
\newcommand{\lb}{  \lambda_b   }

\newcommand{\G}{  \Gamma   }

\newcommand{\zz}{  \mathfrak z  }
\renewcommand{\phi}{  \varphi  }
\renewcommand{\L}{  \mathcal{L}   }
\renewcommand{\S}{  \mathbb{S}   }
\newcommand{\nazz}{ (\nabla_\rho\cdot {\mathfrak z} ) }
\newcommand{\diag}{\operatorname{diag}}
\newcommand{\meas}{\operatorname{meas}}
\newcommand{\card}{\operatorname{card}}

\newcommand{\Span}{\operatorname{Span}}
\newcommand{\be}{\begin{equation}}
\newcommand{\ee}{\end{equation}}
\newcommand{\ben}{\begin{equation*}}
\newcommand{\een}{\end{equation*}}
\newcommand{\ban}{\begin{align*}}
\newcommand{\ean}{\end{align*}}
\numberwithin{equation}{section}

\newcommand{\ff}{ [f]^{s}_{\s,\mu,\D} }
\newcommand{\fb}{ [f]^{s,\b}_{\s,\mu,\D} }
\newcommand{\fbp}{ [f]^{s,\b+}_{\s,\mu,\D} }
\newcommand{\Tbp}{\mathcal{T}^{s,\b+}(\s,\mu,\D)}
\newcommand{\Tb}{\mathcal{T}^{s,\b}(\s,\mu,\D)}

\newcommand{\hh}{ [h]^{s}_{\s,\mu,\D} }

\newcommand{\hhh}{ [h]^{s,\b}_{\s,\mu,\D} }

 \author{ Beno\^it Gr\'ebert}
\address{Laboratoire de Math\'ematiques Jean Leray, Universit\'e de Nantes, UMR CNRS 6629\\
2, rue de la Houssini\`ere \\
44322 Nantes Cedex 03, France}
\email{benoit.grebert@univ-nantes.fr}

\title[KAM for  KG on $\mathbb S^2$]
{KAM for for KG on $\mathbb S^2$ and for the quantum harmonic oscillator on $\R^2$.}

\begin{document}

\begin{abstract}
In this paper we prove an abstract KAM theorem adapted to the Klein Gordon equation on the sphere $\mathbb S^2$ and for the  quantum harmonic oscillator on $\R^2$ with regularizing nonlinearity.
  \begin{center} {\bf \large october 2014} \end{center}\end{abstract}

\subjclass{ }
\keywords{Quantum harmonic oscillator, Hartree  equation, KG equation, KAM theory.}
\thanks{
}

\maketitle
\tableofcontents
\section{Introduction.}

If the KAM theorem is now well documented for nonlinear Hamiltonian PDEs in 1-dimensional context (see \cite{kuk87,Kuk2,pos89}) only few results exist for multidimensional PDEs.

Existence of quasi-periodic solutions of space-multidimensional  PDE were first proved in \cite{B1} (see also 
  \cite{B2}) but with a technic based on the Nash-Moser thorem that do not allow to analyse the linear stability of the obtained solutions.
 Some KAM-theorems for small-amplitude solutions of multidimensional beam equations (see \eqref{beam}   above)
  with typical $m$   were obtained in
  \cite{GY05, GY06}. Both works treat equations  with a constant-coefficient nonlinearity 
  $g(x,u)=g(u)$, which  is significantly easier than the general case. 
  The first complete KAM theorem for space-multidimensional PDE was obtained in \cite{EK10}. Also see \cite{Ber1, Ber2}. \\ 
  The technics  developed by Eliasson-Kuksin has been improved in \cite{EGK1,EGK2} to allow a KAM result without external parameters.  In these two papers the authors prove the existence of small amplitude quasi-periodic solutions of the beam equation on the d-dimensional torus. They further investigate  the stability of these solutions and give explicit examples where the solution is linearly unstable and thus exhibits hyperbolic features (a sort of whiskered torus).\\
All these examples concern PDEs on the torus, essentially because in that case the corresponding linear PDE is diagonalized in the Fourier basis and the structure of the resonant sets is almost the same for NLS, NLW or beam equation. In the present paper, adapting the technics in \cite{EK10}, we consider two important examples that do not fit in the Fourier context: the Klein-Gordon equation on the sphere $\mathbb S^2$ and the quantum harmonic oscillator on $\R^2$.\\
Notice that  existence of quasi-periodic solutions for NLW and NLS on compact Lie groups via  Nash Moser technics (and thus without linear stability) has been proved recently in \cite{BP,BCP}.

\medskip

To understand the new difficulties, let us begin by recalling briefly part of the method developed in \cite{EK10}.  Consider the non linear Schr\"odinger equation on $\T^d$
$$iu_t=-\Delta u + \text{nonlinear terms}, \quad x\in \T^d,\ t\in\R.$$
In Fourier variables it reads\footnote{The space $\Z^d$ is equipped with standard euclidian norm: $|k|^2=k_1^2+\cdots k_d^2$.}
$$i \, \dot u_k= |k|^2 u_k +\text{nonlinear terms},\quad k\in\Z^d.$$
So two Fourier modes indexed by $k,j\in\Z^d$ are (linearly) resonant when $|k|^2=|j|^2$. For the beam equation on the torus the relation of resonance is the same. The resonant sets $\E_k=\{j\in\Z^d\mid |j|^2=|k|^2\}$ realize a  natural clustering of $\Z^d$. All the modes in the block $\E_k$ have the same energy  and we can expect that the interactions are small between different blocks but could be of order one inside a block.
With this idea in mind, the principal step of the KAM theory, the resolution of the so called homological equation, leads to the inversion of an infinite matrix which is block-diagonal with respect to this clustering. 
 It turns out that these blocks have cardinality growing with $|k|$ making harder the control of the inverse of this matrix. As a consequence you loose regularity each time you solve the homological equation. Of course this is not acceptable for an infinite induction. The very nice idea in \cite{EK10} consists in considering a sub-clustering constructed as the equivalence classes of the equivalence relation on $\Z^d$ generated by the pre-equivalence relation
$$ a\sim b \Longleftrightarrow \left\{\begin{array}{l} |a|=|b| \\   {|a-b|}
 \leq \Delta \end{array}\right.$$
Let $[a]_\Delta$ denote the equivalence class of $a$. 
The crucial fact (proved \cite{EK10}) is that the blocks are {\it finite}
with a maximal ``diameter''
$$\max_{[a]_\Delta=[b]_\Delta} |a-b|\leq C_d \Delta^{\frac{(d+1)!}2} $$
depending only on $\Delta$. With such clustering, you do not loose regularity when solving the homological equation. 
Further, working in a phase space of analytic functions $u$ or equivalently, exponentially decreasing Fourier coefficients $u_k$, it turns out that the homological equation is "almost" block diagonal relatively to this clustering. 
Then you growth the parameter $\Delta$ at each step of the KAM iteration. 

\medskip

Unfortunately this estimate of the diameter of a block $[a]_\Delta$ by a constant independent of $|a|$ is a sort of miracle that do not persist in other cases. For instance if we consider the quantum harmonic oscillator on $\R^2$
$$i\, u_t=-\Delta u +|x|^2u+\text{nonlinear terms}, \quad x\in\R^2$$
the linear part diagonalizes on a Hermite basis $h_j\otimes h_k$ (see section 3) and the natural clustering is given by
the resonant sets $\{(k,j)\in\N^2\mid k+j= \text{const} \}$. We can easily convince ourself that there is no simple way to construct sub-clustering, compatible with the equation, in such a way the size of the block does no more depend on the energy.\\
So we have to invent a new way to proceed. First we  consider a phase space $Y_s$ with polynomial decay on the Fourier coefficient (corresponding to Sobolev regularity for u) instead of exponential decay and we use a different norm on the finite matrix, namely the Hilbert-Schmidt norm. This technical changes makes disappear the loss of regularity in the resolution of the homological equation. Nevertheless this is not the end of the story since this Sobolev structure of the phase space $\Tc^{s,\b}$ (see section \ref{2}) is not stable by Poisson bracket and thus is not adapted to an iterative scheme. So the second ingredient consists in using a trick previously used in \cite{GT}: we take advantage of the regularizing effect of the homological equation to obtain a solution in a slightly more regular space $\Tc^{s,\b+}$ and then we verify that $\{\Tc^{s,\b},\Tc^{s,\b+}\}\in \Tc^{s,\b}$ (see section 4) which makes possible an iterative procedure. The last problem is to verify that the non linear term, says $P$, belongs to the class $\Tc^{s,\b}$ which imposes a decreasing conditions on the Hilbert-Schmidt norm of the blocks of the Hessian of $P$. Unfortunately, this condition leads to a restriction to the dimension 2 for the Klein Gordon equation on the sphere and impose to consider only regularizing non linearity in the case of the quantum harmonic operator on $\R^2$.\\
In this paper we only consider PDEs with external parameters (similar to a convolution potential in the case of NLS on the torus). Following \cite{EGK2} we could expect to remove these external parameters (and to use only internal parameters) but the technical cost would be very high.

\medskip

We now state the result that we obtain for the Klein Gordon equation.
Denote by $\Delta$ the Laplace-Beltrami
operator on the sphere $\mathbb S^2$ and let $\Lambda_0=(-\Delta +m)^{1/2}$.   
The spectrum of $\Lambda_0$ equals
$\{\sqrt{j(j+1)+m}\mid\ j\geq 0\}.$
For each $j\geq 1$ let $E_j$ be the associated eigenspace, its dimension is $ 2j+1$. We denote by $\Psi_{j,l}$  the standard  harmonic function of degree $j$ and order $\ell$ so that we have
$$E_j=\Span\{\Psi_{j,l},\  l=-j,\cdots,j\}.$$
We denote
$$\E:=\{(j,\ell)\in\N\times\Z\mid j\geq 0\text{ and }\ell=-j,\cdots,j\}$$
in such a way that $\{\Psi_{a},\, a\in\E\}$ is a basis of $L_2(\S^2, \C)$.\\
We introduce the harmonic multiplier $M_\r$
defined on the basis $(\Psi_a)_{a\in\E}$ of $L^2(\S^2)$ by
\be\label{MKG}
M_\r \Psi_a=\r_a\Psi_a\quad \text{ for } a\in\E
\ee
where $(\r_a)_{a\in\E}$ is a bounded sequence of  nonnegative real numbers.

Let $g$ be a real analytic function on $\mathbb S^2\times \R$  such that $g$ vanishes at
least at order 2 in the second variable at the origin. We consider the following 
 nonlinear Klein-Gordon equation 
\be
\label{KG} 
(\partial_{t}^2-\Delta+m+\de M_\r)u=\eps g(x,u),\quad t\in\R,\ x\in\S^2 
\ee
where $\de>0$ and $\eps>0$ are   small parameters.\\
Introducing $\Lambda=(-\Delta +m+\de M_\r)^{1/2}$ and $v=u_t\equiv\dot u$, \eqref{KG} reads
\ben \left\{\begin{array}{ll}
 \dot u &= - 
 v,\\
 \dot v &=\Lambda^2 u    +\eps g(x,u).
\end{array}\right. 
\een
Defining 
 $
 \psi =\frac 1{\sqrt 2}(\Lambda^{1/2}u  + i\Lambda^{-1/2}v) $
 we get
$$
\frac 1 i \dot \psi =\Lambda \psi+ \frac{1}{\sqrt 2}\Lambda^{-1/2}g\left(x,\Lambda^{-1/2}\left(\frac{\psi+
\bar\psi}{\sqrt 2}\right)\right)\,.
$$
Thus, if we endow the space   $L_2(\S^2, \C)$ with the standard  real symplectic structure given by the two-form
$\ 
-id\psi\wedge d\bar \psi =- du\wedge dv,$
 then equation 
 \eqref{KG} becomes a Hamiltonian system 
$$\dot \psi=i\frac{\partial H}{\partial \bar\psi}$$
with the hamiltonian function
$$
H(\psi,\bar\psi)=\int_{\S^2}(\Lambda \psi)\bar\psi \dd x +\eps\int_{\S^2}G\left(x,\Lambda^{-1/2}\left(\frac{\psi+\bar\psi}{\sqrt 2}\right)\right)\dd x.
$$
where $G$ is a primitive of $g$ with respect to the variable $u$: $g=\partial_u G$.\\
The linear operator $\Lambda$ is diagonal in the  basis  $\{\Psi_{a}, \, a\in\E\}$:
$$
\Lambda \Psi_{a}=\la \Psi_a,\;\;\la= \sqrt{w_a(w_a+1)+m+\de\r_a},
\qquad  \forall\,a\in\E
$$
 where we set
$$w_{(j,\ell)}=j\quad \forall\,(j,\ell)\in\E.$$
Let us decompose $\psi$ and $\bar\psi$  in the   basis $\{\Psi_a, \, a\in\E\}$:
$$
\psi=\sum_{a\in\E}\xi_a \Psi_a,\quad \bar\psi=\sum_{a\in\E}\eta_a \Psi_{a}\,.
$$
On $\P_\C:=\ell^2(\E,\C)\times\ell^2(\E,\C)$ endowed with the  complex 
symplectic structure
${ -}i\sum_s \dd\xi_s\wedge\dd\eta_s$ we consider the Hamiltonian system
\be \label{KG2} \left\{\begin{array}{ll}\dot \xi_a&=i\frac{\partial H}{\partial \eta_a}\\ \dot \eta_a&=-i\frac{\partial H}{\partial \xi_a}\end{array}\right. \quad a\in\E\ee
where the Hamiltonian function $H$ is given by
\be\label{HKG}H=\sum_{a\in\E}\la \xi_a\eta_a+\eps\int_{\S^2}G\left(x,\sum_{a\in\E}\frac{(\xi_a+\eta_a)\Psi_a}{\sqrt{ 2}\ \la^{1/2}}\right)\dd x.\ee
The Klein Gordon equation \eqref{KG} is then  equivalent to the  Hamiltonian system \eqref{KG2} restricted to the real subspace 
$$\P_\R:=\{(\xi,\eta)\in \ell^2(\E,\C)\times\ell^2(\E,\C)\mid \eta_a=\bar\xi_a, \ a\in\E\}.$$
Let $\A\subset \E$ a finite subset of cardinal $n$ satisfying the {\it admissibility} condition 
\be\label{admissible}\A\ni(j_1,\ell_1)\neq(j_2,\ell_2)\in\A \Rightarrow j_1\neq j_2.\ee
We fix $I_a\in[1,2]$ for $a\in\A$, the initial $n$ actions, and we write the modes $\A$ in action-angle variables:
$$\xi_a=\sqrt{I_a+r_a}e^{i\theta_a},\quad \eta_a=\sqrt{I_a+r_a}e^{-i\theta_a}.$$
We define $\L=\E\setminus\A$ 
and, to simplify the presentation, we assume  that
$$\r_{j,l}=\r_j \text{ for } (j,\ell)\in \A\ ;\ m_{j,l}=0 \text{ for } (j,\ell)\in \L.$$
Set
\begin{align*}
w_{j,\ell}&=j\quad \text{ for }(j,\ell)\in\E,\\
\lambda_{j,\ell}&= \sqrt{j(j+1)+m}\text{ for }(j,\ell)\in\L,\\
(\om_0)_{j,\ell}(\r)&=\sqrt{j(j+1)+m+\de\r_j}\text{ for }(j,\ell)\in\A,\\
\zeta&=(\xi_a,\eta_a)_{a\in\L}.
\end{align*}
With this notation $H$ reads (up to a constant)
$$H(r,\theta,\zeta)= \lan\om_0(\r),r\ran+\sum_{a\in\L} \la\xi_a\eta_a+ \eps f(r,\theta,\zeta)$$
where 
$$f(r,\theta,\zeta)=
\int_{\S^2}G\left(x, \hat u(r,\theta,\zeta)(x)\right)dx$$
and 
\be\label{uhat}
\hat u(r,\theta,\zeta)(x)=\sum_{a\in\A}\frac{\sqrt{I_a+r_a}\cos{\theta_a}}{\la^{1/2}}\Psi_a(x)+\sum_{a\in\L}\frac{(\xi_a+\eta_a)}{\sqrt{ 2}\ \la^{1/2}}  \Psi_a(x) .\ee

 Let us set $ u_1(\theta,x) = \hat u(0,\theta;0)(x) $.
 Then  for any $I\in[1,2]^n$ and $\theta_0\in\T^n$ 
 the function $(t,x)\mapsto u_1(\theta_0+t\om,x)$ is a quasi periodic solution of \eqref{KG} with
 $\eps=0$.  Our main theorem 
 states that for most external
 parameter $\r$ this quasi-periodic solution persists (but is  sightly deformed) when we turn on the nonlinearity.
\begin{theorem}\label{thmKG}
For $\eps$ sufficiently small (depending on $n$, $s$ and $g$) and  satisfying \footnote{The coefficient 12 is of course non optimal and we note in remark \ref{rem-m=0} that when $m=0$ 12 can be replace by 4.}
$$\eps\leq \Big(\frac{\de}{4\max({w_a,\ a\in\A})}\Big)^{12}$$
 there exists a Borel subset
$$\D'\subset [1,2]^n, \quad
\meas([1,2]^n\setminus\D')\leq C\eps^{\a},$$ 
such that for  $\rho\in\D'$, there is a function 
$ u(\theta,x)$, analytic in $\theta\in\T^n_{\frac\s 2}$ and smooth in $x\in\S^2$, satisfying 
$$\sup_{|\Im\theta|<\frac\s 2}\|u(\theta,\cdot)-u_1(\theta,\cdot)\|_{H^{s}(\S^2)}
\leq \eps^{1/6},$$
and there is a mapping  $$\om':\D'\to \R^n,\quad
 \|\om'-\om\|_{C^1(\D')}\leq \eps^{1/6},$$
such that for any $\r\in \D'$ the function 
$$
u(t,x)=u(\theta+t\om'(\r),x)
$$
is a   solution of the Klein Gordon equation \eqref{KG}. Furthermore this solution  is linearly stable.\\ 
 The positive 
  constant $\a$  depends only on  $n$ while $C$ also depends on $g$ and $s$.
 \end{theorem}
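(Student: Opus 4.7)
The strategy is to derive the theorem by specializing the abstract KAM theorem of this paper (stated in a later section) to the Hamiltonian
$$H(r,\theta,\zeta)=\lan\om_0(\r),r\ran+\sum_{a\in\L}\la\xi_a\eta_a+\eps f(r,\theta,\zeta),$$
which is already in action--angle normal form plus perturbation. The abstract theorem requires two main inputs: (i) membership $f\in\Tb$ for appropriate parameters $(s,\b,\s,\mu,\D)$, and (ii) first and second Melnikov conditions on the external parameter $\r$.

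For (i) the natural clustering on $\E$ is by the degree index $j$: each block is the eigenspace $E_j$ of cardinality $2j+1$, and the quadratic part $\sum_{a\in\L}\la\xi_a\eta_a$ is already block-diagonal with respect to it. Differentiating $f$ twice in $\zeta$, the $(j,k)$-block of the Hessian has entries
$$\frac{1}{\la^{1/2}\lb^{1/2}}\int_{\S^2}\p_u^2G\bigl(x,\hat u(r,\theta,\zeta)(x)\bigr)\,\Psi_a(x)\Psi_b(x)\,\dd x,\qquad a\in E_j,\ b\in E_k.$$
Real analyticity of $G$ in $x$, combined with uniform bounds on products of spherical harmonics, makes each such integral decay faster than any negative power of $|j-k|$, while the $\la^{-1/2}\lb^{-1/2}$ prefactor contributes an extra gain of order $(jk)^{-1/2}$. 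The key check is that the Hilbert--Schmidt norm of the $(j,k)$-block --- a sum over $(2j+1)(2k+1)$ entries --- obeys the decay demanded in the definition of $\Tb$: since $\sqrt{(2j+1)(2k+1)}\cdot(jk)^{-1/2}$ stays bounded precisely in dimension two, the estimate closes on $\S^2$ and fails on $\S^d$ for $d\geq 3$, which is exactly the geometric obstruction noted in the introduction.

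For (ii) the admissibility condition \eqref{admissible} guarantees that the $n$ degree indices $j$ occurring in $\A$ are pairwise distinct, so the $n$ parameters $\r_j$ act independently and monotonically on the internal frequencies $(\om_0)_a(\r)=\sqrt{j(j+1)+m+\de\r_j}$. The external eigenvalues $\la=\sqrt{j(j+1)+m}$ on $\L$ are independent of $\r$, constant on each block $E_j$, and well separated between blocks of comparable size. A Rüssmann-type sublevel estimate applied to the real-analytic expressions
$$\bigl|\lan k,\om_0(\r)\ran+\alpha\bigr|\geq\gamma\lan k\ran^{-\tau},$$
with $\alpha$ running over eigenvalues of the $(j,k)$-block combinations arising in the second Melnikov condition, then provides a bound $C\gamma^\a$ on the measure of the excluded set. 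Choosing $\gamma\sim\eps^{1/6}$ produces $\D'$ satisfying $\meas([1,2]^n\setminus\D')\leq C\eps^\a$.

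Granted (i) and (ii), the abstract KAM theorem produces, for each $\r\in\D'$, a real-analytic symplectic change of variables $\Phi_\r$ which is $\eps^{1/6}$-close to the identity in the norm of $\Tb$ and conjugates $H$ to a Birkhoff-type normal form with invariant torus $\{r=0,\zeta=0\}$ carrying a quasi-periodic flow of frequency $\om'(\r)$. Setting $u(\theta,x):=\hat u\circ\Phi_\r(0,\theta,0)(x)$ gives the desired solution, with the announced closeness to $u_1$ and the estimate on $\om'-\om$ following directly from the KAM convergence bounds. Linear stability is read off from the block-diagonal structure of the quadratic $\zeta$-part of the normal form and the reality of its eigenvalues. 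I expect the bulk of the technical effort to lie in step (i): putting together the smoothing of $\Lambda^{-1/2}$, the analyticity of $G$ in $x$, and the Hilbert--Schmidt norm on blocks of size $2j+1$ in a way compatible with the bracket inclusion $\{\Tb,\Tbp\}\subset\Tb$ that underlies the iterative scheme of section 4.
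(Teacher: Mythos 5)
Your overall strategy (verify that the perturbation lies in the class $\Tc^{s,\b}$ and that the non-resonance hypotheses hold, then quote the abstract KAM theorem) is the same as the paper's, but your execution of step (i) has a genuine gap. Bounding each Hessian entry by $C(\la\lb)^{-1/2}$ and then summing over the $(2j+1)(2k+1)$ entries of a block only yields $\sup_{a,b}\|M_{[a]}^{[b]}\|_{HS}\leq C$, i.e.\ membership in $\M_0$ and hence $f\in\Tc^{s,0}$. That is not enough: the abstract theorem explicitly requires $\b>0$ (the bracket stability of Lemma \ref{product}/Lemma \ref{lemma-poisson} and the measure estimates in the resolution of the fourth homological equation degenerate at $\b=0$), so with your estimate the scheme does not close on $\S^2$ any more than it does on $\S^3$. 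The paper's Lemma \ref{FOK2} gets the needed gain $\b=1/4$ by exploiting orthogonality inside a block rather than counting entries: one writes $\sum_{d\in[b]}\bigl|\int_{\S^2}\partial_u g(x,\hat u)\Psi_c\Psi_d\,dx\bigr|^2=\bigl\|\Pi_b(\partial_u g(\cdot,\hat u)\Psi_c)\bigr\|_{L^2}^2\leq\|\partial_u g(\cdot,\hat u)\Psi_c\|_{L^2}^2$, and then uses Uns\"old's theorem, $\sum_{c\in[a]}|\Psi_c(x)|^2=\card E_{[a]}/4\pi\leq C\la$ uniformly in $x$, to conclude $\|M_{[a]}^{[b]}\|_{HS}^2\leq C\min(\la,\lb)^{-1}$, hence $\|M_{[a]}^{[b]}\|_{HS}\leq C(\la\lb)^{-1/4}$ and $f\in\Tc^{s,1/4}$. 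Your auxiliary claim that the integrals decay faster than any power of $|j-k|$ is also unjustified (the composed function $\partial_u g(x,\hat u(x))$ is only finitely smooth in $x$, since $\hat u$ has Sobolev regularity), and in any case decay in $|w_a-w_b|$ is not what the norm $|\cdot|_\b$ measures.

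On step (ii) you are closer in spirit, but you skip the two points that actually carry the statement. First, the second Melnikov condition (Hypothesis A3) involves infinitely many pairs $(a,b)$ with $\r$-independent exterior frequencies; the paper reduces it to finitely many blocks through the asymptotics $|\la-\lb-(w_a-w_b)|\leq (m+1)/w_a$ before applying the transversality $\nazz\langle k,\om\rangle\geq \frac{\de}{2\max(w_a,a\in\A)}|k|$, and this reduction is what costs the cube: one ends with $\de_0=\bigl(\frac{\de}{4\max(w_a,\,a\in\A)}\bigr)^3$ in Lemma \ref{AOK2}. Second, the smallness condition in the statement, $\eps\leq\bigl(\frac{\de}{4\max(w_a,\,a\in\A)}\bigr)^{12}$, is exactly the abstract theorem's requirement $\eps\leq\de_0^4$ with this $\de_0$; your choice ``$\gamma\sim\eps^{1/6}$'' never connects the excluded-measure analysis to that hypothesis, so as written your argument does not account for the exponent $12$ (nor for the improvement to $4$ when $m=0$). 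These bookkeeping links are part of what the theorem asserts and must be made explicit.
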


We will deduce Theorem \ref{thmKG} from an abstract KAM result stated in section 2 and proved in section 6. The application to the quantum harmonic oscillator is detailed in section 3.2.\\
In section 4 we study the Hamiltonian flows generated by Hamiltonian functions in $\Tc^{s,\beta}$. In  section 5 we detail the resolution of the homological equation. In both sections 4 and 5 we use   technics and proofs that were developed in \cite{EK10} and \cite{EGK1}. The novelty is the use of Hilbert-Schmidt norm on the matrix and the use of two different class of Hamiltonians: $\Tc^{s,\beta}$ and $\Tc^{s,\beta+}$.  For convenience of the reader we repeat all the arguments.


\section{Setting and abstract KAM theorem.}\label{2}
\noindent {\bf Notations.} In this section we state a KAM result for a Hamiltonian $H=h+\eps f$ of the following form
$$
H= \lan\om(\r), r \ran+\frac 1 2 \langle \zeta,  A(\r)\zeta\rangle +  f(r,\theta,\zeta;\r)$$
where 
\begin{itemize}
\item $\om \in \R^n$ is the frequencies vector corresponding to the internal modes in action-angle variables $(r,\theta)\in\R^n_+\times \T^n$.
\item  $\zeta=(\zeta_s)_{s\in\L}$ are the external modes:    $\L$ is an infinite set of indices, $\zeta_s=(p_s,q_s)\in\R^2$ and $\R^2$ is endowed with the standard symplectic structure $dq\wedge dp$.
\item $A$ is a linear operator acting on the external modes, typically $A$ is diagonal.
\item  $f$ is a perturbative Hamiltonian depending on all the modes and is of order $\eps$ where
 $\eps$ is a small parameter.
\item  $\r$ is an external parameter in $\D$ a compact subset of $\R^p$ with $p\geq n$.
\end{itemize}
We now detail the structure beyond these objects and the hypothesis needed for the KAM result.\\

\noindent {\bf Cluster structure on $\L$.}
Let $\L$ be a set of indices and   $w:\L\to \N$ be an "energy" function\footnote{We could replace the assumption that $w$ takes integer values by $\{w_a-w_b\mid a,b\in\L\}$ accumulates on a discrete set. }
 on $\L$.
We consider the clustering of $\L$ given by 
$\L=\cup_{a\in\L}[a]$ associated to equivalence relation 
$$b\sim a \Longleftrightarrow w_a=w_b.$$
We denote  $\hat\L=\L/\sim$.
We  assume that the cardinal of each energy level is finite and  that there exist $C>0$ and  $d>0$ two constants such that the cardinality of $[a]$ is controlled by $Cw_a^{d}$:
\be\label{block}d_a=d_{[a]}=\card\{b\in\L\mid w_b=w_a\}\leq Cw_a^{d}.\ee

 \smallskip

\noindent {\bf Linear space.}
Let  $s\geq 0$, we consider the complex  weighted $\ell_2$-space
$$
Y_s=\{\zeta=(\zeta_a\in\C^2,\ a\in \L)\mid \|\zeta\|_s<\infty\} 
$$
where\footnote{We provide $\C^2$ with the euclidian norm, $|\zeta_a|=|(p_a,q_a)|=\sqrt{|p_a|^2+|q_a|^2}$.}
$$
\|\zeta\|_s^2=\sum_{a\in\L}|\zeta_a|^2 w_a^{2s}.$$
We also introduce for\footnote{The constraint $\b\leq1$ is technically convenient but not necessary.} $1\geq\b\geq 0$ the complex  weighted $\ell_\infty$-space
$$
L_\b=\{\zeta=(\zeta_a\in\C^2,\ a\in \L)\mid |\zeta|_\b<\infty\} $$
where
$$
|\zeta|_\b=\sup_{a\in\L}|\zeta_{[a]}| w_a^{\b},\quad |\zeta_{[a]}|^2=\sum_{b\in[a]}|\zeta_b|^2.$$
We note that if $s\geq \b$ then $Y_{s}\subset  L_\b$.

In the spaces $Y_s$ acts the linear operator $J$,
$$
J\ :\ \{\zeta_a\}\mapsto \{\s_2\zeta_a\}, \quad \text{with } \s_2=\left(\begin{array}{cc} 0&-1\\1&0\end{array}\right).$$
 It provides the spaces $Y_s$, $s\geq 0$, with the symplectic structure $J\dd\zeta\wedge\dd \zeta$. To any $C^1$-smooth function defined on a domain $\O\subset Y_s$,  corresponds the Hamiltonian equation 
 $$
 \dot \zeta =J\nabla f(\zeta),$$
 where $\nabla f$ is the gradient with respect to the scalar product in $Y$.
 
 \smallskip
 
  \noindent {\bf Infinite matrices.}
 We denote by $\M$ the set of infinite matrix $A:\L\times \L\to \M_{2\times 2}$ with value in the space of real $2\times 2$ matrices that are symmetric
$$
A_s^{s'}=A_{s'}^s,\quad \forall s,\ s'\in \L$$
and satisfy
$$
|A| := \sup_{a,b\in \L}\left\|A_{[a]}^{[b]}\right\|_{HS}<\infty$$
where $A_{[a]}^{[b]}$ denotes the restriction of $A$ to the block $[a]\times[b]$ and $\|\cdot\|_{HS}$ denotes the Hilbert Schmidt norm:
$$\|M\|_{HS}^2:=\sum_{j,\ell}|M_{j\ell}|^2.$$
The for $\beta\geq 0$ we define $\M_\beta$ the subset of $\M$ such that
$$
|A|_\beta := \sup_{a,b\in \L}w_a^{\beta}w_b^{\beta}\|A_{[a]}^{[b]}\|_{HS}<\infty .$$

\noindent {\bf A class of Hamiltonian functions.}
Let us fix any $n\in\N$. 
On the space
$$
\C^n\times \C^n \times Y_s$$
we define the norm
$$\|(z,r,\zeta)\|_s=\max( |z|, |r|, \|\zeta\|_s).$$
For $\sigma>0$ we denote
$$
\T^n_\sigma=\{z\in\C^n: | \Im z|<\sigma\}/2\pi\Z^n.
$$
%
For $\sigma,\mu\in(0,1]$ and $s\ge0$ we set
$$
\O^s(\s,\mu)=\T^n_\s\times \{r\in\C^n: |r|<\mu^2\}\times \{\zeta\in Y_s: \|\zeta\|_s<\mu\}
$$
We will denote points in $\O^s(\s,\mu)$ as $x=(\theta,r,\zeta)$.
A
 function defined on a domain $\O^s(\s,\mu)$, is called {\it real} if it gives real values to real
arguments.\\
Let 
$$
\D=\{\rho\}\subset \R^p 
$$ 
be a compact set of positive Lebesgue  measure. This is the set
of parameters upon which will depend our objects. Differentiability of functions on $\D$
is understood in the sense of Whitney. So $f\in C^1(\D)$ if it may be extended to a $C^1$-smooth
function $\tilde f$ on $ \R^p$, and $|f|_{ C^1(\D)}$ is the infimum of  $|\tilde f|_{ C^1(\R^p)}$,
taken over all $C^1$-extensions $\tilde f$ of $f$. \\
If $(z,r,\zeta)$ are $C^1$ functions on $\D$, then we define
$$\|(z,r,\zeta)\|_{s,\D}=\max_{j=0,1}( |\partial^j_\r  z|, |\partial^j_\r  r|, \|\partial^j_\r \zeta\|_s).$$
Let 
 $f:\O^0(\s,\mu)\times\D\to \C$ be a $\C^1$-function, 
 real holomorphic in the first variable $x$,
  such that for all $\rho\in\D$
 $$
 \O^{s}(\s,\mu)\ni x\mapsto \nabla_\zeta f(x,\rho)\in Y_{s}\cap L_\b$$
and
 $$
 \O^{s}(\s,\mu)\ni x\mapsto\nabla^2_{\zeta} f(x,\rho)\in \M_\beta$$
are real holomorphic functions. We denote this set of functions by 
  $\Tc^{s,\beta}(\s,\mu,\D)$. \\
 For a function $f\in \Tc^{s,\beta}(\s,\mu,\D)$ we define
 the norm 
 $$[f]^{s,\beta}_{\s,\mu,\D}$$
 through 
$$\sup
\max( |\partial^j_\r f(x,\r)|,\mu \|\partial^j_\r \nabla_\zeta f(x,\r)\|_{s},\mu |\partial^j_\r \nabla_\zeta f(x,\r)|_{\b},
\mu^2|\partial^j_\r \nabla^2_\zeta f(x,\r)|_{\beta}),
$$
where the supremum is taken over all
$$
j=0,1,\ x\in O^{\ga}(\s,\mu),\ \rho\in\D.$$
We set $\Tc^{s}(\s,\mu,\D)=\Tc^{s,0}(\s,\mu,\D)$ and $[h]^{s}_{\s,\mu,\D}=[h]^{s,0}_{\s,\mu,\D}$.

\medskip

\noindent{\bf Normal form:} 
We introduce the orthogonal projection $\Pi$ defined on the $2\times 2$ complex matrices 
$$\Pi: \M_{2\times 2}(\C)\to \C I+\C J$$
where 
$$I=\left(\begin{array}{cc} 1&0\\0&1\end{array}\right) \quad \text{and}\quad J=\left(\begin{array}{cc} 0&-1\\1&0\end{array}\right).$$
\begin{definition}
 A matrix $A:\ \L\times \L\to \M_{2\times 2}(\C)$ is on normal form  and we denote $A\in  \NF$ if
 \begin{itemize}
 \item[(i)] $A$ is real valued,
 \item[(ii)] $A$ is symmetric, i.e. $A_b^a={}^t\hspace{-0,1cm}A_a^b$,
 \item[(iii)] $A$ satisfies $\Pi A=A$,
 \item[(iii)] $A$ is block diagonal, i.e. $A_b^a=0$ for all $w_a\neq w_b$.
 \end{itemize}
 \end{definition}
To a real symmetric matrix $A=(A_a^b)\in \M$  we associate in a unique way a real quadratic form on $Y_s\ni (\zeta_a)_{a\in \L}=(p_a,q_a)_{a\in\L}$ 
$$q(\zeta)=\frac 1 2 \sum_{a,b\in\L}\langle \zeta_a,\ A_a^b\zeta_b\rangle.$$
In the complex variables, $z_a=(\xi_a,\eta_a),\ a\in \L$, where
$$
\xi_a=\frac 1 {\sqrt 2} (p_a+iq_a),\quad \eta_a =\frac 1 {\sqrt 2} (p_a-iq_a),$$
we have
$$
q(\zeta)=\frac 1 2\langle \xi,\nabla_\xi^2 q\ \xi\rangle+ \frac 1 2\langle \eta,\nabla_\eta^2 q\ \eta\rangle+\langle \xi,\nabla_{\xi}\nabla_{\eta} q\ \eta\rangle    .$$
The matrices $\nabla_\xi^2 q$ and $\nabla_\eta^2 q$ are symmetric and complex conjugate of each other while $\nabla_{\xi}\nabla_{\eta} q$ is Hermitian. If $A\in \M_\beta$ then
\be\label{q}
\sup_{a,b}\big\|(\nabla_\xi\nabla_\eta q)_{[a]}^{[b]}\big\|_{HS} \leq\frac{|A|_\b}{(w_aw_b)^{\beta}}.\ee
We note that if $A$ is on normal form, then the associated  quadratical form $q(\zeta)=\frac 1 2 \langle \zeta,A\zeta\rangle$ reads in complex variables
\be\label{Q} q(\zeta)=\langle \xi,Q \eta\rangle \ee
where $Q:\L\times\L\to \C$ is 
\begin{itemize}
\item[(i)] Hermitian, i.e. $Q_b^a=\overline{Q_a^b}$,
\item[(ii)] Block-diagonal. 
\end{itemize}
In other words, when $A$ is on normal form, the associated quadatic form reads
$$q(\zeta)=\frac 1 2 \langle p,A_1 p\rangle+ \langle p,A_2 q\rangle+\frac 1 2 \langle p,A_1 q\rangle$$
with $Q=A_1+iA_2$ Hermitian.\\
 By extension we will say that a Hamiltonian is on normal form if it reads
\be\label{h}
h=\lan\om(\r), r \ran+\frac 1 2\langle \zeta, A(\r)\zeta\rangle\ee
with $\om(\r)\in\R^n$ a frequency vector and $A(\r)$ on normal form for all $\r$.

\smallskip

\subsection{Hypothesis on the spectrum of $A_0$.} We assume that  $A_0(\r)$ a real diagonal matrix whose diagonal elements $\la(\r)>0, \ a\in\L$ are $C^1$. Our hypothesis depend on two constants $1>\delta_0>0$ and $c_0>0$ fixed once for all.

\smallskip

\noindent {\bf Hypothesis A1 -- Asymptotics.}
We assume that there exist $\ga\geq1$   such that
\be\label{laequiv}
\la(\r )\geq c_0\, w_a^{\ga} \quad \mbox{  for  } \r\in\D \text{ and } a\in\L
\ee
and
\be\label{la-lb}
|\la(\r )-\lb(\r)|\geq {c_0}{|w_a-w_b|} \quad \text{for }a,b\in\L \mbox{ and  for  } \r\in\D
\ee

\smallskip

\noindent
{\bf Hypothesis A2 -- non resonances.}
There exists a $\delta_0>0$ such that for all $\Ca^1$-functions
$$\omega:\D_0\to \R^n,\quad |\omega-\omega_0|_{\Ca^1(\D_0)}<\delta_0,$$
the following hold for each  $k\in\Z^n\setminus 0$:
\begin{itemize}
\item[(i)]
either $$ |  \langle k,\om(\r)\rangle|\geq \delta_0$$ for all  $\r\in\D_0$, 
or there exits a unit vector ${\mathfrak z}\in\R^p$  such that 
$$ \nazz  (\langle k,\omega\rangle)  \geq \delta_0$$ for all  $\r\in\D_0$;

\item[(ii)]
either $$ | \langle k,\om(\r)\rangle+\la(\r)|\geq \delta_0 w_a$$ for all  $\r\in\D_0$ and $a\in\L$ 
or there exits a unit vector ${\mathfrak z}\in\R^p$  such that 
$$ 
\nazz\big(  \langle k,\om(\r)\rangle+\la(\r) \big) \geq \delta_0
$$ 
for all  $\r\in\D_0$ and $a\in\L$;

\item[(iii)]
either $$ |  \langle k,\om(\r)\rangle+\la(\r)+\lb(\r)|\geq \delta_0(w_a+w_b)$$ for all  $\r\in\D_0$ and 
$a,\ b\in\L$  or there exits a unit vector ${\mathfrak z}\in\R^p$  such that 
$$
  \nazz\big( \langle k,\om(\r)\rangle+\la(\r)+\lb(\r) \big) \geq \delta_0
  $$ for all  $\r\in\D_0$ and $a,\ b\in\L$;

\item[(iv)]
either 
$$
 |  \langle k,\om(\r)\rangle+\la(\r)-\lb(\r)|\geq \delta_0 (1+|w_a-w_b|)
$$
 for all  $\r\in\D_0$ and $a,\ b\in\L$ 
or there exits a unit vector ${\mathfrak z}\in\R^p$  such that 
$$ \nazz \big( \langle k,\om(\r)\rangle+\la(\r)-\lb(\r)\big)  \geq \delta_0$$ for all  $\r\in\D_0$ and $a,\ b\in\L$
\end{itemize}

The assumption (iv) above will be used to bound from below divisors $|  \langle k,\om(\r)\rangle+\la(\r)-\lb(\r)|$
with $w_a,\, w_b\sim1$. To control the (infinitely many) divisors with $\max(w_a,w_b)\gg1$ we need another assumption:
\medskip

\noindent
{\bf Hypothesis A3 -- second Melnikov condition in measure.} There exist absolute constant $\a_1>0$, $\a_2>0$ and $C>0$ such that
for all $\Ca^1$-functions
$$\omega:\D\to \R^n,\quad |\omega-\omega_0|_{\Ca^1(\D)}<\delta_0,$$
the following holds:\\
for each $\ka>0$ and $N\ge1$ there exists a  closed subset $\D'=\D'(\om_0,\kappa,N)\subset \D$  satisfying 
\be\label{mesomega2}
\meas(\D\setminus {\D'})\leq CN^{\a_1}  (\frac{\ka}{ \delta_0} )^{\a_2}\quad (\a_1,\a_2\ge 0)  \ee
 such that for all $\r\in{\D'}$,  all $0<|k|\leq N$ and  all $a,b\in\L$ 
 we have
 \be\label{D33}
| \langle k,\om(\r)\rangle +\la(\r)-\lb(\r)|\geq \ka(1+|w_a-w_b|).
\ee

\subsection{The abstract KAM Theorem.}
We are now in position to state our abstract KAM result.
\begin{theorem}\label{main} Assume that 
\be \label{h0}h_0=\lan\om_0(\r), r\ran +\frac 1 2 \langle \zeta,  A_0(\r)\zeta\rangle\ee  with the spectrum of $A_0$ satisfying Hypothesis A1, A2, A3 and let $f\in\Tc^{s,\beta}(\D,\s,\mu)$ with $\b>0$, $s>0$. 
There exists $\eps_0$ (depending on $n,d,s,\b,\s,\mu$ and on $h_0$\footnote{Dependence on $h_0$ means: dependence on $\A$, on $\Cc^1$-norm of $\r\mapsto \om_0(\r)$ and  $\r\mapsto A_0(\r)$.}) such that if 
$$[f]^{s,\beta}_{\s,\mu,\D}=\eps<\min(\eps_0 , \delta_0^4)$$
there is a $\D'\subset \D$ with $\text{meas}(\D\setminus \D')\leq \eps^\a$ such that for all $\r\in \D'$ the following hold: There are a real analytic symplectic diffeomorphism
$$\Phi:\O^s(\s/2,\mu/2)\to \O^s(\s,\mu)$$
and a vector $\om=\om(\r)$ such that 
$$
(h_{0}+  f)\circ \Phi= \lan\om(\r), r\ran +\frac 1 2 \langle \zeta,  A(\r)\zeta\rangle + \tilde f(r,\theta,\zeta;\r)$$
where $\partial_\zeta \tilde f=\partial_r \tilde f=\partial^2_{\zeta\zeta}\tilde f=0$ for $\zeta=r=0$ and $ A:\L\times\L \to \M_{2\times 2}(\R)$  is on normal form, i.e. $\tilde A$ is real symmetric and block diagonal: $ A_{ab}=0$ for all $w_a\neq w_b$.\\
Moreover $\Phi$ satisfies
$$
\|\Phi-Id\|_s\leq  \eps^{1/6}$$
for all $(r,\theta,\zeta)\in \O^s(\s/2,\mu/2)$, and
\begin{align*} &\left|  A(\r)-A_0(\r)\right|_\b\leq  \eps^{1/6}, \\
&| \om(\r)-\om_0(\r)|\leq  \eps^{1/6}
\end{align*}
 for all $\r\in \D'$.\\
 The constant $\a$  only depends on $n$, $d$, $s$, $\b$, $\a_1$, $\a_2$.  
\end{theorem}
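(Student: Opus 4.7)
The proof proceeds by a standard super-convergent KAM iteration. Setting $H_0=h_0+f$, $\eps_0=\eps$, $\s_0=\s$, $\mu_0=\mu$, $\D_0=\D$, we inductively produce at step $j$ a Hamiltonian $H_j=h_j+f_j$ with $h_j=\lan\om_j(\r),r\ran+\frac 12\lan\zeta,A_j(\r)\zeta\ran$ on normal form and $f_j\in\Tc^{s,\b}(\s_j,\mu_j,\D_j)$ of size $[f_j]^{s,\b}_{\s_j,\mu_j,\D_j}\leq\eps_j$. We seek a Hamiltonian $S_j$ whose time-one flow $\Phi_j=\exp(X_{S_j})$ conjugates $H_j$ into $H_{j+1}=h_{j+1}+f_{j+1}$ with $\eps_{j+1}\lsim\eps_j^{1+\nu}$ for some fixed $\nu>0$, while $\s_j$ and $\mu_j$ decrease geometrically to $\s/2$ and $\mu/2$ and $\D_{j+1}\subset\D_j$ is obtained by excising a small-measure exceptional set. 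Upon convergence $\Phi=\Phi_0\circ\Phi_1\circ\cdots$ will be the sought symplectomorphism and $\D'=\bigcap_j\D_j$.

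The heart of each step is the homological equation. Split $f_j=T_j+R_j$ where $T_j$ gathers the Taylor terms at $(r,\zeta)=(0,0)$ that must be normalized, namely the $\theta$-dependent constant, the $r$-linear and $\zeta$-linear parts, and the $\zeta$-quadratic part. We must find $S_j$ and a normal-form correction $\hat T_j$ solving $\{h_j,S_j\}+T_j=\hat T_j$. Fourier-expanding in $\theta$ and reading off block coefficients indexed by $\L$, this reduces to dividing by the small denominators $\lan k,\om_j\ran$, $\lan k,\om_j\ran\pm\la^{(j)}$ and $\lan k,\om_j\ran\pm\la^{(j)}\pm\lb^{(j)}$. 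Hypothesis A2 handles the finitely many divisors with $|k|\le N_j$ through its $\mathcal{C}^1$-transversality direction $\mathfrak z$; Hypothesis A3 provides the crucial second-Melnikov estimate, with the factor $(1+|w_a-w_b|)$ on the right, for the infinitely many block-off-diagonal divisors $\lan k,\om_j\ran+\la^{(j)}-\lb^{(j)}$. Feeding these lower bounds into the resolution machinery of Section 5 yields $S_j\in\Tc^{s,\b+}(\s_j',\mu_j,\D_{j+1})$ with a bound of the form $[S_j]^{s,\b+}_{\s_j',\mu_j,\D_{j+1}}\lsim \eps_j/(\ka_j(\s_j-\s_j')^c)$; the gain from $\b$ to $\b+$ is precisely the dividend of the factor $(1+|w_a-w_b|)$ in the denominator, and it is what makes the scheme non-singular. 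Simultaneously $\hat T_j$ induces the updates $\om_{j+1}=\om_j+\delta\om_j$ and $A_{j+1}=A_j+\delta A_j$ with $|\delta\om_j|,|\delta A_j|_\b\lsim\eps_j$, and $\delta A_j\in\NF$ so the normal form is preserved.

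With $S_j$ in hand, the flow results of Section 4 show that $\Phi_j$ is a well-defined real-analytic symplectomorphism on a slightly shrunk domain and, thanks to the key inclusion $\{\Tc^{s,\b},\Tc^{s,\b+}\}\subset\Tc^{s,\b}$, that $H_j\circ\Phi_j$ stays in the right class, with new perturbation $f_{j+1}$ combining the Taylor remainder $R_j\circ\Phi_j$ and second-order Lie terms involving $\{f_j,S_j\}$ and $\{\hat T_j-T_j,S_j\}$. Rescaling $\mu_{j+1}=\mu_j/2$ absorbs one power of $\mu_j$ from $R_j$, while the Lie terms are quadratic in $\eps_j$ up to loss factors in $\ka_j$ and $\s_j-\s_j'$; choosing $\ka_j\sim\eps_j^{c'}$ and the geometric strip losses appropriately yields $\eps_{j+1}\lsim \eps_j^{4/3}$, hence super-convergence. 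Telescoping the bounds gives $\|\Phi-\mathrm{Id}\|_s,\ |A-A_0|_\b,\ |\om-\om_0|\lsim\eps^{1/6}$, and the total excised measure, controlled by $\sum_j N_j^{\a_1}(\ka_j/\delta_0)^{\a_2}$ via A3, is $\lsim\eps^\a$ for a suitable $\a=\a(n,d,s,\b,\a_1,\a_2)$. The principal obstacle is bookkeeping rather than any single estimate: one must check at every step that A1, A2, A3 persist for $\om_{j+1}$ and $\la^{(j+1)}$, using that A2's transversality in direction $\mathfrak z$ is stable under $\mathcal{C}^1$-small perturbations and that the Hilbert-Schmidt bound on $|A_{j+1}-A_j|_\b$, inherited from $S_j\in\Tc^{s,\b+}$, is summable in $j$ and thus keeps the eigenvalue drift $|\la^{(j+1)}-\la^{(j)}|\lsim|A_{j+1}-A_j|_\b w_a^{-2\b}$ uniformly under control, this being exactly what allows A3 at step $j$ to be upgraded to A3 at step $j+1$ with a controlled loss in $\ka_j$.
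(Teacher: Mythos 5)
Your overall skeleton (Newton-type iteration on jets, homological equation solved with a gain from $\Tc^{s,\b}$ to $\Tc^{s,\b+}$ via the factor $(1+|w_a-w_b|)$, stability $\{\Tc^{s,\b+},\Tc^{s,\b}\}\subset\Tc^{s,\b}$, measure excision through A2/A3) is the same as the paper's, but your parameter bookkeeping contains a step that fails as written. You require the \emph{full} norm $[f_j]^{s,\b}_{\s_j,\mu_j,\D_j}\leq\eps_j$ to decay superlinearly ($\eps_{j+1}\lsim\eps_j^{4/3}$) while letting $\s_j,\mu_j$ decrease only geometrically to $\s/2,\mu/2$. But the new perturbation \eqref{f+} contains the Taylor-remainder term $(f_j-f_j^T)\circ\Phi^1_{S_j}$, and by Proposition \ref{lemma:jet} the only gain available on this term is the factor $(\mu_{j+1}/\mu_j)^3$; with $\mu_{j+1}=\mu_j/2$ this is the constant $1/8$, so this contribution to $\eps_{j+1}$ is of order $\eps_j$, not $\eps_j^{4/3}$, and the super-convergence you invoke (and need, e.g. to sum $\eps_j^{1/6}$ and to control $N_j$, $\ka_j$) does not follow. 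This is exactly why the paper takes $\mu_j=\eps_{j-1}^{2/5}\mu_0\to0$, so that the second term in \eqref{eps+} is $(\eps_j/\eps_{j-1})^{6/5}\eps_j$ and the scheme closes with $\eps_j\leq\eps^{(7/6)^j}$.

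Once you shrink $\mu_j\to0$, a second ingredient you omit becomes necessary: the claimed domain $\O^s(\s/2,\mu/2)$ for the limit map $\Phi$ is no longer the intersection of the iteration domains (which degenerate in the $r,\zeta$ directions). The paper recovers it because each $S_j$ is a jet, so its flow is affine in $r$ and affine-quadratic in $\zeta$ and extends analytically to the fixed sets $Q_2,Q_3$ (Lemma \ref{changevar}, part 2); the compositions then form a Cauchy sequence on a domain of fixed size, giving $\|\Phi-\mathrm{Id}\|_s\le\eps^{1/6}$ there, and the vanishing of the jet of $\tilde f$ is read off in the limit. Without this extension argument (or, alternatively, a P\"oschel-type scheme in which only the jet norm is driven to zero superexponentially while the full perturbation stays merely bounded), your construction produces $\Phi$ only on a degenerate domain. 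A minor further remark: your final paragraph about re-verifying A1--A3 for $\om_{j+1},\la^{(j+1)}$ is not needed in the paper's formulation, since A2 and A3 are assumed uniformly for all $\om$ with $|\om-\om_0|_{\Ca^1}\le\delta_0$ and the homological-equation propositions only require the drift condition \eqref{Aom}, which is checked by summing the $\eps_k$ against $\delta_0$ (this is where $\eps<\delta_0^4$ enters); organizing the proof your way is possible but heavier, and by itself would not repair the convergence issue above.
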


This  normal form result has dynamical consequences.  For $\r\in\D'$, the torus $\{0\}\times\T^n\times\{0\}$ is invariant by the flow of $(h_0+f)\circ \Phi$ and the dynamics of the Hamiltonian vector field of $h_0+f$ on the $\Phi(\{0\}\times\T^n\times\{0\})$ is the same as that of $$\lan\om(\r), r\ran +\frac 1 2 \langle \zeta,  A(\r)\zeta\rangle .$$
The Hamiltonian vector field on the torus $\{\zeta=r=0\}$ is
$$
\left\{\begin{array}{l}
\dot\zeta=0\\
\dot \theta=\om\\
\dot r=0,
\end{array}\right.
$$
and  the flow on the torus is linear: $t\mapsto \theta(t)=\theta_0+t\om$.\\
Moreover, the linearized equation on this torus reads 
$$
\left\{\begin{array}{l}
\dot\zeta=JA\zeta+J\partial^2_{r\zeta}f(0,\theta_0+\om t,0)\cdot r\\
\dot\theta=\partial^2_{r\zeta}f(0,\theta_0+\om t,0)\cdot \zeta+\partial^2_{rr}f(0,\theta_0+\om t,0)\cdot r\\
\dot r=0.
\end{array}\right.
$$
Since $A$ is on normal form (and in particular real symmetric and block diagonal) the eigenvalues of the $\zeta$-linear part are purely imaginary: $\pm i \tilde\lambda_a,\ a\in\L$. Therefore the invariant torus is linearly stable in the classical sense (all the eigenvalues of the linearized system are purely imaginary). 

\section{Applications}
\subsection{The Klein Gordon equation on $\mathbb S^2$}
 In this section we prove Theorem \ref{thmKG} as a corollary of Theorem \ref{main}. We recall some notations introduced in the introduction. 
 We denote
$$\E:=\{(j,\ell)\in\N\times\Z\mid j\geq 0\text{ and }\ell=-j,\cdots,j\}$$ and we set
\begin{align*}
w_{j,\ell}&=j\quad \text{ for }(j,\ell)\in\E,\\
\lambda_{j,\ell}&= \sqrt{j(j+1)+m}\text{ for }(j,\ell)\in\L,\\
(\om_0)_{j,\ell}(\r)&=\sqrt{j(j+1)+m+\de\r_j}\text{ for }(j,\ell)\in\A,\\
\zeta&=(\xi_a,\eta_a)_{a\in\L}.
\end{align*}
With this notation the Klein Gordon Hamiltonian $H$ reads (up to a constant)
$$H(r,\theta,\zeta)= \lan\om_0(\r),r\ran+\sum_{a\in\L} \la\xi_a\eta_a+ \eps f(r,\theta,\zeta)$$
where 
$$f(r,\theta,\zeta)=
\int_{\S^2}G\left(x, \hat u(r,\theta,\zeta)(x)\right)dx .$$

\begin{lemma}\label{AOK2}
Hypothesis A1, A2 and A3 hold true with $\D=[1,2]^n$ and  
\be\label{d0}\de_0=\Big(\frac{\de}{4\max({w_a,\ a\in\A})}\Big)^3.\ee \end{lemma}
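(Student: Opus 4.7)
The plan is to verify Hypotheses A1, A2 and A3 in turn, exploiting the explicit formulas $\lambda_a = \sqrt{w_a(w_a+1)+m}$ (which, for $a\in\L$, is independent of $\rho$ by the convention $\rho_a = 0$ on $\L$) and $(\omega_0)_i(\rho) = \sqrt{j_i(j_i+1)+m+\delta\rho_{j_i}}$, where $\A = \{a_1,\dots,a_n\}$ with $a_i = (j_i,\ell_i)$. For A1 I would take $\ga = 1$, deduce $\lambda_a \geq c_0 w_a$ directly from $\sqrt{j(j+1)+m}\geq j$, and derive the Lipschitz bound from the algebraic identity $\lambda_a-\lambda_b = (w_a-w_b)(w_a+w_b+1)/(\lambda_a+\lambda_b)$ together with $\lambda_a+\lambda_b \leq w_a+w_b+2(1+\sqrt m)$, which yields $|\lambda_a-\lambda_b|\geq c_0|w_a-w_b|$ for some $c_0 = c_0(m)>0$.

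For A2 the key derivative is
$$\partial_{\rho_{j_i}}(\omega_0)_i(\rho) = \frac{\delta}{2(\omega_0)_i(\rho)} \geq \frac{\delta}{4W}, \qquad W := \max_{a\in\A} w_a,$$
valid on $\rho\in[1,2]^n$ for $\delta$ small; the remaining entries of the Jacobian of $\omega_0$ vanish since, by admissibility \eqref{admissible}, each internal frequency depends on a single parameter. Given $k\in\Z^n\setminus 0$, pick $i$ realizing $|k_i|=|k|_\infty\geq 1$ and set $\mathfrak z = \mathrm{sign}(k_i)\,e_{j_i}$; then $\nabla_{\mathfrak z}\langle k,\omega_0\rangle = |k_i|\,\delta/(2(\omega_0)_i) \geq |k|_\infty\,\delta/(4W) \geq \delta/(4W)$. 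Any $C^1$ perturbation $\omega$ with $|\omega-\omega_0|_{C^1}<\delta_0$ alters this gradient by at most $n|k|_\infty\delta_0$, and since $\delta_0 = (\delta/(4W))^3$ is negligible compared to $\delta/(4W)$, the perturbed gradient still exceeds $\delta_0$. For A2(ii)--(iv), the extra terms $\pm\lambda_a\pm\lambda_b$ contribute no $\rho$-derivative (as $a,b\in\L$), so the same $\mathfrak z$ and the same lower bound work; alternative (ii) is therefore always satisfied.

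For A3 I would combine this transversality with a Fubini slicing argument. For fixed $0<|k|\leq N$ and $a,b\in\L$, the set $\{\rho\in[1,2]^n : |\langle k,\omega(\rho)\rangle+\lambda_a-\lambda_b|<\kappa(1+|w_a-w_b|)\}$ has Lebesgue measure at most $2\kappa(1+|w_a-w_b|)/\delta_0$. To keep the sum over $(a,b)$ finite, I would truncate using A1: since $|\langle k,\omega\rangle|\leq CNW$ and $|\lambda_a-\lambda_b|\geq c_0|w_a-w_b|$, once $|w_a-w_b|\geq 4CNW/c_0$ the desired inequality is automatic (provided $\kappa\leq c_0/4$). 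For the remaining pairs one has $w_a,w_b\lesssim NW$, and the cluster bound \eqref{block} controls their number by a polynomial in $N$ and $W$. Summing over $|k|\leq N$ then gives $\meas(\D\setminus \D')\leq CN^{\alpha_1}\kappa/\delta_0$, establishing A3 with $\alpha_2=1$.

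The main obstacle is the combinatorics in A3: one must carefully count the pairs $(a,b)\in\L^2$ with $w_a, w_b$ in the truncated range using \eqref{block}, and verify that the resulting polynomial in $N$ and $W$ can be absorbed into $\alpha_1$ without destroying the linear dependence on $\kappa/\delta_0$. The transversality itself is a short computation; the actual work lies in the bookkeeping for A3.
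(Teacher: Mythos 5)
Your verifications of A1 and A2 are fine and essentially the paper's (the paper takes $\mathfrak z = k/|k|$ instead of a coordinate direction, but both give the transversality $\gtrsim \delta/(4W)$, which dominates $\delta_0$). The gap is in A3. Your truncation step asserts that once pairs with $|w_a-w_b|\geq 4CNW/c_0$ are discarded, ``for the remaining pairs one has $w_a,w_b\lesssim NW$''. This is false: bounding the \emph{difference} $|w_a-w_b|$ does not bound $w_a$ and $w_b$ individually. For instance all pairs with $w_b=w_a+1$ and $w_a\to\infty$ survive the truncation, the divisors $\langle k,\om\rangle+\lambda_a-\lambda_b$ for these pairs take infinitely many distinct values accumulating at $\langle k,\om\rangle - 1$, and each such pair would require excising a set of measure $\sim\kappa/\delta_*>0$. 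Since condition \eqref{D33} must hold for \emph{all} $a,b\in\L$, your Fubini-plus-counting argument sums over infinitely many pairs and cannot close; the cluster bound \eqref{block} does not help because the relevant indices are unbounded.

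The missing idea, which is the heart of the paper's proof, is to exploit the asymptotics of the Klein--Gordon frequencies: $|\sqrt{j(j+1)+m}-(j+\tfrac14)|\leq \frac{m+1}{2j}$, hence $|\lambda_a-\lambda_b-(w_a-w_b)|\leq \frac{m+1}{\min(w_a,w_b)}$. Thus for $\min(w_a,w_b)\geq 2/\nu$ the divisor is within $\nu$ of $\langle k,\om\rangle+e$ with an \emph{integer} $e$, $|e|\leq C|k|$, so removing the finitely many sets $G_\nu(k,e)=\{\r:|\langle k,\om\rangle+e|\leq 2\nu\}$, $0<|k|\leq N$, $e\in\Z$, handles all infinitely many such pairs simultaneously (total measure $\lesssim N^{n+1}\nu/\delta_*$). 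Only pairs with $\min(w_a,w_b)\leq 2/\nu$ — and then, when the divisor is $\leq 1$, also $\max(w_a,w_b)\leq 2/\nu+CN$ — are finitely many and treated one by one via the transversality. Balancing the two contributions forces a choice like $\nu=\ka^{1/3}$, which is exactly why the lemma takes $\de_0=\de_*^3$ and why the measure estimate comes out with exponent $\a_2=1/3$ rather than your claimed $\a_2=1$; the factor $(1+|w_a-w_b|)$ in \eqref{D33} is then recovered at the end by distinguishing $|\lambda_a-\lambda_b|\geq 2|\langle k,\om\rangle|$ from the opposite case and replacing $\ka$ by $\tilde\ka=\ka/(1+CN)$. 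Without this reduction of the large-index pairs to integer translates, the proof of A3 does not go through.
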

\proof
Hypothesis A1 is clearly satisfied with $c_0=1/2$ and $\ga=1$.\\
On the other hand choosing $z\equiv z_k=\frac k{|k|}$ we have
\be\label{ouf} \nazz  (\langle k,\omega\rangle) \geq \frac{\de}{2\max({w_a,\ a\in\A})} |k|\quad \text{ for all }k\neq 0\ee
while 
\be\label{ouf2} \nazz  \la =0\quad \text{ for all } a\in\L .\ee
Then for all $k\neq 0$ the second part of the alternatives (i)--(iv) in Hypothesis A2 are satisfied choosing $$\de_0\leq \de_* :=\frac{\de}{4\max({w_a,\ a\in\A})}.$$
It remains to verify A3. Without loss of generality we can assume $w_a\leq w_b.$\\
First  denoting
$$F_\ka(k,a,b):=\{\r\in\D\mid |\lan \om,k\ran +\la-\lb|\leq \ka\},$$
we have using \eqref{ouf} that
$$\meas F_\ka(k,a,b)\leq C \frac{\ka}{\delta_*}.$$
On the other hand, defining
$$G_\nu(k,e):=\{\r\in\D\mid |\lan \om,k\ran+e|\leq 2\nu\},$$
we have, using again \eqref{ouf} that
$$\meas G_\nu(k,e)\leq C \frac{\nu}{\delta_*}.$$
Further $|\lan \om,k\ran+e|\leq 1$ can occur only if $|e|\leq C|k|$ and thus
$$G_\nu=\bigcup_{\substack{{0<|k|\leq N}\\{e\in\Z}}} G_\nu(k,e)$$
has a Lebesgue measure less than $CN^{n+1} \frac{\nu}{\delta_*}.$\\
Now we remark that 
$$|j+\frac 14-\sqrt{j(j+1)+m}|\leq \frac {m+1}{2j}$$
from which we deduce
$$|\la-\lb -(w_a-w_b)|\leq \frac {m+1}{w_a}.$$
Therefore for $\r\in \D\setminus G_\nu$ and $w_a\geq \frac 2 \nu$ we have for all  $0<|k|\leq N$ 
$$|\lan \om,k\ran +\la-\lb|\geq \nu.$$
Finally  $w_a\leq \frac 2 \nu$ and $|\lan \om,k\ran +\la-\lb|\leq 1$ leads to $w_b\leq \frac 2 \nu+CN$ and thus,
if we restrict $\r$ to 
$$\D'=\D\setminus \Big[G_\nu \cup \Big(  \bigcup_{\substack{{0<|k|\leq N}\\{w_a,w_b\leq \frac 2 \nu+CN}}}F_\ka(k,a,b) \Big)\Big]$$
we get
$$|\lan \om,k\ran +\la-\lb|\geq \min(\ka,\nu), \quad 0<|k|\leq N,\ a,b\in\L.$$
Further
$$\meas \D\setminus \D'\leq CN^{n+1} \frac{\nu}{\delta_*}+(\frac 2 \nu+CN)^2N^n\frac{\ka}{\delta_*}.$$
Then choosing $\nu= \ka ^{1/3}$ and $\de_0=\de_*^{3}$, this measure is controlled by
$$CN^{n+2}\big(\frac \ka {\de_0}\big)^{1/3}$$
and we have
$$|\lan \om,k\ran +\la-\lb|\geq\ka, \quad \text{for } \r\in\D',\  0<|k|\leq N \text{ and } a,b\in\L.$$
Now we remark that for $|\la-\lb|\geq 2|\lan \om,k\ran|$, 
$$|\lan \om,k\ran +\la-\lb|\geq \frac 12 |\la-\lb|\geq \frac 14 (1+|w_a-w_b|)\geq \ka (1+|w_a-w_b|)$$
if we assume $\ka\leq \frac 14$. \\
On the other hand, when $|\la-\lb|\leq 2|\lan \om,k\ran|\leq CN$, 
$$|\lan \om,k\ran +\la-\lb|\geq  \tilde\ka(1+|w_a-w_b|)$$
where $\tilde\ka=\frac\ka {1+CN}$. Thus we get 
$$|\lan \om,k\ran +\la-\lb|\geq\tilde\ka(1+|w_a-w_b|), \quad \text{for } \r\in\D',\  0<|k|\leq N \text{ and } a,b\in\L$$
with
$$\meas  \D\setminus \D'\leq CN^{n+3}\big(\frac {\tilde\ka} {\de_0}\big)^{1/3}.$$
  \endproof
\begin{remark}\label{rem-m=0}
When $m=0$, Hypothesis A3 is verified for $ \r\in \D'=\D\setminus \G_\ga$ and thus we can choose $\de_0=\de_*$. 
\end{remark}  
\begin{lemma}\label{FOK2}
Assume that $(x,u)\mapsto g(x,u) $ is real analytic on $\S^2\times\R$ and $s>1$ then there exist $\s>0$, $\mu>0$ such that
$$\O^s(\s,\mu)\times\D\ni(r,\theta,\zeta;\r)\mapsto f(r,\theta,\zeta;\r)$$
belongs to $\Tc^{s,1/4}(\D,\s,\mu)$.
\end{lemma}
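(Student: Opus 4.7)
\medskip

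\noindent\textbf{Proof proposal.} The plan is to check each of the four ingredients in the definition of $\Tc^{s,1/4}(\s,\mu,\D)$ for the Hamiltonian $f(r,\theta,\zeta;\r)=\int_{\S^2}G(x,\hat u(r,\theta,\zeta)(x))\,dx$. The crucial input throughout is that on $\S^2$ we have $\dim=2$, so $s>1$ makes $H^s(\S^2)$ both a Banach algebra and continuously embedded in $L^\infty(\S^2)$.

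First I would fix $\s>0$ and $\mu>0$ small enough to guarantee that, uniformly in $(r,\theta,\zeta)\in\O^s(\s,\mu)$ and $\r\in\D$, the function $\hat u(r,\theta,\zeta)(\cdot)$ lies in a ball of $H^s(\S^2)$ on which the real-analytic function $g(x,\cdot)$ (and likewise $g'(x,\cdot)=\partial_u g$) extends holomorphically. Since the $\la$ for $a\in\A$ are bounded below by a positive constant and since only finitely many $a\in\A$ appear, the internal part of $\hat u$ is a smooth function, and the external part is controlled by $\|\zeta\|_s\le\mu$; both pieces depend holomorphically on $x=(\theta,r,\zeta)$ in $\O^s(\s,\mu)$. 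Once this is set up, $f$ itself is bounded and holomorphic, with the expected analogous bounds on $\partial_\r f$ (the $\r$-dependence only enters through the finitely many $\la$ with $a\in\A$).

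Next I would compute the first and second $\zeta$-derivatives explicitly:
\begin{align*}
\partial_{\xi_a}f &= \tfrac{1}{\sqrt 2\,\la^{1/2}}\langle g(x,\hat u),\Psi_a\rangle_{L^2(\S^2)},\\
\partial_{\eta_b}\partial_{\xi_a}f &= \tfrac{1}{2\,\la^{1/2}\lb^{1/2}}\int_{\S^2} g'(x,\hat u)\,\Psi_a\Psi_b\,dx,
\end{align*}
and similarly for the $\xi\xi$- and $\eta\eta$-blocks. Setting $h(x):=g(x,\hat u(x))\in H^s(\S^2)$, the Fourier decay $\sum_\ell|h_{j,\ell}|^2\le w_a^{-2s}\|h\|_{H^s}^2$ together with $\la\gtrsim w_a$ immediately yields $\|\partial_\zeta f\|_s\lesssim\|h\|_{H^{s-1/2}}\lesssim\|h\|_{H^s}$ and, since the block $L^2$-norm of $h$'s Fourier components decays as $w_a^{-s}$, also $|\partial_\zeta f|_{1/4}\lesssim\|h\|_{H^s}$.

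The main obstacle, as I expect, is the block Hilbert--Schmidt estimate for $\nabla^2_\zeta f$. Writing $M_{a,b}=\int_{\S^2} g'(x,\hat u)\Psi_a\Psi_b\,dx$ and $T:=P_{E_{j_a}}M_{g'(x,\hat u)}P_{E_{j_b}}$, one has
\begin{equation*}
\bigl\|(\nabla^2_\zeta f)_{[a]}^{[b]}\bigr\|_{HS}\ \lesssim\ \frac{\|T\|_{HS}}{(w_aw_b)^{1/2}}.
\end{equation*}
I would establish the key bound $\|T\|_{HS}\le C\,(w_aw_b)^{1/4}\|h\|_{H^s}$ with $h=g'(x,\hat u)$ by proving two asymmetric estimates and taking their geometric mean. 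For any $\ell'$ one has $\|P_{E_{j_a}}(h\Psi_{j_b,\ell'})\|_{L^2}\le w_a^{-s}\|h\Psi_{j_b,\ell'}\|_{H^s}$, and since $H^s(\S^2)$ is an algebra for $s>1$, $\|h\Psi_{j_b,\ell'}\|_{H^s}\lesssim \|h\|_{H^s}\|\Psi_{j_b,\ell'}\|_{H^s}\lesssim w_b^s\|h\|_{H^s}$. Summing over $\ell'\in[b]$ (which has $2j_b+1\lesssim w_b$ elements) gives $\|T\|_{HS}\lesssim w_a^{-s}w_b^{s+1/2}\|h\|_{H^s}$, and the dual argument (using the adjoint $T^*$) gives $\|T\|_{HS}\lesssim w_b^{-s}w_a^{s+1/2}\|h\|_{H^s}$. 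The geometric mean of these two yields
\begin{equation*}
\|T\|_{HS}\ \lesssim\ (w_aw_b)^{1/4}\|h\|_{H^s},
\end{equation*}
which, after dividing by $(w_aw_b)^{1/2}$ coming from the $\la^{-1/2}\lb^{-1/2}$ factor, gives exactly the decay $(w_aw_b)^{-1/4}$ needed for $|\nabla^2_\zeta f|_{1/4}<\infty$. The estimates for $\p_\r$-derivatives are obtained by the same argument after noting that $\p_\r\hat u$ only differs from $\hat u$ through finitely many $\A$-modes, so $\p_\r h$ still lies in $H^s$ with the same type of bound. Combining all the pieces finishes the proof that $f\in\Tc^{s,1/4}(\s,\mu,\D)$.
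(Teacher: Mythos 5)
Your proposal is correct, and the overall skeleton (reduce everything to the block Hilbert--Schmidt decay of the Hessian, with the factor $\la^{-1/2}\lb^{-1/2}\lesssim (w_aw_b)^{-1/2}$ supplying half of the decay) matches the paper; but the key estimate is proved by a genuinely different mechanism. The paper drops the projection by Bessel's inequality and then invokes Uns\"old's theorem, i.e.\ the rotation-invariance identity $\sum_{c\in[a]}|\Psi_c(x)|^2=\tfrac{\card E_{[a]}}{4\pi}\lesssim w_a$ uniformly in $x$, which yields $\|M_{[a]}^{[b]}\|_{HS}^2\lesssim \min(\la,\lb)^{-1}\int_{\S^2}|\partial_u g(x,\hat u)|^2dx$ and hence the $(w_aw_b)^{-1/4}$ decay using only an $L^2$ (in fact $L^\infty$) bound on $\partial_u g(x,\hat u)$. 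You instead keep the projections and exploit smoothness: the spectral decay $\|P_{E_{j_a}}v\|_{L^2}\lesssim w_a^{-s}\|v\|_{H^s}$, the algebra property of $H^s(\S^2)$ for $s>1$, the eigenfunction bound $\|\Psi_{j,\ell}\|_{H^s}\lesssim j^s$ and the cluster cardinality $\card[b]\lesssim w_b$, then symmetrize the two asymmetric bounds by a geometric mean; the $s$-dependence cancels and you land on exactly the same $(w_aw_b)^{1/4}$ bound for $\|T\|_{HS}$, hence the same $\M_{1/4}$ decay. What each buys: the paper's route needs no regularity of the composed function $\partial_u g(x,\hat u)$ beyond boundedness and is the exact analogue of the projector-kernel estimate (Lemma~\ref{osci}) it uses for the harmonic oscillator, but it rests on a special property of spherical harmonics; your route avoids any Uns\"old-type kernel estimate and would transfer to settings where only eigenvalue separation, cluster cardinality and eigenfunction $H^s$-growth are known, at the price of the (standard, since $s>1$ and $g$ is analytic) composition estimate $\partial_u g(\cdot,\hat u)\in H^s(\S^2)$ uniformly on $\O^s(\s,\mu)$, which you should state as an explicit lemma rather than assume. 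A minor plus: you treat the weak $\r$-dependence of $f$ through the finitely many $\A$-modes honestly, whereas the paper simply asserts $f$ is $\r$-independent.
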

\proof First we notice that $f$ does not depend on the parameter $\r$.
Due to  the analyticity of $g$ and the fact that\footnote{$s>1$ is needed to insure that $Y_s$ is an algebra.} $s>1$, there exist positive
 $\s$ and $\mu$  such that $f:\O(\s,\mu)\times\D\to \C$ is  a $C^1$-function, analytic in the first variables $(r,\theta,\zeta)$, whose gradient in $\zeta$ analytically 
maps $Y_s$ to itself (e.g., see in  \cite{EK10}). \\
It remains to verify that $\nabla^2_\zeta f(r,\theta,\zeta;\r)\in\M_{1/4}$.\\
We have
\be
\frac{\partial^2 f}{\partial\xi_a\xi_b}=\frac{\partial^2 f}{\partial\eta_a\eta_b}=\frac{\partial^2 f}{\partial\xi_a\eta_b}=\frac { 1}{2\la^{1/2}\lb^{1/2}}\int_{\S^2}\partial_{u}g(x,\hat u(x))\Psi_a\Psi_b \, dx
\ee
where $\hat u(x)\equiv \hat u(r,\theta,\zeta)(x)$ is given by \eqref{uhat}. We note that for $s>1$ and $(r,\theta,\zeta)\in\O^s(\s,\mu)$, $x\mapsto \hat u(x)$ is bounded on $\S^2$.\\
It remains to prove that  the infinite matrix $M$ defined by
$$M_a^b= \frac { 1}{\la^{1/2}\lb^{1/2}}\int_{\S^2}\partial_{u}g(x,\hat u)\Psi_a\Psi_b \, dx$$
belongs to $\M_{1/4}$, i.e.
$$\sup_{a,b\in\L}w_a^{1/4}w_b^{1/4}\left\| M_{[a]}^{[b]}\right\|_{HS}<\infty.$$
Let us denote $\Pi_b$ the orthogonal projection in $L^2(\S^2)$ on the eigenspace $E_b:=\text{Span}\{\Psi_{d}\mid w_d=w_b\}$.We have
\begin{align*}
\left\| M_{[a]}^{[b]}\right\|_{HS}^2&= \sum_{c\in[a],d\in[b]}\frac { 1}{\la\lb}\left|\int_{\S^2}\partial_{u}g(x,\hat u)\Psi_c\Psi_d \, dx\right|^2\\
&= \frac { 1}{\la\lb}\sum_{c\in[a]}\left\|\Pi_{b}(\partial_{u}g(x,\hat u)\Psi_c)   \right\|_{L^2(\S^2)}^2\\
&\leq \frac { 1}{\la\lb}\int_{\S^2}|\partial_{u}g(x,\hat u)|^2\Big(\sum_{c\in[a]}|\Psi_c|^2\Big)dx\\
&\leq \frac { C}{\lb}\int_{\S^2}|\partial_{u}g(x,\hat u)|^2dx
\end{align*}
where we used the Unsšl\"od's theorem\footnote{This result is a consequence of the two following facts:
\begin{itemize}
\item $T(x)=\sum_{c\in[a]}|\Psi_c(x)|^2$ is the trace of the integral-kernel of the projection operator on $E_{[a]}$.
\item This projection commutes with the rotations on $\S^2$ and thus $T$ does not depend on $x$.
\end{itemize}}:
$$\sum_{c\in[a]}|\Psi_c(x)|^2=\frac{\card E_{[a]}}{4\pi}\leq  C\la,\quad x\in\S^2$$
where $C$ is an universal constant.
Similarly we have
$$\left\| M_{[a]}^{[b]}\right\|_{HS}^2\leq  \frac { C}{\la}\int_{\S^2}|\partial_{u}g(x,\hat u)|^2dx$$
and thus for all $a,b\in\E$
$$w_a^{1/4}w_b^{1/4}\left\| M_{[a]}^{[b]}\right\|_{HS}\leq  \ C\Big(\int_{\S^2}|\partial_{u}g(x,\hat u)|^2dx\Big)^{\frac 1 2}\leq C'$$
for a constant $C'$ depending only on $g$.
\endproof

So  Main Theorem applies (for any choice of the vector $I\in[1,2]^\A$) and Theorem \ref{thmKG} is proved.
 
\begin{remark} We can also consider the Klein Gordon equation \eqref{KG} on the higher dimensional sphere $\S^3$ but in this case the same proof as in Lemma \ref{FOK2} will lead to $f\in\Tc^{s,0}(\D,\s,\mu)$ (since then $\card E_{[(j,\ell)]}=j^2 $). So in order to apply our KAM theorem we would need to consider the regularized Klein Gordon equation ($\b>0$):
\be
\label{KGR} 
(\partial_{t}^2-\Delta+m^2)u=\Lambda^{-\b}\partial_{2}g(x,\Lambda^{-\b}u),\quad t\in\R,\ x\in\S^3 .
\ee

\end{remark}
\begin{remark}\label{rem-beam} We can also consider the Beam equation on the torus $\T^d$  with convolution potential in a Sobolev like phase space\footnote{The same equation but in an analytic phase space were considered in \cite{EGK1,EGK2}.}:
\be \label{beam}u_{tt}+\Delta^2 u+m u+V\star u + \eps \partial_u G(x,u)=0 ,\quad   x\in \T^d.
\ee
 Here $m$ is the mass,  $G$ is a real analytic function on $\T^d\times \R$ and at least of order 3 at the origin. The convolution potential $V:\ \T^d\to \R$ is supposed to be analytic with
   real  positive Fourier coefficients $\hat V(a)$, $a\in\Z^d$. 
Actually following \cite{EGK1} and the proof of Lemma \ref{FOK2}, it remains to control the HS-norm of the infinite matrix\footnote{Here $\la=\sqrt{|a|^4+m}$ and $\Psi_a(x)=e^{ia\cdot x}$, $a\in\Z^d$.}
$$M_a^b= \frac { 1}{\la^{1/2}\lb^{1/2}}\int_{\T^d}\partial^2_uG(x,u)\Psi_a\Psi_b \, dx$$
restricted to the block defined by $[a]=\{b\in\Z^d\mid |a|=|b|\}$.  We have
$$\left\| M_{[a]}^{[b]}\right\|_{HS}^2\leq \frac { C}{\la\lb}\sup_{x\in\T^d}\sum_{c\in[a]}|e^{ic\cdot x}|^2\leq \frac { C}{\la\lb}|a|^{d-1}\leq C\frac{\la^{\frac{d-3}{2}}}{\lb}$$
which leads by symmetrization to
$$\left\| M_{[a]}^{[b]}\right\|_{HS}^2\leq\frac C {(\la\lb)^{\frac{5-d}{4}}}$$
and then $M\in \M_{\frac{5-d}{8}}$. Thus we can apply our theorem as soon as  $d\leq 4$.
\end{remark}

\subsection{The regularized quantum harmonic operator in $\R^2$.}
Let   $$T=-\Delta + |x|^2=-\Delta +x_1^2+x_2^2$$
 be the 2-dimensional quantum harmonic oscillator. 
Its spectrum is  the sum of $2$-copies of the odd integers set, i.e. the spectrum of $T$ equals $2\N=\{2,4,\cdots\}$.\\
For $2j\in 2\N$ we denote the associated eigenspace $E_j$ whose dimension is
$$\sharp\{ (i_1, i_2)\in(2\N-1)^2 \mid i_1+i_2=2j \}=j.$$
We denote $\{\Phi_{j,l}$,  $l=1,\cdots,j\}$, the basis of $E_j$ obtained by  $2$-tensor product  of Hermite functions:   $\Phi_{j,l}=\phi_{i_1} \otimes \phi_{i_2}$ with $i_1=2\ell -1$ and $i_2=2j-(2\ell-1) $.
Then  setting
$$\Nd:=\{(j,\ell)\in\N\times\N\mid \ell=1,\cdots,j\}$$
$(\Phi_a)_{a\in\Nd}$ is a basis of  $L^2(\R^2)$. \\
The Hermite multiplier M is defined on this basis  by
\be\label{M}
M \Phi_a=\r_a\Phi_a\quad \text{ for } a\in\Nd
\ee
where $(\r_a)_{a\in\Nd}$ is a bounded sequence of real numbers.

\medskip

In this subsection we consider the following nonlinear Schr\"odinger equation in $\R^2$
\be\label{harmo}
i\, u_t=Tu +M\cdot u+ T^{-\b}\partial_{2}F(x,T^{-\b}u,T^{-\b}\bar u), \quad t\in\R,\ x\in\R^2
\ee
where $M$ is a Hermite multiplier defined below and $F$ is a smooth function.\\
We focus on two choices of non linearity:
\begin{itemize}
\item {\bf The regularized cubic NLS} which corresponds to the choice
$$F_{NLS}(x,u,\bar u)=\pm\frac 1 4 |u|^4$$
which correspond in the non regularized case ($\b=0)$ to cubic NLS 
$$i\, u_t=-\Delta u +|x|^2u+M\cdot u\pm |u|^2u.$$
\item {\bf The regularized Hartree  equation} which corresponds to the choice
$$F_{Hartree}(x,u,\bar u)=\int_{\R^2} |u(x)|^2|u(y)|^2\phi(x-y)\, dy$$
where $\phi$ is a smooth function. This case correspond in the non regularized case $(\b=0)$ to the Hartree equation
$$i\, u_t=-\Delta u +|x|^2u+M\cdot u+(\phi\star |u|^2)u.$$
\end{itemize}
Let 
\begin{align}\begin{split} \label{H}
\tilde{H^s}=\{&f\in H^s(\R^2,\C) | x\mapsto {x}^{\alpha}\partial^\b f \in L^2(\R^2)\\
&\mbox{ for any } \alpha,\  \beta\in \N^2 \mbox{ satisfying } 0\leq |\alpha|+|\b|\leq s \}
\end{split}\end{align}
where $H^s(\R^2,\C)$ is the standard Sobolev space on $\R^2$. We note that, for any $s\geq 0$,
the domain of $T^{s/2}$ is $\tilde{H^s}$ (see for instance \cite{Helf84} Proposition 1.6.6) and that for $s>1$, $\tilde{H^s}$ is an algebra.\\
In the phase space $\tilde{H^s}\times\tilde{H^s}$ endowed with the symplectic 2-form $idu\wedge d\bar u$ equation \eqref{harmo} reads as the Hamiltonian system associated to the Hamiltonian function
\begin{align}\label{Hharmo}
H(u,\bar u)&= \int_{\R^2} \big(|\nabla u|^2+|x|^2|u|^2+F(x,T^{-\b}u,T^{-\b}\bar u)\big)dx\\
\nonumber &=H_0(u,\bar u)+P(u,\bar u).
\end{align}
In particular, for the regularized cubic NLS equation, the perturbation term reads
$$P_{NLS}=\pm \frac 1 4 \int_{\R^2}|T^{-\b}u|^4 dx$$
while for the regularized Hartree equation we have
$$P_{Hartree}=\int_{\R^2}\int_{\R^2} |T^{-\b} u(x)|^2|T^{-\b} u(y)|^2\phi(x-y)\, dxdy.$$
Decomposing $u$ and $\bar u$ on the basis $(\Phi_{j,l})_{(j,l)\in\Nd}$,
$$u=\sum_{a\in\Nd}\xi_{a}\Phi_a,\quad \bar u=\sum_{a\in\Nd}\eta_{a}\Phi_a$$
the phase space $(u,\bar u)\in\tilde{H^s}\times\tilde{H^s}$ becomes the phase space $(\xi,\eta)\in Y_s$
$$
Y_s=\{\zeta=(\zeta_a\in\C^2,\ a\in \Nd)\mid \|\zeta\|_s<\infty\} 
$$
where
$$
\|\zeta\|_s^2=\sum_{a\in\L}|\zeta_a|^2 w_a^{2s}$$
and
$$w_{j,\ell}=j\quad \text{ for }(j,\ell)\in\Nd.$$
We endowed $Y_s$ with the symplectic structure $id\xi\wedge d\eta$.\\
Then the Hamiltonian reads
$$H(\xi,\eta)= \sum_{a\in\Nd} (w_a+\r_a)\xi_a\eta_a+\int_{\R^2}F\left(x,\sum_{a\in\Nd}\frac{\xi_{a}}{w_a^\beta}\Phi_a, \sum_{a\in\Nd}\frac{\eta_{a}}{w_a^\beta}\Phi_a    \right).$$
Let $\A\subset \Nd$ a finite subset of cardinal $n$ which is {\it admissible} i.e. which satisfies 
$$\A\ni(j_1,\ell_1)\neq(j_2,\ell_2)\in\A \Rightarrow j_1\neq j_2.$$
We fix $r_a^0\in[1,2]$ for $a\in\A$, the initial $n$ actions, and we write the modes $\A$ in action-angle variables:
$$\xi_a=\sqrt{r_a^0+r_a}e^{i\theta_a},\quad \eta_a=\sqrt{r_a^0+r_a}e^{-i\theta_a}.$$
We define $\L=\Nd\setminus\A$ 
and we assume 
$$\r_{j,l}=\r_j \text{ for } (j,\ell)\in \A\ ;\ \r_{j,l}=0 \text{ for } (j,\ell)\in \L.$$
We set
\begin{align*}
w_{j,\ell}&=j\quad \text{ for }(j,\ell)\in\Nd,\\
\lambda_{j,\ell}&=j \text{ for }(j,\ell)\in\L,\\
\om_{j,\ell}&=j+\r_{j}\text{ for }(j,\ell)\in\A.
\end{align*}
With this notation $H$ reads (up to a constant)
$$H(\xi,\eta)= \om\cdot r+\sum_{a\in\L} \la\xi_a\eta_a+\int_{\R^2}F\left(x,\sum_{a\in\Nd}\frac{\xi_{a}}{w_a^\beta}\Phi_a, \sum_{a\in\Nd}\frac{\eta_{a}}{w_a^\beta}\Phi_a    \right).$$
\begin{lemma}\label{AOK}
Hypothesis A1, A2 and A3 hold true with $\delta =1/2$ and $\D=[0,1]^n$.\end{lemma}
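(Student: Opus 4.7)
The plan is to verify A1, A2, A3 in turn, exploiting the key simplification that here $\la=w_a\in\N$ is exactly integer-valued and the internal frequencies $(\om_0)_a(\r)=w_a+\r_a$ are linear in $\r$ with integer constant term. In particular the difference $\la-\lb=w_a-w_b$ is an \emph{exact} integer, which will make A3 considerably cleaner than in the Klein--Gordon setting of Lemma~\ref{AOK2}.

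Hypothesis A1 is immediate with $c_0=1$ and $\ga=1$: $\la=w_a$ yields both the asymptotic bound and the Lipschitz lower bound. For Hypothesis A2, for each $k\in\Z^n\setminus 0$ I would choose the unit direction $\mathfrak z=k/|k|\in\R^n$. The admissibility condition identifies the $n$ elements of $\A$ with the $n$ coordinates of $\r\in[0,1]^n$, so $\nabla_\r\lan k,\om_0\ran=k$ and hence $\nazz\lan k,\om_0\ran=|k|\geq1$. Since neither $\la$ nor $\lb$ depends on $\r$, the same lower bound holds for each combination in (i)--(iv); a $C^1$-perturbation with $|\om-\om_0|_{C^1}<\de_0=1/2$ erodes the derivative by at most $|k|\de_0$, still leaving a lower bound $\geq|k|/2\geq\de_0$.

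The substantive work is A3. Fix an admissible perturbation $\om$ and $k$ with $0<|k|\leq N$. Since $\lan k,\om_0(\r)\ran$ is affine in $\r$ with integer constant term and $\la-\lb$ is an integer, the divisor can be written
$$
\lan k,\om(\r)\ran+\la-\lb=\lan k,\r\ran+m+\phi_k(\r),\qquad m\in\Z,\ \|\phi_k\|_{C^1}\lsim|k|\de_0,
$$
with $m:=\lan k,(w_c)_{c\in\A}\ran+w_a-w_b$ and $\phi_k:=\lan k,\om-\om_0\ran$. Its derivative along $\mathfrak z=k/|k|$ remains $\geq|k|/2$, so for each integer $m$
$$
\meas\bigl\{\r\in\D\mid|\lan k,\om\ran+\la-\lb|<\ka(1+|w_a-w_b|)\bigr\}\lsim \ka(1+|w_a-w_b|)/|k|.
$$
The decisive observation is that this set depends on $(a,b)\in\L\times\L$ only through the integer $\Delta:=w_a-w_b$, and that $\r\in[0,1]^n$ forces $|\lan k,\om\ran|\lsim|k|$, so it is empty unless $|\Delta|\lsim|k|$. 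Summing the resulting $\lsim\ka(1+|\Delta|)/|k|$ bound over the $O(|k|)$ admissible values of $\Delta$ and then over the $\leq CN^n$ values of $0<|k|\leq N$ yields $\meas(\D\setminus\D')\lsim\ka N^{n+1}/\de_0$, i.e.\ A3 with $\a_1=n+1$ and $\a_2=1$.

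The only step that requires genuine care will be this last counting: naively the union $\bigcup_{(a,b)\in\L\times\L}$ diverges, and it is precisely the integrality of $\la-\lb=w_a-w_b$ that collapses it to a finite union indexed by $\Delta\in\Z$. This is exactly the structural feature absent from the Klein--Gordon model, where the non-integer part of $\sqrt{j(j+1)+m}$ forces the detour through the auxiliary set $G_\nu$ and costs the exponent $1/3$.
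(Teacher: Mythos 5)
Your proposal is correct and follows essentially the same route as the paper: A1 is immediate, A2 holds via the second alternative because $(\nabla_\r\om_a)_b=\delta_{ab}$ while $\nabla_\r\la=0$, and A3 is deduced from this transversality using precisely the observation the paper singles out, namely that $\la-\lb=w_a-w_b\in\Z$, which collapses the union over $(a,b)$ to finitely many integer values of $\Delta$ and avoids the $G_\nu$ detour of the Klein--Gordon lemma. You have merely filled in the counting details that the paper leaves implicit in its one-line reduction of A3 to A2.
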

\proof,
The asymptotics A1 are verified with $\ga=1$.\\
Next we remark that  for $a,b\in\A$, $(\nabla_\r \om_a)_b=\delta_{a,b}$ where $\delta$ denotes the Kronecker symbol while $\nabla_\r \la=(0,\cdots,0)^t$ .Thus A2 holds true with $\delta =1/2$ and $\D=[0,1]^n$.\\
Finally, noticing that $\la-\lb\in\Z$ we easily deduce A3 from A2 as in the proof of Lemma \ref{AOK}.  
\endproof
\begin{lemma}\label{FOK}
Assume that $(x,z,\bar z)\mapsto F(x,z,\bar z) $ is real analytic in $x$, $\Re z$, $\Im z$ and assume that $s>1$ then there exist $\s>0$, $\mu>0$ such that
\begin{align*}&\O^s(\s,\mu)\ni(r,\theta,\zeta)\mapsto P(r,\theta,\zeta)=\\ 
&\int_{\R^2}F\left(x,\sum_{a\in\A}\frac{\sqrt{r_a^0+r_a}e^{i\theta_a}}{w_a^\beta}\Phi_a+\sum_{a\in\L}\frac{\xi_{a}}{w_a^\beta}\Phi_a, \sum_{a\in\A}\frac{\sqrt{r_a^0+r_a}e^{-i\theta_a}}{w_a^\beta}\Phi_a\sum_{a\in\L}\frac{\eta_{a}}{w_a^\beta}\Phi_a    \right)\end{align*}
belongs to $\Tc^{s,\beta}(\D,\s,\mu)$ for any $\b\geq 0$.
\end{lemma}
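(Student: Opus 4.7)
The proof should mirror Lemma~\ref{FOK2}, with the Hermite basis of $L^2(\R^2)$ in place of spherical harmonics and with the regularization $T^{-\b}$ providing the decay that the Uns\"old identity supplied on $\S^2$. First, I would observe that $P$ carries no $\r$-dependence, so the $\r$-derivatives in $[P]^{s,\b}_{\s,\mu,\D}$ vanish and only the $j=0$ contributions need to be bounded. Next, real analyticity of $F$ together with the algebra property of $\tilde H^s$ for $s>1$ yields, by Cauchy estimates as in \cite{EK10}, that for suitable $\s,\mu>0$ the function $P:\O^s(\s,\mu)\to\C$ is real holomorphic in $(r,\theta,\zeta)$ and $\nabla_\zeta P$ maps holomorphically into $Y_s\cap L_\b$ (the regularization $T^{-\b}$ only improves this).

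The heart of the proof is the Hessian estimate $\nabla^2_\zeta P\in\M_\b$. Differentiation gives
$$\frac{\partial^2 P}{\partial \xi_a\partial\eta_b} = \frac{1}{w_a^\b w_b^\b}\int_{\R^2} F_{u\bar u}(x, T^{-\b}u, T^{-\b}\bar u)\,\Phi_a\Phi_b\,dx,$$
with analogous formulas for the $\xi_a\xi_b$ and $\eta_a\eta_b$ entries. The prefactor $1/(w_a w_b)^\b$ is exactly what the norm $|\cdot|_\b$ demands. Writing $g(x)=F_{u\bar u}(x,\ldots)$---which is bounded on $\R^2$ because $\tilde H^s\hookrightarrow L^\infty$ for $s>1$---and $\Pi_c$ for the $L^2(\R^2)$-projector onto $E_c$, one obtains
$$\|M_{[a]}^{[b]}\|_{HS}^2 = \frac{1}{w_a^{2\b} w_b^{2\b}}\sum_{c\in[a]}\|\Pi_b(g\Phi_c)\|_{L^2}^2,$$
and the task reduces to showing that the right-hand side is $\lesssim 1/(w_a w_b)^{2\b}$ uniformly in $a,b$.

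The main obstacle is that no clean 2D Uns\"old identity holds: the spectral projector diagonal $\sum_{c\in[a]}|\Phi_c(x)|^2$ is a nontrivial rotationally symmetric function of $|x|$, not bounded uniformly in $a$. The naive operator-theoretic bound $\|\Pi_b M_g \iota_a\|_{HS}^2\leq \|g\|_{L^\infty}^2\min(\dim E_a,\dim E_b)=\|g\|_{L^\infty}^2\min(w_a,w_b)$, where $\iota_a:E_a\hookrightarrow L^2$ is the inclusion and $M_g$ denotes multiplication by $g$, only yields $(w_a w_b)^\b\|M_{[a]}^{[b]}\|_{HS}\lesssim (w_a w_b)^{1/4}$, which is not uniformly bounded. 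To close the argument I would exploit the extra smoothness $T^{-\b}u\in\tilde H^{s+2\b}$ of the arguments of $F$, which forces the Hermite coefficients of $g$ itself to decay polynomially; combining this with selection rules for the triple integrals $\int \Phi_a\Phi_b\Phi_c\,dx$ (angular-momentum conservation for the 2D isotropic oscillator) should supply the missing decay, delivering $\|M_{[a]}^{[b]}\|_{HS}\lesssim (w_a w_b)^{-\b}$ and hence $\nabla^2_\zeta P\in\M_\b$.
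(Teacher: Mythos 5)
Your reduction of the problem is the same as the paper's: pull out the explicit factor $w_a^{-\b}w_b^{-\b}$ from the chain rule and show that the block Hilbert--Schmidt norms of $M_a^b=\int_{\R^2}g\,\Phi_a\Phi_b\,dx$, with $g$ bounded, are bounded uniformly in $a,b$ (i.e.\ $M\in\M_0$). But the step where you declare the ``main obstacle'' is where the proposal goes wrong: you assert that the spectral projector diagonal $\sum_{c\in[a]}|\Phi_c(x)|^2$ is \emph{not} uniformly bounded in $a$ for the 2D oscillator, and you therefore abandon the direct argument. In fact the uniform bound is precisely the paper's Lemma \ref{osci} (Karadzhov, Thangavelu, Koch--Tataru--Zworski): $K_a(x,x)\le C\lambda_a^{d/2-1}$ for $d\ge2$, which for $d=2$ gives $\sum_{c\in[a]}|\Phi_c(x)|^2\le C$ uniformly in $a$ and $x$. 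It is not an Uns\"old-type identity (the function is indeed a nontrivial function of $|x|$), but the uniform estimate is all that is needed, and it is the whole point of restricting to dimension $2$. With it, the Bessel-type computation closes immediately, exactly as in Lemma \ref{FOK2}:
\begin{equation*}
\bigl\| M_{[a]}^{[b]}\bigr\|_{HS}^2=\sum_{c\in[a]}\bigl\|\Pi_b(g\Phi_c)\bigr\|_{L^2}^2
\le\int_{\R^2}|g|^2\Bigl(\sum_{c\in[a]}|\Phi_c|^2\Bigr)dx\le C\int_{\R^2}|g|^2dx ,
\end{equation*}
which for $F_{NLS}$ is $C\|u\|_{L^4}^4$.

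The substitute you propose does not repair the gap. First, it is only a plan (``should supply the missing decay''), with no estimate on the triple products; moreover the basis $\Phi_{j,\ell}=\phi_{i_1}\otimes\phi_{i_2}$ used here is the tensor Hermite basis, not the angular-momentum (Laguerre) basis, so the invoked angular-momentum selection rules do not apply directly. Second, and more seriously, the mechanism cannot work uniformly in $\b$: the lemma is asserted for every $\b\ge0$, and for $\b=0$ (or $\b$ small) the smoothing $T^{-\b}$ gives essentially no extra decay of the Hermite coefficients of $g$, while your own computation shows the naive bound is off by a factor $\min(w_a,w_b)^{1/2}$. So the extra regularity of $T^{-\b}u$ cannot be the source of the uniform block bound; the spectral-function estimate of Lemma \ref{osci} is genuinely needed, and once you use it your argument collapses to the paper's proof.
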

\proof
We focus on the case $F=F_{NLS}$ and  on the most restrictive hypothesis: $\nabla^2_\zeta P(x,\r)\in\M_\b$.\\
We have
\begin{align*}
\frac{\partial^2 P}{\partial\xi_a\xi_b}&=\frac {\pm 1}{2w_a^\b w_b^\b}\int_{\R^2}{\bar u}^2\Phi_a\Phi_b \, dx,\\
\frac{\partial^2 P}{\partial\eta_a\eta_b}&=\frac {\pm1}{2w_a^\b w_b^\b}\int_{\R^2}u^2\Phi_a\Phi_b \, dx,\\
\frac{\partial^2 P}{\partial\xi_a\eta_b}&=\frac1{w_a^\b w_b^\b}\int_{\R^2}|u|^2\Phi_a\Phi_b \, dx.
\end{align*}
So it remains to prove that  the infinite matrix $M$ defined by
$$M_a^b= \int_{\R^2}|u|^2\Phi_a\Phi_b \, dx$$
belongs to $\M_0$, i.e.
$$\sup_{a,b\in\L}\left\| M_{[a]}^{[b]}\right\|_{HS}<\infty.$$
Let us denote $\Pi_b$ the orthogonal projection in $L^2(\R^2)$ on the eigenspace $E_b:=\text{Span}\{\Phi_{d}\mid w_d=w_b\}$.We have
\begin{align*}
\left\| M_{[a]}^{[b]}\right\|_{HS}^2&= \sum_{c\in[a],d\in[b]}\left|\int_{\R^2}|u|^2\Phi_c\Phi_d \, dx\right|^2\\
&= \sum_{c\in[a]}\left\|\Pi_{b}(|u|^2\Phi_c)   \right\|_{L^2(\R^2)}^2\leq \sum_{c\in[a]}\left\||u|^2\Phi_c   \right\|_{L^2(\R^2)}^2\\
&\leq \int_{\R^2}|u|^4\Big(\sum_{c\in[a]}|\Phi_c|^2\Big)dx\\
&\leq C\int_{\R^2}|u|^4dx
\end{align*}
where we used the crucial property of the quantum Harmonic oscillator (see Lemma \ref{osci} just below):
\be\label{psiC}\sum_{c\in[a]}|\Phi_c(x)|^2\leq C,\quad x\in\R^2\ee
where $C>0$ does not depend on $a$. 
\endproof
The function $K_{a}(x,y):=\sum_{c\in[a]}\Phi_c(x)\Phi_c(y)$ is the  integral kernel of the projection operator on $E_a$. It does not depend on the choice of the basis of $E_a$. 
From \cite{Than,Kara} (see also \cite{KTZ}) we learn the following (non trivial) estimate
\begin{lemma}\label{osci}
Let $d\geq 2$, there exists $C>0$ such that for all $a$
$$K_a(x,x)\leq C\lambda_a^{d/2-1} \quad \text{for all } x\in\R^d.$$
\end{lemma}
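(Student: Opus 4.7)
The plan is to reduce the bound to a classical $L^\infty$ estimate for a signed sum of Laguerre polynomials, and then to invoke sharp asymptotics from the cited literature.

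First, since $T=-\Delta+|x|^2$ commutes with the action of $O(d)$, so does the spectral projector $\Pi_{[a]}$, and hence its diagonal $K_a(x,x)=\sum_{c\in[a]}|\Phi_c(x)|^2$ is a radial function of $x$. This already reduces the problem to a bound in the single variable $r=|x|$.

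Second, the Mehler formula for the harmonic-oscillator heat kernel, restricted to the diagonal and re-parametrized by $z=e^{-2t}$ with $n$ the non-negative integer such that $\la=2n+d$, gives the generating identity
\[
\sum_{n\ge 0}z^n\, K_a(x,x)=\frac{1}{(\pi(1-z^2))^{d/2}}\exp\!\Big(-\frac{1-z}{1+z}|x|^2\Big).
\]
Writing $\exp(-r^2(1-z)/(1+z))=e^{-r^2}\,e^{2r^2 z/(1+z)}$, substituting the Laguerre generating series $e^{wz/(1+z)}=(1+z)^{\alpha+1}\sum_k L_k^{(\alpha)}(w)(-z)^k$ with $\alpha=d/2-1$ and $w=2|x|^2$, and expanding $(1-z)^{-d/2}$ as a binomial series, one reads off
\[
K_a(x,x)=\frac{e^{-|x|^2}}{\pi^{d/2}}\sum_{k=0}^n(-1)^k\binom{n-k+\tfrac{d}{2}-1}{n-k}\,L_k^{(d/2-1)}(2|x|^2).
\]
Thus $K_a(x,x)$ is an explicit signed combination of weighted Laguerre polynomials in the single variable $|x|^2$.

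Finally, apply sharp uniform bounds for this alternating sum. A naive triangle-inequality bound gives only $O(\la^{d-1})$, since $\sim n^{d/2-1}$ binomial weights each multiply a Laguerre term of size $\sim k^{d/2-1}$; the cancellations in the alternating sum must be exploited. The classical $L^\infty$ Laguerre estimates of Thangavelu \cite{Than} and Karadzhov \cite{Kara} (see also \cite{KTZ}) provide precisely
\[
\sup_{r\ge 0}\Big|\sum_{k=0}^n(-1)^k\binom{n-k+\tfrac{d}{2}-1}{n-k}L_k^{(d/2-1)}(2r^2)\,e^{-r^2}\Big|\le C\,n^{d/2-1},
\]
and since $\la\asymp n$ this is the claim. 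The main difficulty is this last step: the supremum of $K_a(x,x)$ is attained near the classical turning point $r^2\sim\la$, where the Laguerre partial sums are governed by Airy-type rather than purely oscillatory asymptotics, and producing a bound uniform across the bulk, the turning-point zone, and the exponentially damped forbidden region is precisely the non-trivial Plancherel--Rotach-type analysis that the author borrows from the cited references rather than reproving.
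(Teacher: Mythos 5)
Your proposal is correct and, in substance, takes the same route as the paper: the paper gives no proof of Lemma \ref{osci} at all, quoting the estimate directly from \cite{Than,Kara} (see also \cite{KTZ}), and your argument likewise defers the only non-trivial step --- the uniform bound on the alternating Laguerre sum, which after your (correct) Mehler/generating-function identity is just a restatement of the lemma --- to those same references. The explicit reduction of $K_a(x,x)$ to the weighted Laguerre sum is accurate and is in fact the standard setup used in the cited works, so nothing is gained or lost relative to the paper's treatment.
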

In particular, in dimension $d= 2$, we deduce \eqref{psiC}.
\section{Poisson brackets and Hamiltonian flows.}
It turns out that the  space $\Tc^{s,\beta}(\D,\s,\mu)$ is not stable by Poisson brackets.
Therefore, in this section, we first define a new  space $\Tc^{s,\beta+}(\D,\s,\mu)\subset \Tc^{s,\beta}(\D,\s,\mu)$ and then we prove a structural stability  which is essentially contained in the claim
 $$\{\Tc^{s,\beta+}(\D,\s,\mu)\ ,\ \Tc^{s,\beta}(\D,\s,\mu)\} \in \Tc^{s,\beta}(\D,\s,\mu).$$  
We will also study the hamiltonian flow generated by hamiltonian function in $\Tc^{s,\beta+}(\D,\s,\mu)$.\\

\subsection{New Hamiltonian space}
We introduce $\Tc^{s,\beta+}(\D,\s,\mu)\subset \Tc^{s,\beta}(\D,\s,\mu)$defined by
$$\Tc^{s,\beta+}(\D,\s,\mu)=\{f\in \Tc^{s,\beta}(\D,\s,\mu)\mid\partial^j_r\nabla_\zeta f\in L_{\b}^+\ ,\ \partial^j_\r \nabla^2_\zeta f\in\M_\b^+,\ j=0,1\}$$
where 
$$\M_\b^+=\{M\in\M\mid |M|_{\b+} <\infty \},\quad L_{\b}^+=\{\zeta\in L\mid |\zeta|_{\b+} <\infty \} $$
and 
\begin{align*}|M|_{\b+}&=\sup_{a,b\in\L}(1+|w_a-w_b|)w_a^{\b}w_b^{\b}\left\| M_{[a]}^{[b]} \right\|_{HS}\\
|\zeta|_{\b+}&=\sup_{a\in\L}w_a^{\b +1}|\zeta_a|.
\end{align*}
We endow $\Tc^{s,\beta+}(\D,\s,\mu)$ with the norm
$$[f]_{\s,\mu,\D}^{s,\b+}=[f]_{\s,\mu,\D}^{s,\b}+\sup_{j=0,1}\Big(\mu |\partial^j_r\nabla_\zeta f|_{\b+}+\mu^2|\partial^j_\r \nabla^2_\zeta f|_{\b+}\Big).$$

\begin{lemma}\label{product} Let $\b>0$ there exists a constant $C\equiv C(\b)>0$ such that
\begin{itemize}
\item[(i)]
Let $A\in \M_\b^+$ and $B\in \M_\b$ then $AB$ and $BA$ belong to $\M_\b$ and
$$|AB|_{\b},\ |BA|_{\b}\leq C|A|_{\b+}|B|_{\b}.$$
\item[(ii)]
Let $A,B\in \M_\b^+$  then $AB$ and $BA$ belong to $\M_\b^+$ and
$$|AB|_{\b+},\ |BA|_{\b+}\leq C|A|_{\b+}|B|_{\b+}.$$
\item[(iii)] Let $A\in \M_\b^+$ and $\zeta\in Y_s$ for some $s\geq0$ then $A\zeta \in L_\b$ and
$$|A\zeta|_\b\leq C|A|_{\b+}\|\zeta\|_s.$$
\item[(iv)] Let $A\in \M_\b$ and $\zeta\in L_\b^+$ then $A\zeta \in L_\b$ and
$$|A\zeta|_\b\leq C|A|_{\b}|\zeta|_{\b+}.$$
\item[(v)] Let $A\in \M_\b$ and $\zeta\in Y_s$ for some $s\geq 1$ then $A\zeta \in L_\b$ and
$$|A\zeta|_\b\leq C|A|_{\b}\|\zeta\|_s.$$
\item[(vi)] Let $A\in \M_\b+$ and $\zeta\in Y_s$ for some $s\geq 1$ then $A\zeta \in L_{\b+}$ and
$$|A\zeta|_{\b+}\leq C|A|_{\b+}\|\zeta\|_s.$$
\item[(vii)] Let $A\in \M_\b+$ and $\zeta\in L_\b^+$  then $A\zeta \in L_{\b+}$ and
$$|A\zeta|_{\b+}\leq C|A|_{\b+}|\zeta|_{\b+}.$$
\item[(viii)] Let $X\in L_\b $ and $Y\in L_\b$  then $A=X\otimes Y\in\M_\b$ and
$$|A|_{\b}\leq C|X|_{\b}|Y|_\b.$$
\end{itemize}
\end{lemma}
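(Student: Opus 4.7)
The plan is to reduce all eight estimates to a common block-level inequality plus the convergence of a single family of numerical series indexed by equivalence classes. For the matrix--matrix items (i), (ii), (viii) the reduction is the block decomposition $(AB)_{[a]}^{[c]}=\sum_{[b]}A_{[a]}^{[b]}B_{[b]}^{[c]}$ combined with the submultiplicative Hilbert--Schmidt bound $\|A_{[a]}^{[b]}B_{[b]}^{[c]}\|_{HS}\le\|A_{[a]}^{[b]}\|_{HS}\|B_{[b]}^{[c]}\|_{HS}$. For the matrix--vector items (iii)--(vii) the reduction is
$$|(A\zeta)_{[a]}|\le\sum_{[b]}\|A_{[a]}^{[b]}\|_{HS}\,|\zeta_{[b]}|;$$
when the target norm is $|\cdot|_{\b+}$ I would work pointwise at an index $c\in[a]$, using $|(A\zeta)_c|\le|(A\zeta)_{[a]}|$ and the identity $w_c=w_a$.

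Inserting the norm definitions, item (i) boils down to showing that
$$S(a):=\sum_{[b]}\frac{1}{(1+|w_a-w_b|)\,w_b^{2\b}}$$
is bounded uniformly in $a\in\L$ for $\b>0$; this I would prove by splitting the sum into the three ranges $w_b\le w_a/2$, $w_a/2<w_b<2w_a$, $w_b\ge2w_a$ and estimating each separately. Item (ii) follows from (i) via the elementary inequality
$$\frac{1+|w_a-w_c|}{(1+|w_a-w_b|)(1+|w_b-w_c|)}\le\frac{1}{1+|w_a-w_b|}+\frac{1}{1+|w_b-w_c|},$$
which splits the desired sum into two copies of $S$. Item (viii) is immediate from the tensor product formula $A_c^d=X_c(Y_d)^t$, which yields the block identity $\|A_{[a]}^{[b]}\|_{HS}=|X_{[a]}|\,|Y_{[b]}|$.

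Items (iii)--(vii) follow the same pattern, after converting the hypothesis on $\zeta$ into a decay rate for $|\zeta_{[b]}|$: for $\zeta\in Y_s$ the identity $\|\zeta\|_s^2\ge w_b^{2s}|\zeta_{[b]}|^2$ gives $|\zeta_{[b]}|\le\|\zeta\|_s\,w_b^{-s}$; for $\zeta\in L_\b$ the definition gives $|\zeta_{[b]}|\le|\zeta|_\b\,w_b^{-\b}$; and for $\zeta\in L_\b^+$ the pointwise bound $|\zeta_c|\le|\zeta|_{\b+}\,w_c^{-\b-1}$ together with the cluster bound $d_b\le Cw_b^d$ gives $|\zeta_{[b]}|\le\sqrt{d_b}\,|\zeta|_{\b+}\,w_b^{-\b-1}$. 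Combining these with the HS--bound coming from $|A|_\b$ or $|A|_{\b+}$ turns each of (iii)--(vii) into a weighted sum of the same type as $S(a)$. The role of the $+$-decoration on either $A$ or $\zeta$ is precisely to supply the additional decay factor ($(1+|w_a-w_b|)^{-1}$ on the matrix side, $w_b^{-1}$ on the vector side) needed for uniform convergence. For the two $+$-target items (vi) and (vii) the extra weight $w_c^{\b+1}$ on the left has to be absorbed: in (vi) by the factor $(1+|w_a-w_b|)^{-1}$ from $|A|_{\b+}$, and in (vii) by the factor $w_b^{-1}$ coming from $L_\b^+$, again via the three-region splitting of (i).

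The main technical point I would be careful about is not conceptual but arithmetic: in (iv), (vi), (vii) the cluster size $d_b\le Cw_b^d$ introduces a factor $w_b^{d/2}$ into the tail, so the residual sum takes the form $\sum_{[b]}w_b^{-\alpha}$ with an exponent $\alpha$ depending jointly on $\b$ and on the cluster-growth exponent $d$; one then has to check that the regime of interest gives $\alpha>1$. This is exactly the dimensional restriction that resurfaces later (the choice $\b=1/4$ on $\S^2$ in Lemma~\ref{FOK2} and the regularising factor $\Lambda^{-\b}$ on $\R^2$ in Lemma~\ref{FOK}), and it explains why the constant $C(\b)$ in the statement should in practice be read as $C(\b,d)$.
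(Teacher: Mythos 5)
Your treatment of (i), (ii), (iii), (v), (vi) and (viii) is essentially the paper's own argument: the paper also decomposes over energy levels, uses the Hilbert--Schmidt submultiplicativity blockwise, and reduces everything to the uniform bound $\sum_{k}k^{-2\b}(1+|w_a-k|)^{-1}\le C(\b)$; your elementary inequality for (ii) is just a repackaging of the paper's covering $\{|w_a-k|\ge\frac13|w_a-w_b|\}\cup\{|w_b-k|\ge\frac13|w_a-w_b|\}$, and your tensor identity for (viii) is exactly what is used. So for these items there is nothing to object to.

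The genuine gap is in (iv) and (vii). You read $|\zeta|_{\b+}$ as a pointwise weighted sup and pay a factor $\sqrt{d_b}\lesssim w_b^{d/2}$ to pass to the block quantity $|\zeta_{[b]}|$; your residual sums then converge only under the extra condition $\b>d/4$, and your constant depends on $d$. This does not prove the lemma as stated ($C=C(\b)$, for every $\b>0$, with no relation between $\b$ and $d$), and, more importantly, it fails exactly where the lemma is used: in the Klein--Gordon application one has $\b=1/4$ and $d=1$, so $\b>d/4$ is violated, while item (iv) with these parameters is what makes the Poisson-bracket Lemma \ref{lemma-poisson} (the term $g_{\zeta\zeta}Jf_\zeta$) work. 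The paper's proof avoids this entirely by working with the blockwise reading $|\zeta|_{\b+}=\sup_a w_a^{\b+1}|\zeta_{[a]}|$ (the displayed definition is sloppily written with $|\zeta_a|$, but this is the norm actually produced in Proposition \ref{prop:homo3}, where $S_\zeta$ is estimated block by block, and actually consumed later); then $w_b^{1+\b}|\zeta_{[b]}|\le|\zeta|_{\b+}$ holds with no cluster-size factor, the sums in (iv) and (vii) are the same convergent series as in (i) and (iii), and $C$ depends only on $\b$. Two smaller corrections: item (vi) needs no cluster-size factor even in your reading, since its hypothesis is $\zeta\in Y_s$ and the bound $|\zeta_{[b]}|\le w_b^{-s}\|\zeta\|_s$ is already blockwise; and the choice $\b=1/4$ in Lemma \ref{FOK2} does not come from this lemma at all, but from the Hilbert--Schmidt estimate of the Hessian blocks of the nonlinearity (via the addition theorem for spherical harmonics), so the ``dimensional restriction'' you point to is not the one encoded here.
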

\proof
(i)--Let $a,b\in\L$ and for $k\in\{w_a\mid a\in\L\}:=\Nb \subset \N$ denote by $\bk$ an element of $\L$ satisfying $w_\bk=k$.   We have
\begin{align*}
\left\| (AB)_{[a]}^{[b]} \right\|_{HS}&\leq \sum_{k\in \Nb}\left\| A_{[a]}^{[\bk]} \right\|_{HS}\left\| B_{[\bk]}^{[b]} \right\|_{HS}\\
&\leq \frac{|A|_{\b+}|B|_{\b}}{w_a^{\b}w_b^{\b}}\sum_{k\in \Nb}\frac 1 {k^{2\b}(1+|w_a-k|)}\\
&\leq C \frac{|A|_{\b+}|B|_{\b}}{w_a^{\b}w_b^{\b}}
\end{align*}
where we used that $\sum_{k\in \Nb}\frac 1 {k^{2\b}(1+|w_a-k|)}\leq C$ for a constant $C>0$ depending only on $\b$.\\
(ii)--Similarly
\begin{align*}
\left\| (AB)_{[a]}^{[b]} \right\|_{HS}&\leq \sum_{k\in \Nb}\left\| A_{[a]}^{[\bk]} \right\|_{HS}\left\| B_{[\bk]}^{[b]} \right\|_{HS}\\
&\leq \frac{|A|_{\b+}|B|_{\b+}}{w_a^{\b}w_b^{\b}}\sum_{k\in \Nb}\frac 1 {k^{2\b}(1+|w_a-k|)(1+|w_b-k|)}\\
&\leq C \frac{|A|_{\b+}|B|_{\b+}}{w_a^{\b}w_b^{\b}(1+|w_a-w_b|)}
\end{align*}
where we used
$$\{k\geq 1\}\subset \{ k\geq 1, |w_a-k|\geq \frac 1 3 |w_a-w_b|\}\cup \{ k\geq 1, |w_b-k|\geq \frac 1 3 |w_a-w_b|\}.$$
(iii)--We have for any $a\in\L$
\begin{align*}
|(A\zeta)_{[a]}|&= \left| \sum_{j\in \Nb}A_{[a]}^{[\bj]}\zeta_{[\bj]}  \right|\\
&\leq  \sum_{j\in  \Nb}\left\|A_{[a]}^{[\bj]}\right\|_{HS}|\zeta_{[\bj]}| \\
&\leq \sum_{j \in \Nb}\frac{1}{(1+|w_a-j|)w_a^{\b}j^{\b}}\ |A|_{\b+}\|\zeta\|_s\\
&\leq C w_a^{-\b} |A|_{\b+}\|\zeta\|_s\ .
\end{align*}
(iv)--Similarly
\begin{align*}
|(A\zeta)_{[a]}|&= \left| \sum_{j\in  \Nb}A_{[a]}^{[\bj]}\zeta_{[\bj]}  \right|\\
&\leq  \sum_{j\in  \Nb}j^{-1-\b}\left\|A_{[a]}^{[\bj]}\right\|_{HS}j^{1+\b}|\zeta_{[\bj]}| \\
&\leq \sum_{j \in \Nb}\frac{1}{w_a^{\b}j^{1+2\b}}\ |A|_{\b}|\zeta|_{\b+}\\
&\leq C w_a^{-\b} |A|_{\b}|\zeta|_{\b+}\ .
\end{align*}
(v)--Similarly
\begin{align*}
|(A\zeta)_{[a]}|&= \left| \sum_{j\in  \Nb}A_{[a]}^{[\bj]}\zeta_{[\bj]}  \right|\\
&\leq  \sum_{j\in  \Nb}j^{-s}\left\|A_{[a]}^{[\bj]}\right\|_{HS}j^{s}|\zeta_{[\bj]}| \\
&\leq \sum_{j \in \Nb}\frac{1}{w_a^{\b}j^{s+\b}}\ |A|_{\b}\|\zeta\|_s\\
&\leq C w_a^{-\b} |A|_{\b}\|\zeta\|_s\ .
\end{align*}
(vi)--Similarly
\begin{align*}
|(A\zeta)_{[a]}|&= \left| \sum_{j\in  \Nb}A_{[a]}^{[\bj]}\zeta_{[\bj]}  \right|\\
&\leq  \sum_{j\in  \Nb}j^{-s}\left\|A_{[a]}^{[\bj]}\right\|_{HS}j^{s}|\zeta_{[\bj]}| \\
&\leq \sum_{j \in \Nb}\frac{1}{(1+|w_a-j|)w_a^{\b}j^{s+\b}}\ |A|_{\b+}\|\zeta\|_s\\
&\leq C w_a^{-\b-1} |A|_{\b+}\|\zeta\|_s\ 
\end{align*}
where we used that for $s\geq 1$
$$\sum_{j \in \Nb}\frac{1}{(1+|w_a-j|)j^{s+\b}}\leq \frac C{w_a}.$$
(vii)--Similarly 
\begin{align*}
|(A\zeta)_{[a]}|&= \left| \sum_{j\in  \Nb}A_{[a]}^{[\bj]}\zeta_{[\bj]}  \right|\\
&\leq  \sum_{j\in  \Nb}j^{-1-\b}\left\|A_{[a]}^{[\bj]}\right\|_{HS}j^{\b+1}|\zeta_{[\bj]}| \\
&\leq \sum_{j \in \Nb}\frac{1}{(1+|w_a-j|)w_a^{\b}j^{1+2\b}}\ |A|_{\b+}|\zeta|_{\b+}\\
&\leq C w_a^{-\b-1} |A|_{\b+}|\zeta|_{\b+}\ .
\end{align*}
(viii)--Finally 
\begin{align*}
\left\| A_{[a]}^{[b]} \right\|^2_{HS}&= \sum_{c\in[a],d\in[b]}|X_c|^2|Y_d|^2\\
&\leq |X_{[a]}|^2|Y_{[b]}|^2
\end{align*}
and thus
$$|A|_\b\leq |X|_\b|Y|_\b\ .$$

\endproof

\subsection{ Jets of functions.} \label{ss5.1}

For any function $h\in \Tc^{s}(\s,\mu,\D)$ we define its jet $h^T=h^T(x,\r)$ as the following 
 Taylor polynomial of $h$ at $r=0$ and $\zeta=0$:
\be\begin{split}
\label{jet}
h^T=&h_\theta+\langle h_r, r\rangle+\langle h_\zeta,\zeta\rangle+\frac 1 2 \langle h_{\zeta\zeta}\zeta,\zeta \rangle\\
=&h(\theta,0,\r)+\langle \nabla_rh(\theta,0,\r),r \rangle+\langle \nabla_\zeta h(\theta,0,\r),\zeta\rangle+\frac 1 2 \langle \nabla^2_{\zeta \zeta}h(\theta,0,\r)\zeta,\zeta \rangle
\end{split}\ee
Functions of the form  $h^T$ will be called {\it jet-functions.}\\
Directly from  the definition of the norm 
$[h]^{s,\b }_{\s,\mu,\D}$ we get that 
\begin{align}\begin{split}\label{norm2}
&|h_\theta(\theta,\r)|\leq[h]^{s}_{\s,\mu,\D}, 
\quad |h_r(\theta,\r)|\leq\mu^{-2}[h]^{s}_{\s,\mu,\D},\\
&\|h_\zeta(\theta,\r)\|_s \leq \mu^{-1}[h]^{s}_{\s,\mu,\D},
\quad |h_\zeta(\theta,\r)|_\b \leq \mu^{-1}[h]^{s,\b}_{\s,\mu,\D},\\
&| h_{\zeta \zeta}(\theta,\r)|_{\b}\leq\mu^{-2}[h]^{s,\b}_{\s,\mu,\D},
\end{split}\end{align}
for any $\theta\in\T^n_\s$ and any $\r\in \D$. Moreover, the first derivative with respect to $\r$ will satisfy the same estimates.\\
We also notice that by Cauchy estimates we have that for $x\in\O(\s,\mu')$
\be\label{Cauchy}
\|\nabla^2_\zeta h(x)\|_{\L(Y_s,Y_s)}\leq \frac{\sup_{y\in\O(\s,\mu)}\|\nabla_\zeta h(y)\|_{s}}{\mu-\mu'}.\ee
Thus $h_{\zeta\zeta}$ is a linear continuous operator from $Y_s$ to $Y_s$ and
\be \label{norm3}\|h_{\zeta\zeta}(\theta,\r)\|_{\L(Y_s,Y_s)}\leq \mu^{-2}[h]^{s}_{\s,\mu,\D} \ee
for any $\theta\in\T^n_\s$ and any $\r\in \D$.
\begin{proposition}\label{lemma:jet}
For any 
$h\in \Tc^{s,\b}(\s,\mu,\D)$  we have $h^T\in \Tc^{s,\b}(\s,\mu,\D)$,
$$
[h^T]^{s,\b}_{\s,\mu,\D}\leq C  [h]^{s,\b}_{\s,\mu,\D}\,,$$
and, for any   $0<\mu' < \mu$,
$$
[h-h^T]^{s,\b}_{\s,\mu',\D}
\leq  C \left(\frac{\mu'}{\mu}\right)^3 
 [h]^{s,\b}_{\s,\mu,\D}\,,
$$
where $C$ is an absolute constant. 
\end{proposition}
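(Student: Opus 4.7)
The plan is to treat the two estimates separately, both via Cauchy-type arguments.

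For the first bound $[h^T]^{s,\b}_{\s,\mu,\D} \leq C [h]^{s,\b}_{\s,\mu,\D}$, I would control each term of the jet \eqref{jet} directly. On $\O^s(\s, \mu)$, using \eqref{norm2}--\eqref{norm3} together with $|r| < \mu^2$ and $\|\zeta\|_s < \mu$,
$$|h^T| \leq |h_\theta| + |h_r|\,|r| + \|h_\zeta\|_s \|\zeta\|_s + \|h_{\zeta\zeta}\|_{\L(Y_s, Y_s)} \|\zeta\|_s^2 \lesssim [h]^{s,\b}_{\s,\mu,\D},$$
and analogously for $|\partial_\rho h^T|$. For the $Y_s$, $L_\b$ and $\M_\b$ pieces of the norm one uses $\nabla_\zeta h^T = h_\zeta + h_{\zeta\zeta}\zeta$ and $\nabla^2_\zeta h^T = h_{\zeta\zeta}$; the only term not controlled immediately by \eqref{norm2} is the $L_\b$-norm of $h_{\zeta\zeta}\zeta$, which is supplied by Lemma \ref{product}(v).

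For the remainder bound, the key device is a weighted one-parameter rescaling. Fix $(\theta, r, \zeta) \in \O^s(\s,\mu')$ and set
$$F(t) := h(\theta, t^2 r, t\zeta),$$
which is holomorphic on $\{|t| < \mu/\mu'\}$ since $|t^2 r| < \mu^2$ and $\|t\zeta\|_s < \mu$ there. A direct chain-rule computation yields $F(0) + F'(0) + \tfrac{1}{2} F''(0) = h^T(\theta, r, \zeta)$ while $F(1) = h(\theta, r, \zeta)$, so $(h - h^T)(\theta, r, \zeta)$ equals the order-two Taylor remainder $F(1) - T_2 F(1)$. The Cauchy formula on a circle $|w| = R'$ with $1 < R' < \mu/\mu'$ bounds this remainder by $\sup_{|w|=R'}|F(w)|/[R'^2(R' - 1)]$; letting $R' \to \mu/\mu'$ and using $\sup |F| \leq C[h]$ from the first part gives a scalar bound of order $(\mu'/\mu)^3 [h]$, uniform once $\mu'$ is a fixed fraction of $\mu$ (which is the regime relevant to the KAM iteration, typically $\mu' \leq \mu/2$).

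The same rescaling argument is then applied to $\nabla_\zeta(h - h^T)$ and $\nabla^2_\zeta(h - h^T)$, viewed as holomorphic $(Y_s \cap L_\b)$- and $\M_\b$-valued functions of $t$; this vector-valued holomorphy is precisely what membership in $\Tc^{s,\b}$ encodes. Crucially, these two objects vanish at $t = 0$ only to orders $2$ and $1$ respectively (not $3$), because the jet cancels only the pure $\zeta$-derivatives at the origin and leaves $\partial_r \nabla_\zeta h$ and $\nabla^3_\zeta h$ uncancelled. The missing orders are exactly compensated by the $\mu^1, \mu^2$ weights built into the norm: $\mu'\|\nabla_\zeta(h-h^T)\|_s$ picks up $(\mu'/\mu)^2$ from the Cauchy estimate plus $\mu'/\mu$ from the weight, totalling $(\mu'/\mu)^3$; similarly for $\mu'|\nabla_\zeta(h-h^T)|_\b$ and $\mu'^2|\nabla^2_\zeta(h-h^T)|_\b$. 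The $\partial_\rho$ derivatives are handled identically since $\partial_\rho$ commutes with both the $t$-scaling and the jet construction.

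The main obstacle is the bookkeeping: for each of the three objects one must carefully match the order of vanishing in $t$ with the $\mu^k$-weight in the norm to see that they combine into the same power $(\mu'/\mu)^3$. A minor technical subtlety is that the natural Cauchy bound carries a hidden factor $\mu/(\mu - \mu')$, so the constant $C$ in the statement is honestly absolute only once $\mu'$ is bounded away from $\mu$; this is harmless since the proposition is always applied with $\mu' \leq \mu/2$ in the iteration.
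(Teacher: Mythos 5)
Your treatment of the main (remainder) estimate is essentially the paper's own argument: the anisotropic scaling $r\mapsto t^2r$, $\zeta\mapsto t\zeta$, holomorphy of the scaled function on a disc of radius $\mu/\mu'$, Cauchy bounds on the Taylor coefficients, and the observation that the jet removes exactly the low-order terms, with the orders of vanishing ($3$, $2$, $1$ for $h-h^T$, $\nabla_\zeta(h-h^T)$, $\nabla^2_\zeta(h-h^T)$) recombining with the weights $1,\mu',\mu'^2$ to give $(\mu'/\mu)^3$ in each component; your bookkeeping here is correct. Two small points of divergence. First, you prove the bound $[h^T]^{s,\b}_{\s,\mu,\D}\le C[h]^{s,\b}_{\s,\mu,\D}$ directly from \eqref{norm2}--\eqref{norm3} plus Lemma \ref{product}(v) for $|h_{\zeta\zeta}\zeta|_\b$; this works but imports the hypothesis $s\ge1$ (really $s+\b>1$) hidden in Lemma \ref{product}(v), whereas the paper gets the same bound for all $s\ge0$ by writing $h^T=h-(h-h^T)$, using the remainder estimate at $\mu'=\mu/2$, and then exploiting that $h^T$ is a quadratic polynomial to pass from $\mu/2$ back to $\mu$ — a route that never needs the matrix-vector product lemma. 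Second, your constant degenerates as $\mu'\to\mu$ and you dismiss that regime as irrelevant to the iteration; the proposition, however, claims an absolute constant for every $0<\mu'<\mu$, and the paper closes this corner in one line: for $\mu/2\le\mu'<\mu$ one has $(\mu'/\mu)^3\ge 1/8$, so $[h-h^T]^{s,\b}_{\s,\mu',\D}\le[h^T]^{s,\b}_{\s,\mu,\D}+[h]^{s,\b}_{\s,\mu,\D}$ already gives the stated bound. With these two adjustments your proof is complete and coincides in substance with the paper's.
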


\proof 
We start with the second statement. Consider first the hessian  $\nabla^2_{\zeta\zeta}(h-h^T)(x)$ 
for $x=(\theta,r, \zeta)\in \O^{s}(\s,\mu')$.
 Let us denote $m=\mu'/\mu$. Then for $z\in\overline  D_1=\{z\in\C: |z|\le1\}$ 
 we have $(\theta, (z/m)^2 r,(z/m)\zeta)\in \O^{s}(\s,\mu)$. Consider the function
\begin{equation*}
\begin{split}
&f: D_1\times \O^{s}(\s,\mu')\to \M_\b\,,\\
& (z,x)\mapsto \nabla^2_{\zeta\zeta} h(\theta, (z/m)^2 r,(z/m)\zeta)=h_0(x)+h_1(x) z+\dots \,.
\end{split}
\end{equation*}
It 
is holomorphic and its norm is bounded by $\mu^{-2}[h]^{s,\b}_{\D,\s,\mu}$. So, by the 
Cauchy estimate, $|h_j (x)|_{\b}\leq \mu^{-2}[h]^{s,\b}_{\D,\s,\mu}$ for  $j=1,2,\dots$ and $x\in \O^{s}(\s,\mu')$.
Since $\nabla^2_{\zeta\zeta}(h-h^T)(x)=h_1(x)m+h_2(x)m^2+\cdots,$ then
$\nabla^2_{\zeta\zeta}(h-h^T)$ is holomorphic in $x\in  \O^{s}(\s,\mu)$, and 
$$
|\nabla^2_{\zeta\zeta}(h-h^T)(x)|_{\b}\leq \mu^{-2}[h]^{s,\b}_{\s,\mu,\D}(m+m^2+\dots)\leq \mu^{-2}[h]^{s,\b}_{\s,\mu,\D}\frac{m}{1-m}.
$$
So $\nabla^2_{\zeta\zeta}(h-h^T)$ satisfies the required estimate with $C=2$, if $\mu'\le \mu/2$. 

 Same argument applies to bound  the norms of $\partial_\r \nabla^2_{\zeta\zeta}(h-h^T)$, 
 $h-h^T$  and $ \nabla_\zeta(h-h^T)$ if $\mu'\le \mu/2$,
  and to prove the analyticity of these mappings.
 \smallskip
 
 Now we turn to the  first statement and  write $h^T$ as $h-(h-h^T)$. This implies that $h^T$, $\nabla_\zeta h^T$ and
 $\nabla^2_{\zeta \zeta} h^T$ are analytic on $\O^{s}(\s, \frac12 \mu)$ and that 
 $$
 [h^T]^{s,\b}_{\s, \frac12\mu,\D}\leq C_1 [h]^{s,\b}_{\s,\mu,\D}\,.
 $$
 Since  $h^T$ is a quadratic polynomial, then the mappings  $h^T$, $\nabla_\zeta h^T$ and
 $\nabla^2_{\zeta \zeta} h^T$ are as well  analytic on $\O^{s}(\s, \ \mu)$, and the
  norm  $ [h^T]^{s,\b}_{\s, \mu,\D}$
 satisfies the same estimate, modulo another constant factor, for any $0<\mu'\le\mu$.

 Finally, the estimate for $[h-h^T]^{s,\b}_{\s,\mu',\D}$ when $\mu/2\le\mu'\le\mu$, 
 with a suitable constant $C$, follows from the estimate for $[h^T]^{s,\b}_{\s,\mu,\D}$
 since $[h-h^T]^{s,\b}_{\s,\mu',\D}\le [h^T]^{s,\b}_{\s,\mu,\D} +
 [h]^{s,\b}_{\s,\mu,\D}\,$.
 \endproof

\subsection{Poisson brackets and flows}
The Poisson brackets of functions 
is defined by
\be\label{poisson}
\{f,g\}=\nabla_rf\cdot\nabla_\theta g- \nabla_\theta f\cdot\nabla_r g+\langle J\nabla_\zeta f,\nabla_\zeta g\rangle\,.
\ee

\begin{lemma}\label{lemma-poisson} Let $s\geq 1$.
Let $f\in\Tc^{s,\beta+}(\D,\s,\mu)$ and $g\in \Tc^{s,\beta}(\D,\s,\mu)$ be two jet functions then for any $0<\s'<\s$  we have $\{f,g\}\in \Tc^{s,\beta}(\D,\s',\mu)$ and
$$[\{f,g\}]_{\s',\mu,\D}^{s,\b}\leq C(\s-\s')^{-1}\mu^{-2}[f]_{\s,\mu,\D}^{s,\b+}[g]_{\s,\mu,\D}^{s,\b}.$$
\end{lemma}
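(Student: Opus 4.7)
The strategy is to exploit the jet structure. Since both $f$ and $g$ are polynomials of degree $\leq 2$ in $(r,\zeta)$, linear in $r$, the Poisson bracket $\{f,g\}$ is again of this form. Writing $\nabla_\zeta f=f_\zeta(\theta,\r)+f_{\zeta\zeta}(\theta,\r)\zeta$ and analogously for $g$, one expands the three groups $\nabla_r f\cdot\nabla_\theta g$, $-\nabla_\theta f\cdot\nabla_r g$, $\langle J\nabla_\zeta f,\nabla_\zeta g\rangle$ into an explicit finite list of summands whose coefficients are products of the components $f_\theta,f_r,f_\zeta,f_{\zeta\zeta}$ and $g_\theta,g_r,g_\zeta,g_{\zeta\zeta}$, each possibly carrying one $\theta$-derivative.

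Each individual component is controlled by \eqref{norm2} and its $\b+$-analogue; for instance $|f_{\zeta\zeta}|_{\b+}\leq\mu^{-2}[f]^{s,\b+}_{\s,\mu,\D}$ and $|g_{\zeta\zeta}|_\b\leq\mu^{-2}[g]^{s,\b}_{\s,\mu,\D}$, and every $\theta$-derivative costs a factor $(\s-\s')^{-1}$ via the Cauchy estimate on $\T^n_\s$. The four ingredients of $[\{f,g\}]^{s,\b}_{\s',\mu,\D}$ are then examined separately: the sup norm and the $\mu\|\nabla_\zeta\cdot\|_s$, $\mu|\nabla_\zeta\cdot|_\b$ contributions reduce to scalar inner products and matrix-vector products, which are handled by items (iii)--(vii) of Lemma \ref{product}; these items are designed to give exactly the weights demanded by the target norm.

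The decisive estimate is the Hessian bound $\mu^2|\nabla^2_\zeta\{f,g\}|_\b$. The quadratic-in-$\zeta$ piece of $\langle J\nabla_\zeta f,\nabla_\zeta g\rangle$ has Hessian essentially of the form $f_{\zeta\zeta}Jg_{\zeta\zeta}-g_{\zeta\zeta}Jf_{\zeta\zeta}$, and here item (i) of Lemma \ref{product} applies squarely: $|f_{\zeta\zeta}Jg_{\zeta\zeta}|_\b \leq C|f_{\zeta\zeta}|_{\b+}|g_{\zeta\zeta}|_\b \leq C\mu^{-4}[f]^{s,\b+}_{\s,\mu,\D}[g]^{s,\b}_{\s,\mu,\D}$. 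This is precisely why the asymmetric hypothesis $f\in\Tc^{s,\b+}$, $g\in\Tc^{s,\b}$ is forced on us: without the extra $(1+|w_a-w_b|)$-factor built into $|\cdot|_{\b+}$, the product of two $\M_\b$-matrices need not lie in $\M_\b$ at all. The remaining Hessian contributions are of the simpler type $f_r\partial_\theta g_{\zeta\zeta}$ (a scalar against an $\M_\b$-matrix) or $f_{\zeta\zeta}\partial_\theta g_r$, bounded directly by Cauchy in $\theta$.

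The $\partial_\r$-estimates follow from the Leibniz rule $\partial_\r\{f,g\}=\{\partial_\r f,g\}+\{f,\partial_\r g\}$ applied within the same scheme. Summing the finitely many contributions and multiplying by the prescribed $\mu$-weights yields the target bound $C(\s-\s')^{-1}\mu^{-2}[f]^{s,\b+}_{\s,\mu,\D}[g]^{s,\b}_{\s,\mu,\D}$. The main obstacle is not any one of these inequalities in isolation but the bookkeeping: for each summand and each of the four norms entering $[\cdot]^{s,\b}$, one has to pair the correct clause of Lemma \ref{product} with the right component, never inadvertently asking the weaker space $\Tc^{s,\b}$ carrying $g$ to deliver the $\b+$-regularity that only $f$ provides.
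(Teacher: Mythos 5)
Your proposal is correct and follows essentially the same route as the paper: split the bracket into the $r$--$\theta$ terms (handled by Cauchy estimates in $\theta$ and \eqref{norm2}) and the term $\langle J\nabla_\zeta f,\nabla_\zeta g\rangle$, whose Hessian $g_{\zeta\zeta}Jf_{\zeta\zeta}-f_{\zeta\zeta}Jg_{\zeta\zeta}$ is exactly where Lemma \ref{product}(i) and the asymmetric $\b+/\b$ hypothesis are used, as you correctly emphasize. The only small imprecision is that the $\mu\|\nabla_\zeta\{f,g\}\|_s$ contribution is not covered by items (iii)--(vii) of Lemma \ref{product} (which give $L_\b$-type bounds); for that one uses the operator bound \eqref{norm3} on $f_{\zeta\zeta},g_{\zeta\zeta}$ as maps of $Y_s$, exactly as the paper does.
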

\proof
Let denotes by $h_1$, $h_2$, $h_3$ the three terms on the right hand side of \eqref{poisson}. Since $\nabla_rf(\theta,r,\zeta,\r)= f_r(\theta,\r)$ and $\nabla_rg(\theta,r,\zeta,\r)= g_r(\theta,\r)$ are independent of $r$ and $\zeta$,  the control of  $h_1$ and $h_2$ is straightforward by Cauchy estimates and \eqref{norm2}. \\
We focus on the third term in formula: $h_3=\langle J\nabla_\zeta f,\nabla_\zeta g\rangle$.
 As $\nabla_\zeta f=f_\zeta +f_{\zeta\zeta}\zeta$ and similar for $\nabla_\zeta g$, we have
$$
h_3= \langle  Jf_\zeta, g_\zeta \rangle-\langle  \zeta,f_{\zeta\zeta} J g_\zeta \rangle+\langle g_{\zeta\zeta} J f_\zeta, \zeta \rangle + \langle  g_{\zeta\zeta}J f_{\zeta\zeta}\zeta,\zeta \rangle.
$$
Using \eqref{norm2}, \eqref{norm3} and $\|\zeta\|_s\leq \mu$,
  we get
$$\ 
|h_3(x,\cdot)|\leq C\mu^{-2}[f]^{s}_{\s,\mu,\D}[g]^{s}_{\s,\mu,\D}\,,$$
for any $x\in\O(\s,\mu)$ and $\r\in\D$.\\ Since
$$
\nabla_\zeta h_3=-f_{\zeta\zeta}Jg_\zeta+g_{\zeta\zeta}J f_\zeta+g_{\zeta\zeta}Jf_{\zeta\zeta} \zeta-f_{\zeta\zeta}Jg_{\zeta\zeta} \zeta,$$
then, using Lemma \ref{product}, we get that for  $x\in \O^{s}(\s,\mu)$ with $s\geq1$ and $\r\in\D$
 $$|\nabla_\zeta h_3(x,\cdot)|_{\b}\leq C\mu^{-3}[f]^{s,\b+}_{\s,\mu,\D}[g]^{s,\b}_{\s,\mu,\D}\, .$$
On the other hand using again \eqref{norm3}, we deduce that for  $x\in \O^{s}(\s,\mu)$ and $\r\in\D$
$$\|\nabla_\zeta h_3(x,\cdot)\|_{s}\leq C\mu^{-3}[f]^{s}_{\s,\mu,\D}[g]^{s}_{\s,\mu,\D}\, .$$
Finally, as $\nabla^2h_3=g_{\zeta\zeta}Jf_{\zeta\zeta}-f_{\zeta\zeta}Jg_{\zeta\zeta} $,
 then, using again Lemma \ref{product} we get that for  $x\in \O^{s}(\s,\mu)$ and $\r\in\D$
   $$|\nabla^2h_3(x,\cdot)|_\b\leq C\mu^{-4}[f]^{s,\b+}_{\s,\mu,\D}[g]^{s,\b}_{\s,\mu,\D}\, .$$
\endproof

\subsection{Hamiltonian flows}

To any $C^1$-function $f$ on a domain
 $\O^s(\s,\mu)\times \D$ 
 we associate the hamiltonian equations 
 \be\label{hameq} \left\{\begin{array}{lll}
\dot r = &\nabla_\theta f(r,\theta,\zeta),\\
\dot \theta= &-\nabla_r f(r,\theta,\zeta),\\
\dot \zeta= &J\nabla_\zeta f(r,\theta,\zeta).
\end{array}\right.
\ee
and  denote by $\Phi^t_f\equiv \Phi^t$, $t\in\R$, the corresponding flow-maps (if they exist). 
Now let  $f\equiv f^T$ be  a jet-function
\be\label{f_eq}
f=f_\theta(\theta;\r)+f_r(\theta;\r)\cdot r+\langle f_\zeta(\theta;\r),\zeta\rangle+\frac 1 2 \langle f_{\zeta\zeta}(\theta;\r)\zeta,\zeta \rangle.
\ee
Then  Hamiltonian equations \eqref{hameq} take  the form\footnote{Here and below we often suppress the argument $\r$.}
\be\label{hameq-jet} \left\{\begin{array}{lll}
\dot r = &-\nabla_\theta f(r,\theta,\zeta),\\
\dot \theta= & f_r(\theta),\\
\dot \zeta= &J\left( f_\zeta (\theta)+f_{\zeta\zeta}(\theta)\zeta\right).
\end{array}\right.
\ee
Denote by $V_f=(V_f^r,V_f^\theta,V_f^\zeta)$ the corresponding vector field. It is analytic on any domain 
$\O^{s}(\sigma-2\eta,\mu-2\nu)=:\O_{2\eta, 2\nu}$, where $0<2\eta<\sigma$, $ 0< 2\nu<\mu$.  The
flow-maps $\Phi^t_f$ of $V_f$ on $\O_{2\eta, 2\nu}$ are analytic till they exist. We will study them  till they 
map  $\O_{2\eta, 2\nu}$  to  $\O_{\eta, \nu}$. \\
Assume that 
\be\label{hyp-f1}  
[f]^{s}_{\s,\mu,\D}\leq \frac 12\nu^2 \eta.
\ee
 Then for $x=(r,\theta,\zeta)\in\O_{2\eta, 2\nu}$ by the Cauchy estimate\footnote{Notice that the distance from $\O^{s}(\sigma-2\eta,\mu-2\nu)$ to $\p \O^{s}(\sigma,\mu)$ in the 
the $r$-direction is $4\nu\mu-4\nu^2>4\nu^2$.} and
\eqref{norm3}  we have 
\begin{equation*}
\begin{split}
|V_f^r|_{\C^n} &\le (2\eta)^{-1}\ff\le\nu^2 ,\\
|V_f^\theta|_{\C^n} &\le (4\nu^2)^{-1} \ff\le \eta,\\
\|V_f^\zeta\|_{s} &\le \big(\mu^{-1}+\mu^{-2}\mu\big)\ff\le\nu.
\end{split}
\end{equation*}
Noting that the distance from $\O_{2\eta,2\nu}$ to $\p \O_{\eta,\nu}$ in the 
the $r$-direction is $2\nu\mu-3\nu^2>\nu^2$, in the $\theta$-direction is $\eta$ and in the $\zeta$-direction is
$\nu$, we see that the flow-maps 
\be\label{flow}
\Phi^t_f:  \O^{s}(\sigma-2\eta,\mu-2\nu)\to \O^{s}(\sigma-\eta,\mu-\nu),\qquad 0\le t\le1,
\ee
are well defined and analytic. 

For  $x \in\O_{2s, 2\nu}$ denote $\Phi^t_f(x)=(r(t),\theta(t),\zeta(t))$. Since $V_f^\theta$ is independent from $r$ and 
$\zeta$, then $\theta(t)=K(\theta;t)$, where $K$ is analytic in both arguments.
As $V_f^\zeta=Jf_\zeta+Jf_{\zeta\zeta}\zeta$, where the non autonomous 
linear operator $Jf_{\zeta\zeta}(\theta(t))$ is bounded in the space $Y^c_{s}$ and both the operator and the curve
$Jf_\zeta(\theta(t))$ analytically depend on $\theta$ (through $\theta(t)=K(\theta;t)$), 
then $\zeta(t)=T(\theta,t)+U(\theta;t)\zeta$, where
$U(\theta;t)$ is a bounded linear operator, both $U$ and $T$ analytic in $\theta$.  Similar since $V_f^\zeta$ is a 
quadratic polynomial in $\zeta$ and an affine function of $r$, then $r(t)=L(\theta,\zeta;t)+S(\theta;t)r$, where $S$
is an $n\times n$ matrix and $L$ is a quadratic polynomial in $\zeta$, both analytic in  $\theta$.

The vectorfield $V_f$ is real for real argument, and so are its flow-maps. Since the vector-field is hamiltonian, 
then the flow-maps are symplectic (e.g., see \cite{Kuk2}). We have proven

\begin{lemma}\label{l.flow} 
Let  $0<2\eta<\s$, $0<2\nu<\mu$ and 
 $f =f^T\in\Tc^s(\s,\mu,\D)$ satisfies 
\eqref{hyp-f1}. Then for   $0\le t\le1$ the
 flow maps  $\Phi^t_f$ of  equation \eqref{hameq-jet} define analytical mappings \eqref{flow}
and define
 symplectomorphisms from $\O^{s}(\s-2\eta,\mu-2\nu)$ to $\O^{s}(\s-\eta,\mu-\nu)$. 
 They have the form 
\be\label{Phi1} 
\Phi_f^t:
\left(\begin{array}{lll}r\\ \theta \\ \zeta \end{array}\right)
\to
\left(\begin{array}{lll}
L(\theta,\zeta;t)+S(  \theta;t)r\\
K( \theta;t)\\
T( \theta;t)+U( \theta;t)\zeta
\end{array}\right),
\ee
where  $L(\theta,\zeta;t)$ is  quadratic in $\zeta$,  while 
$U( \theta;t)$ and $S( \theta;t)$ are  bounded linear operators in corresponding spaces. 
\end{lemma}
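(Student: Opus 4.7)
The plan is to bound the Hamiltonian vector field
$V_f=(V_f^r,V_f^\theta,V_f^\zeta)=\bigl(-\nabla_\theta f,\; f_r(\theta),\; J f_\zeta(\theta)+J f_{\zeta\zeta}(\theta)\zeta\bigr)$
uniformly on the shrunken domain $\O_{2\eta,2\nu}:=\O^s(\s-2\eta,\mu-2\nu)$ and then to argue that during time $t\in[0,1]$ the integral curves cannot escape $\O_{\eta,\nu}$. First, a Cauchy estimate in the $\theta$-variable with radius $2\eta$ gives $|V_f^r|\le(2\eta)^{-1}\ff$, while \eqref{norm2} gives directly $|V_f^\theta|=|f_r|\le\mu^{-2}\ff$, and \eqref{norm2}--\eqref{norm3} combined with the bound $\|\zeta\|_s<\mu$ yield $\|V_f^\zeta\|_s\le(\mu^{-1}+\mu^{-2}\mu)\ff$. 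Inserting the smallness hypothesis $\ff\le\tfrac12\nu^2\eta$ collapses these to
\ban
|V_f^r|\le\nu^2,\qquad |V_f^\theta|\le\eta,\qquad \|V_f^\zeta\|_s\le\nu.
\ean

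Next, I compare these sizes with the widths of the collar separating $\O_{2\eta,2\nu}$ from $\O_{\eta,\nu}$: the gap is $\eta$ in $\theta$, exactly $\nu$ in $\zeta$, and $\mu^2-(\mu-\nu)^2-\bigl(\mu^2-(\mu-2\nu)^2\bigr)=2\nu\mu-3\nu^2>\nu^2$ in $r$ (using $2\nu<\mu$). Since $\int_0^t|V_f|$ is dominated by each corresponding gap for $t\le1$, a standard Picard/continuation argument produces an analytic flow satisfying $\Phi_f^t(\O_{2\eta,2\nu})\subset\O_{\eta,\nu}$. The symplectic character is automatic because $V_f$ is Hamiltonian; the reality of $\Phi_f^t$ on real arguments follows from the reality of $V_f$.

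It then remains to derive the explicit triangular form \eqref{Phi1}, which I propose to do by exploiting the jet structure of $f$ term by term. The $\theta$-equation $\dot\theta=f_r(\theta)$ is autonomous and gives $\theta(t)=K(\theta_0;t)$ with $K$ jointly analytic. Substituting into the $\zeta$-equation yields a non-autonomous \emph{affine} linear ODE $\dot\zeta=Jf_\zeta(K(\theta_0;t))+Jf_{\zeta\zeta}(K(\theta_0;t))\zeta$ whose coefficient is a bounded operator on $Y_s$ by \eqref{norm3}; variation of constants produces $\zeta(t)=T(\theta_0;t)+U(\theta_0;t)\zeta_0$ with $U$ a bounded linear operator and $U,T$ analytic in $\theta_0$. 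Finally, $\dot r=-\nabla_\theta f$ is affine in $r$ with driving term a quadratic polynomial in $\zeta$ and analytic in $\theta$; substituting the expressions for $\theta(t),\zeta(t)$ and integrating gives $r(t)=L(\theta_0,\zeta_0;t)+S(\theta_0;t)r_0$ with $L$ quadratic in $\zeta_0$.

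I expect the only delicate point to be the bound $\|V_f^\zeta\|_s\le\nu$, which crucially requires the linear-operator estimate \eqref{norm3} for $f_{\zeta\zeta}$ acting on $Y_s$ (rather than just the HS-block norm encoded in $|f_{\zeta\zeta}|_\b$), together with $\|\zeta\|_s<\mu$; once that is in hand, everything else is a routine combination of Cauchy estimates and the standard ODE argument for analytic Hamiltonian flows.
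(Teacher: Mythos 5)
Your proposal is correct and follows essentially the same route as the paper: Cauchy/jet estimates on the three components of $V_f$ compared against the gaps $\eta$, $\nu$ and $2\nu\mu-3\nu^2>\nu^2$ separating $\O^{s}(\s-2\eta,\mu-2\nu)$ from $\O^{s}(\s-\eta,\mu-\nu)$, then the triangular form \eqref{Phi1} read off from the jet structure (autonomous $\theta$-equation, non-autonomous affine $\zeta$-equation, $r$-equation affine in $r$ and quadratic in $\zeta$), with symplecticity and reality coming for free from the Hamiltonian, real vector field. Only cosmetic differences: the $r$-gap should be written $(\mu-\nu)^2-(\mu-2\nu)^2=2\nu\mu-3\nu^2$ (the expression you wrote has the opposite sign, though you state the correct value), and your bound $|V_f^\theta|\le\mu^{-2}\ff$ via \eqref{norm2} is a harmless variant of the paper's Cauchy estimate in the $r$-direction.
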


Our next result specifies the flow-mappings $\Phi_f^t$ and their representation  \eqref{Phi1} when $f\in \Tbp$:

\begin{lemma}\label{changevar}  
Let  $0<2\eta<\s\leq 1$, $0<2\nu<\mu\leq 1$ and 
 $f =f^T\in\Tc^{s,\b+}(\s,\mu,\D)$ satisfies 
\be\label{hyp-f}[f]^{s,\b+}_{\s,\mu,\D}\leq \frac 12 \nu^2 \eta
\ee  Then:\\
1)   Mapping
$L$ is analytic in $(\theta ,\zeta) \in  \T^{\sigma-2\eta}\times \O_\mu(Y_s)$. 
 Mappings $K,T$ and operators  $S$ and $U$ analytically depend on  $\theta\in \T^{\sigma-2\eta}$; their norms 
 and operator-norms  satisfy
\begin{equation}
\begin{split}\label{4.55}
\|S( \theta;t)\|_{\L(\C^n,\C^n)},  \|{}^tU( \theta;t)\|_{\L(Y_s, Y_s)},    \| U( \theta;t)\|_{\L(Y_s, Y_s)}, 
   | U( \theta;t) |_{\b+}\le2,
\end{split}\end{equation}
while for any component $L^j$ of $L$ and any $(\theta,r,\zeta)\in \O^{s}(\s-2\eta,\mu-2\nu)$  we have 
\begin{equation}
\begin{split}\label{4.56}
\|\nabla_\zeta L^j (\theta,\zeta;t) \|_{s} &\le C \eta^{-1}\mu^{-1} \ff ,\\
|\nabla_\zeta L^j (\theta,\zeta;t) |_{\b+} &\le C \eta^{-1}\mu^{-1} \fbp\\
 |\nabla^2_\zeta L^j (\theta,\zeta;t) |_{\b+} &\le C \eta^{-1}\mu^{-2} \fbp.
\end{split}\end{equation}
2) 
The flow maps $\Phi^t_f$ analytically extend to  mappings \\  
$
\C^n\times\T^n_{\s-2\eta}\times Y_{s}\ni x^0= (r^0,\theta^0,\zeta^0)\mapsto x(t) \in
 \C^n\times \T^n_{\s}\times Y_{s}$, \\ $ x(t)= (r(t),\theta(t),\zeta(t)) $, 
which satisfy
\begin{align}\begin{split}\label{estim-phi1}
&|r(t) -r^0|\leq  4\eta^{-1}\big(1+\mu^{-2}|r^0|+ \mu^{-2}||\zeta^0||^2_{s}\big)\ff,\\
&|\theta(t)-\theta^0|\leq \mu^{-2}\ff,\\
&\|\zeta(t)-\zeta^0\|_{s} \leq 
\left( \mu^{-2}  \|\zeta^0\|_{s}+1\right)\ff,\\
&|\zeta(t)-\zeta^0|_{\b+} \leq 
\left( \mu^{-2}   \|\zeta^0\|_{s}+1\right)\fbp
\end{split}\end{align}
Moreover, $\r$-derivative  of the mapping $x^0\mapsto x(t)$ 
 satisfies  the same estimates as  the increments $x(t)-x^0$. 
 \end{lemma}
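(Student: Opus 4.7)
The plan is to read off everything from the explicit factorised form of the flow provided by Lemma \ref{l.flow}, and then derive each of the quantitative bounds \eqref{4.55}, \eqref{4.56}, \eqref{estim-phi1} by integrating in time the ODE that the corresponding factor satisfies, using in each step Lemma \ref{product} to keep control of the $\b$-- and $\b+$--norms. Since $f$ is a jet function, the system \eqref{hameq-jet} has a triangular structure: $\dot\theta=f_r(\theta)$ is autonomous in $\theta$; once $K(\theta;t)=\theta(t)$ is known, the $\zeta$-equation is affine linear in $\zeta$; and once both are known, the $r$-equation is affine linear in $r$ with a source that is a quadratic polynomial in $\zeta^{0}$.

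First I would treat $K$: direct integration of $\dot K=f_r(K)$, combined with $|f_r|\le\mu^{-2}\ff$ coming from \eqref{norm2}, immediately gives the bound $|K(\theta;t)-\theta|\le\mu^{-2}\ff$ and the analyticity of $K$ on $\T^{n}_{\s-2\eta}$. Next I handle $U$ and $T$, which satisfy
\ben
\dot U=Jf_{\zeta\zeta}(K(\theta;t))\,U,\qquad U(0)=\mathrm{Id},
\een
and $\dot T=Jf_{\zeta\zeta}(K)T+Jf_\zeta(K)$, $T(0)=0$. Picard iteration for $U$ using $\|f_{\zeta\zeta}\|_{\L(Y_s,Y_s)}\le\mu^{-2}\ff$ from \eqref{norm3} gives, via Gronwall and \eqref{hyp-f}, $\|U\|_{\L(Y_s,Y_s)}\le2$; the same argument applied to $\dot{}^{t}U=-{}^{t}U\,{}^{t}(Jf_{\zeta\zeta})$ gives the transposed bound. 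To propagate the $|\cdot|_{\b+}$ norm, I iterate the Duhamel series for $V:=U-\mathrm{Id}$ (which, unlike $\mathrm{Id}$, \emph{is} in $\M_\b^{+}$), using $|f_{\zeta\zeta}|_{\b+}\le\mu^{-2}\fbp$ and Lemma \ref{product}(ii) at each step; \eqref{hyp-f} makes the geometric series converge with sum $\le1$, yielding the claimed bound $|U|_{\b+}\le2$ in the adapted sense. For $T$, Duhamel $T(t)=\int_0^{t}U(t)U(\tau)^{-1}Jf_\zeta(K(\tau))\,d\tau$ combined with $\|f_\zeta\|_s\le\mu^{-1}\ff$, $|f_\zeta|_{\b+}\le\mu^{-1}\fbp$ and parts (vi)--(vii) of Lemma \ref{product} provides the bounds on $T$ in both $Y_s$ and $L_{\b+}$.

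Then I would compute $r(t)$: substituting $\zeta(t)=T(\theta;t)+U(\theta;t)\zeta^{0}$ into
\ben
\dot r=-\nabla_\theta f_\theta-(\nabla_\theta f_r)r-\langle\nabla_\theta f_\zeta,\zeta(t)\rangle-\tfrac12\langle(\nabla_\theta f_{\zeta\zeta})\zeta(t),\zeta(t)\rangle
\een
and separating the $r^{0}$-linear part from the rest produces the decomposition $r(t)=L(\theta,\zeta^{0};t)+S(\theta;t)r^{0}$, where $\dot S=-(\nabla_\theta f_r)(K)S$, so that the Cauchy estimate in $\theta$ (producing the $\eta^{-1}$ factor) and Gronwall again yield $\|S\|_{\L(\C^n,\C^n)}\le2$. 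The term $L$ is explicitly quadratic in $\zeta^{0}$; its gradient and Hessian are obtained by differentiating its three pieces and are bounded using $\|\nabla_\theta f_\zeta\|_s, |\nabla_\theta f_\zeta|_{\b+}$ (both carrying the Cauchy factor $\eta^{-1}$), $|\nabla_\theta f_{\zeta\zeta}|_{\b+}$, the bounds on $T$ and $U$ already obtained, and parts (vi)--(viii) of Lemma \ref{product}, which give precisely the estimates \eqref{4.56}. Finally, the increment estimates \eqref{estim-phi1} follow by subtracting the initial data and inserting the bounds on $K-\mathrm{Id}$, $T$, $U-\mathrm{Id}$, $L$ and $S-\mathrm{Id}$; the $\r$-derivative bounds come from differentiating the integral equations in the Whitney sense and using that by definition $\partial_\r f$ obeys the same norm bound as $f$.

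The main obstacle I expect is the $|\cdot|_{\b+}$ bound on $U$: the identity operator does not belong to $\M_\b^{+}$, so the Picard scheme cannot be run naively starting from $U=\mathrm{Id}+\cdots$. One must instead work throughout with the deviation $V=U-\mathrm{Id}$, and then exploit the off-diagonal decay factor $(1+|w_a-w_b|)^{-1}$ appearing in the convolution estimate of Lemma \ref{product}(ii) to absorb the accumulating products of $Jf_{\zeta\zeta}$-factors. The other estimates, including the extraction of the $\eta^{-1}$ factor through Cauchy and the symmetry arguments for ${}^{t}U$, are essentially bookkeeping once this step is under control.
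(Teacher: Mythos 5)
Your proposal is correct and follows essentially the same route as the paper: the paper also exploits the triangular structure of \eqref{hameq-jet}, solves the affine $\zeta$- and $r$-equations by the iterated (Picard/Dyson) integral series, writes $U=I+B^\infty$ and $S=I-\Lambda^\infty$ with $L=-\alpha^\infty$, extracts the $\eta^{-1}$ factors by Cauchy estimates in $\theta$, and propagates the $|\cdot|_{\b+}$ bounds through Lemma \ref{product}. Your observation that the identity is not in $\M_\b^+$ and that one must estimate the deviation $U-\mathrm{Id}$ is precisely how the paper proceeds (the $\b+$ bound is stated for $B^\infty$), so no genuinely different ingredient is involved.
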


\proof
Consider  the  equation for $\zeta(t)$ in \eqref{hameq-jet}:
 \begin{equation}\label{000}
 \dot \zeta(t)= a(t)+ B(t) \zeta(t),\quad \zeta(0)=\zeta^0 \in \O_{\mu-2\nu}(Y_s),
 \end{equation}
 where $a(t)=Jf_\zeta(\theta(t))$ is an analytic curve $[0,1]\to Y_\ga$ and 
 $B(t)=Jf_{\zeta\zeta}(\theta(t))$ is an analytic  curve $[0,1]\to \M$. 
  Both analytically depend on $\theta^0$. 
 By the hypotheses and using \eqref{Cauchy}
 \be\label{aB} ||a(t)||_s\leq \mu^{-1}\ff  ,
 \quad
 \| B\|_{\L(Y_s, Y_s)}\leq \mu^{-2}\ff\leq \frac 12 \nu\leq \frac 12 .\ee
    
 By re-writing \eqref{000} in  the integral form 
 $\zeta(t)=\zeta^0 +\int_0^t (a(t')+B(t')\zeta(t'))\dd t'$ and iterating this relation, we get that 
\be\label{zeta}
 \zeta(t)=a^\infty(t)+(I+B^\infty(t))\zeta^0,
 \ee
 where
$$
 a^{\infty}(t)=\sum_{k\geq 1}\int_{0}^{t}\int_{0}^{t_{1}}\cdots \int_{0}^{t_{k-1}} \prod_{j=1}^{k-1}B(t_{j})a(t_{k})\text{d}t_{k}  \cdots \text{d}t_{2}\,\text{d}t_{1},
$$
 and
   $$
  B^{\infty}(t)=\sum_{k\geq 1}\int_{0}^{t}\int_{0}^{t_{1}}\cdots \int_{0}^{t_{k-1}} \prod_{j=1}^{k}B(t_{j})\text{d}t_{k}  \cdots \text{d}t_{2}\,\text{d}t_{1}.
$$
Due to   \eqref{hyp-f}, \eqref{aB}, for each $k$ and for $0\le t_k\le\dots t_1\le1$ 
 we have that
$$
\|B(t_1)\dots B(t_k)\|_{\L(Y_s, Y_s)}\le (\frac 12)^{k-1}\mu^{-2} [f]^{\ga}_{\s,\mu,\D} .
$$
By this relation and  \eqref{aB} 
  we get that $a^\infty$ and $B^\infty$ are well defined for $t\in[0,1]$ 
and satisfy 
 \begin{align}
 \begin{split}\label{aB-inf}  
   \|B^\infty(t)\|_{\L(Y_s, Y_s)}&\leq  \mu^{-2}\ff ,
 \\
 \|a^\infty(t)\|_{s}& \leq  \mu^{-3} ( \ff )^2 \leq \ff   .
  \end{split}\end{align}
Again,  the curves  $a^\infty$ and $B^\infty$ analytically depend on $\theta^0$. 
 Inserting \eqref{aB-inf}  in \eqref{zeta}  we get that $\zeta=\zeta(t)$ satisfies 
  \eqref{estim-phi1}${}_3$.   \\
On the other hand  for all $t\in[0,1]$, $B\in \M_\b^+$ and 
$$|B(t)|_{\b+}\leq\mu^{-2}\fbp.$$
Therefore using Lemma \ref{product} we get
 \begin{align}
 \begin{split}\label{aB-b}  
   |B^\infty(t)|_{\b+}&\leq  \mu^{-2}\fbp ,
 \\
 |a^\infty(t)|_{\b+}& \leq  \mu^{-3} ( \fbp )^2  \leq \fbp  .
  \end{split}\end{align}
 Inserting \eqref{aB-b}  in \eqref{zeta}  and using again Lemma \ref{product}  we get that $\zeta=\zeta(t)$ satisfies 
  \eqref{estim-phi1}${}_4$.   
  Since in \eqref{Phi1} $U( \theta;t)=I+B^\infty(t)$, then the estimates on $U$ in \eqref{4.55}  follow from \eqref{aB-inf}. 
  \smallskip

 Now  consider  equation for $r(t)$:
 $$
 \dot r(t)= -\alpha(t)-\Lambda(t) r(t),\quad r(0)=r^{0}\in\O_{(\mu-2\nu)^2}(\C^n) 
 $$
 where $ \Lambda (t)= \nabla_\theta f_r(\theta(t))$ and
 \be \label{alpha}
 \alpha(t)= \nabla_\theta f(\theta(t))+\langle \nabla_\theta f_\zeta(\theta(t)),\zeta(t)\rangle+\frac12 \langle \nabla_\theta f_{\zeta\zeta}(\theta(t))\zeta(t),\zeta(t) \rangle.\ee
 The matrix-curve $\Lambda(t)$ and the vector-curve $\alpha(t)$ analytically depend on $\theta^0\in T^n_{\sigma-2\eta}$. Besides, $\alpha(t)$ analytically depends on $\zeta^0\in Y_s$, 
 while $\Lambda$ is  $\zeta^0$-independent.
 
 By the Cauchy estimate and \eqref{hyp-f}, for any $\theta(t)\in T^n_{\sigma-\eta}$ we have 
 \be\begin{split}
 \label{l-la}
&|\Lambda(t)|_{\L(\C^n,\C^n)}\le \eta^{-1}\mu^{-2}\ff\le \frac 12,\\
&|\alpha(t)|\le 2\eta^{-1}  \ff (1+\mu^{-1}\|\zeta^0\|_{s}+\mu^{-2}\|\zeta^0\|^2_{s})
\end{split}
 \ee
 where for the second estimate we used that $\|\zeta(t)-\zeta^0\|_{s}\leq 1+\|\zeta^0\|_{s}$.\\
 Since $\nabla_{\zeta(t)}\alpha(t)=\nabla_\theta f_\zeta(\theta(t)) + 
 \nabla_\theta f_{\zeta\zeta}(\theta(t))\zeta(t)$ and
 $\nabla_{\zeta_0}={}^tU(\theta;t)\nabla_{\zeta}$,   then using \eqref{4.55} we obtain
 \be\label{l-la1}
 \|\nabla_{\zeta^0}\alpha(t)\|_{s}\le 4\eta^{-1}\mu^{-1}\ff(1+\mu^{-1}\|\zeta^0\|_{s}),
 \ee
 and using Lemma \ref{product}
 \be\label{l-la1b}
 |\nabla_{\zeta^0}\alpha(t)|_{b+}\le 4\eta^{-1}\mu^{-1}\fbp(1+\mu^{-1}\|\zeta^0\|_{s}).
 \ee
 Since 
 $\nabla^2_{\zeta^0}\alpha(t) = {}^tU
 \nabla^2_{\theta(t)} \alpha(t) U={}^tU  \nabla_\theta   f_{\zeta\zeta}(\theta(t))U$,
 then  due to  \eqref{4.55} 
  \be\label{l-la2}
 |\nabla^2_{\zeta^0}\alpha(t)|_{s}\le4
  \eta^{-1}\mu^{-2}
  \ff,
 \ee
 while  due to \eqref{4.55} and Lemma \ref{product}
  \be\label{l-la3}
 |\nabla^2_{\zeta^0}\alpha(t)|_{\b+}\le4
  \eta^{-1}\mu^{-2}\fbp .
 \ee
We proceed as for the $\zeta$-equation to derive 
\be\label{r}
r(t)=-\alpha^\infty(t)+(1-\Lambda^\infty(t))r^0,
 \ee
 where
\be\label{alpha-inf}
\alpha^{\infty}(t)=\sum_{k\geq 1}\int_{0}^{t}\int_{0}^{t_{1}}\cdots \int_{0}^{t_{k-1}} \prod_{j=1}^{k-1}\Lambda(t_{j})\alpha(t_{k})\text{d}t_{k}  \cdots \text{d}t_{2}\,\text{d}t_{1},
\ee
 and
\be\label{Lambda-inf}
  \Lambda^{\infty}(t)=\sum_{k\geq 1}\int_{0}^{t}\int_{0}^{t_{1}}\cdots \int_{0}^{t_{k-1}} \prod_{j=1}^{k}\Lambda(t_{j})\text{d}t_{k}  \cdots \text{d}t_{2}\,\text{d}t_{1}.
\ee
Using \eqref{l-la} we get that 
\begin{align*}
|\Lambda^\infty(t)| _{\L(\C^n\times\C^n)}&\leq  \frac 12,  \\
 |\alpha^\infty(t)|_{\C^n}&\leq  \frac 12\eta^{-1}\big( 1+\mu^{-2} \|\zeta^0\|^2_{s}\big)\ff.
 \end{align*}
  Since in \eqref{Phi1} $S(\theta;1)=I- \Lambda^\infty(t)$, then the first estimate in \eqref{4.55} follows.
 Since $\Lambda^\infty(t)$ in \eqref{r} is $\zeta^0$-independent, then $L(\theta,\zeta;t)=-
 \alpha^\infty(t)$. This is a quadratic in $\zeta^0$ expression, and the estimates \eqref{4.56}
 follow from \eqref{l-la1}--\eqref{l-la3} and in view of the estimate for $\Lambda^\infty$  above. \\
  Finally using the estimates for $\Lambda^\infty$ and $\alpha^\infty$ we get from \eqref{r}
that $r=r(t)$ satisfies \eqref{estim-phi1}${}_1$.  \endproof

Next we study how the flow-maps $\Phi^t_f$ transform  functions from  $ \Tb$. 

 \begin{lemma}\label{composition}
  Let $0<2\eta<\s\leq 1$, $0<2\nu<\mu\leq 1$. Assume that  
 $f =f^T\in\Tbp$ satisfies \eqref{hyp-f}.
Let $h\in \Tb$ and denote for $0\leq t\leq 1$ 
 $$h_t(x;\r)=h(\Phi^t_f(x;\r);\r).$$ Then 
 $h_t\in \Tc^{s,\b}(\s-2\eta,\mu-2\nu,\D)$  and
$$
[h_t]^{s,\b}_{\s-2\eta,\mu-2\nu,\D} 
\leq C \frac{\mu}{\nu}\hhh
$$
where $C$ is an absolute constant.
\end{lemma}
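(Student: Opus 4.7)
\proof[Proof proposal]
The plan is to write $h_t(x)=h(\Phi_f^t(x))$, with $\Phi_f^t$ given by the explicit triangular form \eqref{Phi1} of Lemma \ref{l.flow}, and estimate each component of $[h_t]^{s,\b}_{\s-2\eta,\mu-2\nu,\D}$ by chain rule, using the bounds of Lemma \ref{changevar} on $L,K,T,S,U$ together with the smallness hypothesis \eqref{hyp-f} on $f$ and Cauchy estimates to control the second derivatives of $h$ that do not appear in the norm $\hhh$.

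First, since $\Phi_f^t$ maps $\O^s(\s-2\eta,\mu-2\nu)$ into $\O^s(\s-\eta,\mu-\nu)\subset\O^s(\s,\mu)$, the bound $|h_t(x,\r)|\le\hhh$ is immediate, and the same holds after differentiating once in $\r$ (using that, by Lemma \ref{changevar}, $\partial_\r\Phi_f^t$ obeys the same estimates as $\Phi_f^t-\operatorname{Id}$). Next, the chain rule yields
\ben
\nabla_\zeta h_t = U^T\nabla_\zeta h(\Phi_f^t)+(\nabla_\zeta L)^T\nabla_r h(\Phi_f^t).
\een
The first term is handled by $\|U^T\|_{\L(Y_s,Y_s)}\le 2$ and $|U|_{\b+}\le 2$ together with Lemma \ref{product}(iii)--(iv), giving bounds $\le C\mu^{-1}\hhh$ for both $\|\cdot\|_s$ and $|\cdot|_\b$. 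The second term uses $|\nabla_r h|\le\mu^{-2}[h]^s_{\s,\mu,\D}$ together with the estimates \eqref{4.56} on $\nabla_\zeta L^j$; using \eqref{hyp-f} (so that $\fbp\le\nu^2\eta/2$) these contributions are actually much smaller than $\mu^{-1}\hhh$. Multiplication by $(\mu-2\nu)\le\mu$ shows that the first-order-derivative part of the norm is bounded by $C\hhh$, so the $\mu/\nu$ factor need not yet appear.

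The main work is the Hessian:
\ben
\nabla_\zeta^2 h_t = U^T\nabla_{\zeta\zeta}^2 h\,U + U^T\nabla_{\zeta r}^2 h\,\nabla_\zeta L + (\nabla_\zeta L)^T\nabla_{r\zeta}^2 h\,U + (\nabla_\zeta L)^T\nabla_{rr}^2 h\,\nabla_\zeta L + (\nabla_\zeta^2 L)\cdot\nabla_r h.
\een
The leading block $U^T\nabla_{\zeta\zeta}^2 h\,U$ is estimated in $|\cdot|_\b$ using Lemma \ref{product}(i)--(ii) and the control of $|U|_{\b+}$, giving $\le C\mu^{-2}\hhh$, which after multiplication by $(\mu-2\nu)^2$ produces only a $C\hhh$ contribution. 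The last term is bounded via $|\nabla_\zeta^2 L^j|_{\b+}\le C\eta^{-1}\mu^{-2}\fbp$ from \eqref{4.56} combined with $|\nabla_r h|\le\mu^{-2}[h]^s$, and is again very small by \eqref{hyp-f}. The expected main obstacle is the three mixed terms involving $\nabla_{r\zeta}^2 h$ and $\nabla_{rr}^2 h$: none of these is directly controlled by $\hhh$, so one gains them by a Cauchy estimate in the $r$-direction from radius $\mu^2$ to $(\mu-2\nu)^2$, the gap being $\ge 2\mu\nu$; this turns $\mu^{-1}\hhh$ (resp.\ $\mu^{-2}\hhh$) into $\hhh/(2\mu^2\nu)$ (resp.\ $\hhh/(2\mu^3\nu)$). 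Combined with the estimates of Lemma \ref{changevar} on $\nabla_\zeta L$ and with the smallness of $\fbp$, and Lemma \ref{product}(i)--(ii) and (viii) to keep the products inside $\M_\b$, the bound $(\mu-2\nu)^2|\nabla_\zeta^2 h_t|_\b\le C(\mu/\nu)\hhh$ follows; here the single factor $1/\nu$ arises precisely from that one Cauchy step in $r$, which explains the shape $C\mu/\nu$ of the final constant.

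Finally, the $\r$-derivative of $h_t$ is handled by expanding
\ben
\partial_\r h_t = (\partial_\r h)(\Phi_f^t) + \nabla h(\Phi_f^t)\cdot \partial_\r\Phi_f^t,
\een
and analogously for $\nabla_\zeta\partial_\r h_t$ and $\nabla_\zeta^2\partial_\r h_t$; Lemma \ref{changevar} guarantees that $\partial_\r\Phi_f^t$ satisfies the same estimates as $\Phi_f^t-\operatorname{Id}$, so the same term-by-term argument reproduces the bound with $\hhh$ (which already includes the $\r$-derivative). Putting these estimates together yields the claimed inequality $[h_t]^{s,\b}_{\s-2\eta,\mu-2\nu,\D}\le C(\mu/\nu)\hhh$.
\endproof
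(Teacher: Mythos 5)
Your argument is correct and is essentially the paper's own proof: the chain rule through the triangular form of $\Phi^t_f$, the bounds of Lemma \ref{changevar} on $U$, $\nabla_\zeta L$ and $\nabla^2_\zeta L$, Lemma \ref{product} for the matrix products, and Cauchy estimates in the $r$-direction for the derivatives of $h$ that the norm does not control, with the $\mu/\nu$ loss located in the mixed $r$--$\zeta$ Hessian term, exactly as in the paper's decomposition \eqref{feo}. One harmless imprecision: for a general $h\in\Tb$ (not a jet) the bound $|\nabla_r h|\le\mu^{-2}[h]^{s}_{\s,\mu,\D}$ you invoke is not part of the norm and must itself be obtained by a Cauchy estimate in $r$ over the gap $2\mu\nu-\nu^2$ (the paper uses $|\partial h/\partial r_k|\le\tfrac1{3\nu^2}\hh$), which only changes constants and leaves your conclusion intact.
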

\proof
Let us write the flow-map $\Phi^t_f$ as 
$$x^0=(r^0,\theta^0,\zeta^0)\mapsto x(t)=(r(t),\theta(t),\zeta(t)).$$
By Proposition~\ref{changevar}, $h_t(x^0)$ is analytic in $x^0\in \O(\s-2\eta,\mu-2\nu)$. Clearly $|h_t(x^0,\cdot)|\leq \hhh$ for $x^0\in \O(\s-2s,\mu-2\nu)$ and $\r\in\D$. So it remains to estimate the gradient and  hessian of $h(x^0)$.

1) {\it Estimating the  gradient.} Since $\theta(t)$ does not depend on $\zeta^0$,  we have
$$
\frac{\partial h_t }{\partial \zeta^0}= 
\sum_{k=1}^n \frac{\partial h(x(t))}{\partial r_k}\ \frac{\partial r_k(t)}{\partial\zeta^0}+ \sum_{b\in\L} \frac{\partial h(x(t))}{\partial \zeta_b(t)}\ \frac{\partial \zeta_b(t)}{\partial\zeta^0}=\Sigma_1+\Sigma_2.
$$
 i) Since $x(t)\in \O(\s-\eta,\mu-\nu)$,  
 we get  by the Cauchy estimate that
 $$
\left|  \frac{\partial h(x(t))}{\partial r_k}\right|\leq   \frac 1{3\nu^2}\hh .$$
As $\nabla_{\zeta^0} r_k(t)$ was estimated in \eqref{4.56}, then using \eqref{hyp-f} we get 
\begin{align*}
\|\Sigma_1\|_{s} +|\Sigma_1|_{\b}&\le C \nu^{-2} \hhh \,\eta^{-1}\mu^{-1}  \fb\\
&\le C      \mu^{-1}  \hhh\,.
\end{align*}

ii) Noting that $\Sigma_2(r,\theta,\zeta)={}^tU(\theta;t)\nabla_\zeta h$, we get using \eqref{4.55}:
$$\|\Sigma_2\|_{s}+|\Sigma_2|_{\b} \le 4  \mu^{-1} \hhh.$$
Estimating similarly $\frac{\partial}{\partial \r}\frac{\partial h_t}{\partial \zeta}$ we see that for
 $x\in \O(\s-2\eta,\mu-2\nu)$
$$
\| \partial_\r \nabla_{\zeta^0} h_t \|_{s}+|\partial_\r \nabla_{\zeta^0} h_t|_{\b}\leq C\mu^{-1}\hhh.$$

2) {\it Estimating the hessian.}  Since $\theta(t)$ does not depend on $\zeta^0$ and since 
$\zeta(t)$ is affine in 
 $\zeta^0$, then 
 \begin{equation}
 \begin{split}
 \label{feo}
\frac{\partial^2 h_t}{\partial \zeta^0_a \partial \zeta^0_b}(x)&=
\frac{\partial^2 h(x(t))}{\partial \zeta\partial \zeta}  \frac{\partial \zeta(t)}{\partial \zeta^0_a}
\frac{\partial \zeta(t)}{\partial \zeta^0_b}
+
\frac{\partial^2 h(x(t))}{\partial  r^2}  
 \frac{\partial r(t)} {\partial \zeta^0_a}
\frac{\partial r(t)}{\partial \zeta^0_b} \\  
&+
\frac{\partial^2 h(x(t))}{\partial  r \partial\zeta}  
 \frac{\partial r(t)} {\partial \zeta^0_a}
\frac{\partial \zeta(t)}{\partial \zeta^0_b}
+
\frac{\partial h(x(t))}{\partial  r }  
 \frac{\partial^2 r(t)} {\partial \zeta^0_a  \partial \zeta^0_b}\\
& =: \Delta_1+\Delta_2+\Delta_3+\Delta_4.
\end{split}
\end{equation}

i) We have $|\partial^2 h/\partial \zeta_a\partial\zeta_b|_\b\le C\mu^{-2}\hhh$. 
Using this estimate jointly with \eqref{4.55} and Lemma \ref{product} we see that 
$$
|\Delta_1|_{\b}\le C\mu^{-2} \hhh.
$$

ii) Since for $x^0\in  \O^{s}(\s-2s,\mu-2\nu)$ by \eqref{4.56} we have 
$$
 |\nabla r|_{\b}\le C\eta^{-1}\mu^{-1}\fb\,,
$$
 and since by Cauchy estimate $|d_r^2h|\le C \nu^{-4}  \hhh$, we get using Lemma \ref{product}(viii) and \eqref{hyp-f}
$$
 |\Delta_2|_{\b}\le C   \nu^{-4}\hhh   \eta^{-2}\mu^{-2}(\fb)^2
 \le C\mu^{-2} \hhh\,.
$$

iii) For any $j$ we have by the Cauchy estimate that 
$| \frac{\p}{\p r_j}\nabla_\zeta h|_\b\le C \nu^{-3} \hhh$. Therefore by \eqref{4.55} and Lemma \ref{product}
$$
\Big|\sum_{a'} \frac{\partial^2 h}{\partial r_j \partial \zeta_{a'}} \frac{\partial\zeta_{a'}}{\partial \zeta^0_a}\Big|_{\b}
\le C   \nu^{-3} \hhh\, .
$$
Since 
$$ 
|\nabla_{\zeta^0}r_j|_{\b} \le C\eta^{-1}\mu^{-1}\fb\le C\nu^2\mu^{-1}
$$
by \eqref{4.56}, then using again Lemma \ref{product}(viii)  we find that 
$$
|\Delta_3|^\varkappa_{\ga'}\le C\nu^{-1}\mu^{-1} \hhh\,.
$$

iv) As $|\partial h/\partial r(x(t))|\le \nu^{-2}\hhh$ and 
$$
\left| \frac{\partial^2 r}{\partial\zeta^0_a \partial\zeta^0_b} \right|_{\b}
\le C\eta^{-1}\mu^{-2}\fb
$$
by \eqref{4.56}, then 
$$
|\Delta_4|_{\b}\le C\mu^{-2}   \hhh.
$$

The $\rho$-gradient of the hessian leads to estimates similar to the above. So
the lemma is proven.

\endproof

We summarize the results of this section into a proposition.

\begin{proposition}\label{Summarize}
Let  $0<\s'<\s\leq 1,$ $0<\mu'<\mu\leq 1$.There exists an absolute constant $C\geq 1$ such that
\begin{itemize}
\item[(i)]
if $f =f^T\in\Tb$ and 
\be\label{hyp-f'}  
[f]^{s,\b}_{\s,\mu,\D}\leq\\
 \frac1{2} (\mu-\mu')^2 (\s-\s'),
\ee
 then
 for  all $0\le t\le1$,  the Hamiltonian
 flow map  $\Phi^t_f$ is a $\Cc^1$-map 
$$\O^{s}(\s',\mu')\times\D \to\O^{s}(\s,\mu);$$
real holomorphic and symplectic for any fixed  $\rho\in \D$.
Moreover,
$$||\Phi^t_f(x,\cdot)-x||_{s,\D}\le 
C\bigg( \frac1{\s-\s'}+\frac{1}{\mu^2}\bigg) [f]^{s,\b}_{\s,\mu,\D}$$
for any $x\in \O^{s}(\s',\mu')$. 
 
\item[(ii)] if $f =f^T\in\Tbp$ and 
\be\label{hyp-f''}  
[f]^{s,\b+}_{\s,\mu,\D}\leq\\
 \frac1{2} (\mu-\mu')^2 (\s-\s'),
\ee
 then
 for all $0\leq t\leq 1$ and all $h\in \Tc^{s,\b}(\s,\mu,\D)$,
the function
$h_t(x;\r)=h(\Phi^t_f(x,\r);\r)$ belongs to $\Tc^{s,\b}(\s',\mu',\D)$
and
$$[h_t]^{s,\b}_{\s',\mu',\D} 
\leq C \frac{\mu}{(\mu-\mu')  }
 [h]^{s,\b}_{\s,\mu,\D}. $$
\end{itemize}

\end{proposition}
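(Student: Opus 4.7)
This proposition is a repackaging of Lemmas \ref{l.flow}, \ref{changevar} and \ref{composition} in terms of the two parameters $(\sigma',\mu')$ rather than the incremental parameters $(\eta,\nu)$. The plan is to set once and for all
$$\eta = \tfrac12 (\sigma-\sigma'),\qquad \nu = \tfrac12(\mu-\mu'),$$
so that $\sigma-2\eta = \sigma'$, $\mu-2\nu = \mu'$, and the smallness assumptions \eqref{hyp-f'} and \eqref{hyp-f''} become, up to a factor of $2$ absorbed in $C$, the hypotheses \eqref{hyp-f1} and \eqref{hyp-f} of the component lemmas. After that the work is almost entirely bookkeeping.

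For part (i), since $[f]^s_{\sigma,\mu,\D}\le [f]^{s,\b}_{\sigma,\mu,\D}$, assumption \eqref{hyp-f'} implies \eqref{hyp-f1}, so Lemma \ref{l.flow} directly supplies an analytic symplectic flow $\Phi^t_f: \O^s(\sigma',\mu')\to \O^s(\sigma-\eta,\mu-\nu)\subset \O^s(\sigma,\mu)$ for $0\le t\le 1$. The displacement estimate is then obtained by integrating over $t\in[0,1]$ the Cauchy bounds on the vector field $V_f$ that were already established in the proof of Lemma \ref{l.flow}, namely
$$|V_f^r|\le \eta^{-1}\ff,\qquad |V_f^\theta|\le \mu^{-2}\ff,\qquad \|V_f^\zeta\|_s\le C\mu^{-1}\ff,$$
and using $[f]^s\le [f]^{s,\b}$ together with $\eta^{-1}=2(\sigma-\sigma')^{-1}$. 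The $\rho$-derivative inherits the same bound by differentiating the integral form of \eqref{hameq} with respect to $\rho$ and applying Gronwall; this is why the norm $[\cdot]^{s,\b}_{\sigma,\mu,\D}$ was defined with a supremum over $j=0,1$.

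For part (ii), with the same choice of $\eta$ and $\nu$, the stronger hypothesis \eqref{hyp-f''} matches exactly \eqref{hyp-f}, so Lemma \ref{composition} applies and delivers $h_t\in\Tc^{s,\b}(\sigma',\mu',\D)$ together with
$$[h_t]^{s,\b}_{\sigma',\mu',\D}\le C\frac{\mu}{\nu}\,[h]^{s,\b}_{\sigma,\mu,\D}=\frac{2C\mu}{\mu-\mu'}\,[h]^{s,\b}_{\sigma,\mu,\D},$$
again absorbing the factor $2$ into $C$. Real-analyticity of $\Phi^t_f(\cdot,\rho)$, symplecticity, and Whitney-$\Cc^1$ dependence on $\rho$ all propagate automatically from the component lemmas.

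The only conceptual point to verify — and in my view the single substantive one — is that for (i) the weaker hypothesis $f\in\Tb$ (without the $+$) really does suffice. This works because the existence of the flow and the displacement estimate use only the pointwise control of $V_f$ coming from \eqref{norm2}--\eqref{norm3}, which depends on $[f]^s$ alone; the refined $L^+_\b$- and $\M^+_\b$-bounds of Lemma \ref{changevar} are needed only to propagate the $+$-structure forward, which is not required here. In contrast, for (ii) one genuinely needs $f\in\Tbp$, since Lemma \ref{lemma-poisson} — and hence the proof of Lemma \ref{composition} through its treatment of the Hessian — relies on the product rules of Lemma \ref{product} which demand one of the two factors to live in the $+$-class.
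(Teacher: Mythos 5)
Your proposal is correct and follows essentially the same route as the paper, whose proof is exactly the substitution $\eta=\tfrac12(\s-\s')$, $\nu=\tfrac12(\mu-\mu')$ followed by an appeal to Lemmas \ref{changevar} and \ref{composition} (your extra care in noting that part (i) only requires the $[f]^{s}$-based bounds of Lemma \ref{l.flow} and estimates \eqref{estim-phi1}${}_{1-3}$, not the $\b+$ structure, is a point the paper leaves implicit). The only caveat, shared by the paper itself, is that the numerical constant $\tfrac12$ in \eqref{hyp-f'}--\eqref{hyp-f''} does not literally reproduce the threshold $\tfrac12\nu^2\eta=\tfrac1{16}(\mu-\mu')^2(\s-\s')$ of \eqref{hyp-f1} and \eqref{hyp-f}, so the smallness constants must be read as adjustable absolute constants rather than absorbed into the conclusion's $C$.
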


\begin{proof} Take $\s'=\s-2s$ and $\mu'=\mu-2\nu$ and apply lemmas \ref{changevar} and \ref{composition}.
\end{proof}

\section{Homological equation}\label{shomo}
Let us first recall the KAM strategy. 
Let $h_0$ be the normal form   Hamiltonian given by \eqref{h0}
$$
h_0(r,\zeta,\r)=\langle \om_0(\r), r\rangle +\frac 1 2\langle \zeta, A_0(\r)\zeta\rangle$$
satisfying  Hypotheses~A1-A3. Let
 $f$ be a perturbation and 
 $$f^T=f_\theta+\langle f_r, r\rangle+\langle f_\zeta,\zeta\rangle+\frac 1 2 \langle f_{\zeta\zeta}\zeta,\zeta \rangle$$ 
  be its jet  (see \eqref{jet}).
 If $f^T$ was zero,  then $\{\zeta=r=0\}$ would be an invariant 
  $n$-dimensional torus for the Hamiltonian
$h_0+f$. In general we only know that $f$ is small, say $f=\O (\eps)$, and thus $f^T=\O (\eps)$. In order  to 
decrease  the error term 
 we search for a hamiltonian jet $S=S^T=\O (\eps)$ such that its  time-one flow-map
$\Phi_S=\Phi_S^1$ transforms the Hamiltonian $h_0+f$ to
$$
(h_0+f)\circ \Phi_S=h+f^+,
$$
where $h$ is a new normal form, $\eps$-close to $h_0$, and the new perturbation $f^+$ is such that its jet
is much smaller than $f^T$. More exactly, 
$$
h=h_0+\tilde h,\qquad
\tilde h=c(\r)+\langle \chi(\r),r\rangle+ \frac 1 2 \langle \zeta, B(\r)\zeta\rangle=\O(\eps), 
$$
and 
$\ 
\left(f^+\right)^T=\O (\eps^2).
$

As a consequence of the Hamiltonian structure we have (at least formally) that
$$(h_0+f)\circ \Phi_S= h_0+\{ h_0,S \}+f^T+ \O (\eps^2).$$
So to achieve the goal above 
we should  solve the {\it homological equation}:
\be \label{eq-homo}
\{ h_0,S \}=\tilde h-f^T+\O (\eps^2).
\ee
Repeating iteratively 
the same procedure with $h$ instead of $h_0$  etc., we will be forced to solve the homological equation, not
only for the normal form Hamiltonian \eqref{h0}, but for 
more general normal form Hamiltonians
\eqref{h} with $\om$ close to $\om_0$ and $A$ close to $A_0$ .

\medskip 

In this section we will consider a homological equation \eqref{eq-homo}  with $f$  in $\Tc^{s,\beta}(\D,\s,\mu)$ and we will construct a solution $S$  in  $\Tc^{\ga,\beta+}(\D,\s,\mu)$.

\subsection{Four components of the homological equation}\label{ss6.1}

Let $h$ be a  normal form Hamiltonian \eqref{h},
$$
h(r,\zeta,\r)=\langle \om(\r), r\rangle +\frac 1 2\langle \zeta, A(\r)\zeta\rangle\,,
$$
and let us write a jet-function  $S$ as 
$$
S(\theta,r,\zeta)=S_\theta(\theta)+\langle S_r(\theta), r\rangle+\langle S_\zeta(\theta),\zeta\rangle+
\frac 1 2 \langle S_{\zeta\zeta}(\theta)\zeta,\zeta \rangle.
$$ 
Therefore  the Poisson bracket of $h$ and $S$ 
 equals
\begin{equation*}
\begin{split}
\{h, S\}=
(\nabla_\theta\cdot\omega) S_\theta &+\langle (\nabla_\theta\cdot\omega) S_r,r\rangle +
\langle (\nabla_\theta\cdot\omega) S_\zeta, \zeta\rangle \\
&+
\frac12 \langle (\nabla_\theta\cdot\omega) S_{\zeta\zeta},\zeta\rangle 
-\langle AJ S_\zeta,\zeta\rangle  +\langle  S_{\zeta\zeta}JA\zeta,\zeta\rangle.
\end{split}
\end{equation*}
Accordingly the  homological equation \eqref{eq-homo} with $h_0$ replaced by $h$ 
 decomposes into four linear equations. The first two are
\begin{align}\label{homo1}
\langle \nabla_\theta	 S_\theta,\om\rangle =&-f_\theta+c+\O (\eps^2),\\ \label{homo2}
\langle \nabla_\theta	 S_r, \om\rangle =&-f_r+\chi+\O (\eps^2).
\end{align}
In these equations, we are forced to choose 
$$
c(\r)=\( f_\theta(\cdot,\r)\)  \quad \text{and}\quad \chi(\r) =\( f_r(\cdot,\r)\)$$
(see Notation) to achieve that the space mean-value of the r.h.s. vanishes. 
The other two equations are 
\begin{align}\label{homo3}
 \langle\nabla_\theta	 S_\zeta, \om \rangle  -AJ
  S_\zeta=&- f_\zeta+\O (\eps^2),\\ \label{homo4}
 \langle\nabla_\theta	 S_{\zeta\zeta},\om\rangle   -
 AJS_{\zeta\zeta}  +
 S_{\zeta\zeta}JA=&-
 f_{\zeta \zeta}+B+\O (\eps^2)\,,
\end{align}
where  the operator $B$ will be chosen later. 
The most delicate, involving the small divisors as in \eqref{D33},
 is the last equation.

\subsection{The first two equations}\label{homogene}
We begin with equations 
\eqref{homo1} and \eqref{homo2} which are both of the form 
\be\label{homo0}
\langle\nabla_\theta \phi(\theta,\r),\om(\r)\rangle= \psi(\theta,\r)\ee
with $\(\psi\)=0$. 
Here $\om:\D\to\R^n$  is $\Ca^1$ and verifies 
$$|\om-\om_0|_{\Ca^1(\D)}\le\delta_0.$$
Expanding $\phi$ and $\psi$ in Fourier series,
$$\phi=\sum_{k\in\Z^n\setminus \{0\}}\hat \phi (k) e^{ik\cdot \theta}, \quad \psi=\sum_{k\in\Z^n\setminus \{0\}}\hat \psi (k) e^{ik\cdot \theta},$$
we solve eq.  \eqref{homo0}  by choosing
$$
\hat \phi (k) =\ - \frac{i}{ \langle\om, k\rangle}\hat\psi(k), \quad k\in\Z^n\setminus \{0\}; \qquad \hat\phi(0)=0.
$$
 Using Assumption A2~(i) we have, for each $k\not=0$, either that
$$ |\langle\om(\r), k\rangle|\ge \delta_0\quad \forall \r$$
or that
$$\nazz  (\langle k,\omega(\r)\rangle)  \geq \delta_0\quad \forall \r
$$
for a suitable choice of a unit vector $\zz$.
The second case implies that 
$$
 |\langle\om(\r), k\rangle|\ge \ka\,,
 $$
 where $ \ka\le  \delta_0$,
for all $\r$ outside some open 
set $F_k\equiv F_k(\om)$ of Lebesgue measure $\leq \delta_0^{-1}\ka$.\\
Let
$$\D_1=\D\setminus \bigcup_{0<|k|\le N}F_k.$$
Then the closed set $\D_1$ satisfies 
$$\meas(\D\setminus \D_1)\leq N^n\frac {\ka}{\delta_0}\,,$$
and $ |\langle\om(\r), k\rangle|\ge  \ka$ for all $\r\in\D_1$.
Hence, for $\r\in\D_1$ and all $0<|k|\leq N$ we have 
 $$ |\hat \phi (k)| \le \frac{1}{\ka}|\hat\psi(k)|\,.
 $$
Setting $\phi(\theta,\r)=\sum_{0<|k|\leq N}\hat \phi (k,\r) e^{ik\cdot \theta}$, we get that
\be\label{homo0'}
 \langle\nabla_\theta \phi(\theta,\r), \om(\r) \rangle= \psi(\theta,\r)+R(\theta,\r).
\ee
That is, thus defined $\phi$ is an approximate solution of eq.~\eqref{homo0} with  the disparity 
$R(\theta,\r) =-\sum_{|k|> N}\hat \psi (k,\r) e^{ik\cdot \theta}.$
We obtain by a classical argument  that for
 $(\theta,\r)\in \T^n_{\s'}\times \D_1$, $0<\s'<\s$, and $j=0,1$
 \be
\begin{split}\label{esti}
&| \phi(\theta,\r)|\leq \frac{C}{\ka^{}  {(\s-\s')^{n}}} \sup_{|\Im\theta|<\s}|\psi(\theta,\r)| ,\\
&|\p_\r^j  R(\theta,\r)|\leq \frac{C \,e^ {{-\frac12(\s-\s')N}}}{  {(\s-\s')^{n}}}\sup_{|\Im\theta|<\s} 
|\p_\r^j \psi(\theta,\r)| \,, 
\end{split}
\ee
where $C$ only depends on $n$. If $\psi$ is a real function, then so are
$\phi$ and $R$. \\
Differentiating the formula for 
$\hat\phi(k)$ in $\r$ we obtain \footnote{ Here and below 
  $\chi_Q(k)$ stands for the characteristic function of a set $Q\subset \Z^n$. }
 $$\p_\r \hat\phi(k)= \chi_{|k|\le N}(k)
 \Big(
 \ -\frac{i}{ \langle\om, k\rangle}\p_\r \hat\psi(k)+
 \ \frac{i}{ \langle\om, k\rangle^2} \langle \p_\r\om, k\rangle\hat\psi(k)\Big)\,.
 $$
 From this we derive that
\begin{align*}
|\p_\r \phi(\theta,\r)|\leq &\frac{C'}{\ka^{2}  { (\s-\s')^{n}}}\big( \sup_{|\Im\theta|<\s}|\psi(\theta,\r)|
+\sup_{|\Im\theta|<\s}|\p_\r \psi(\theta,\r)|\big)     \,   ,
\end{align*}
where $C'$ only depends on the derivative of $\om$,  { which is bounded by 
$|\omega_0(\rho)|_{C^1}+\delta_0\le |\omega_0(\rho)|_{C^1}+1$.}

Applying this construction to  \eqref{homo1} and \eqref{homo2}  we get

\begin{proposition}\label{prop:homo12}
Let $\om:\D\to\R^n$  be  $\Ca^1$ and verifying 
${|\om-\om_0|_{\Ca^1(\D)}\le\delta_0.}$
Let $f\in \Tc^\ga(\s,\mu,\D)$ and let $\delta_0\geq \ka>0$, $N\ge1$.
Then there exists  a closed  set $\D_1= \D_1(\om,\ka,N)\subset \D$,  satisfying
$$\meas (\D\setminus \D_1)\leq  C N^{n} \frac\ka{\de_0}, $$
and
\begin{itemize}
\item[(i)] there exist real $\Ca^1$-functions $S_\theta$ and $R_\theta$ on $\T^n_{\s}\times \D_1\to\C$,  
analytic in $\theta$,
such that
 $$\langle \nabla_\theta	 S_\theta(\theta,\r),\om(\r)\rangle  =-f_\theta(\theta,\r)+\(f_\theta(\cdot,\r)\)+R_\theta(\theta,\r)$$
and for all $(\theta,\r)\in \T^n_{\s'}\times \D_1$, $\s'<\s$, and $j=0,1$
 \begin{align*}
 |\p_\r^jS_\theta(\theta,\r)|\leq &\frac{C}{\ka^2 
  {(\s-\s')^{n}}}[f]^s_{\s,\mu,\D_1},\\
 |\p_\r^j R_\theta(\theta,\r)|\leq & \frac{C  { e^{- \frac12(\s-\s')N}}}  {  {(\s-\s')^{n}}}  [f]^s_{\s,\mu,\D_1}\,.
\end{align*}

\item[(ii)] there exist  real $\Ca^1$ vector-functions $S_r$ and  $R_r$  on $\T^n_{\s}\times \D_1$,  
analytic in $\theta$, such that
 $$\langle \nabla_\theta	 S_r(\theta,\r),\om(\r)\rangle =-f_r(\theta,\r)+\( f_r(\cdot;\r)\)+R_r(\theta,\r),$$
 and for all $(\theta,\r)\in \T^n_{\s'}\times \D_1$, $\s'<\s$, and $j=0,1$
 \begin{align*}
 |\p_\r^jS_r(\theta,\r)|\leq &
 \frac{C}{\ka^2
  { (\s-\s')^{n}}}[f]^s_{\s,\mu,\D_1},\\
 |\p_\r^j R_r(\theta,\r)|\leq & \, \frac{C  { e^{- \frac12(\s-\s')N}}}  {  {(\s-\s')^{n}}}  [f]^s_{\s,\mu,\D_1}
 \,.
\end{align*}
\end{itemize}
The  constant $C$ only depends on $ |\om_0 |_{\Ca^1(\D)}$.
\end{proposition}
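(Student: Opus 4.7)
The proof is essentially the standard small-divisor construction already sketched in Section \ref{homogene}, applied in parallel to the two scalar/vector equations. I would proceed as follows.

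First, I expand both $f_\theta$ and $f_r$ in Fourier series in $\theta$ and, for each $0<|k|\le N$, solve formally by setting
\[
\hat S_\theta(k,\r)=-\frac{i}{\langle\om(\r),k\rangle}\,\hat f_\theta(k,\r),\qquad \hat S_r(k,\r)=-\frac{i}{\langle\om(\r),k\rangle}\,\hat f_r(k,\r),
\]
with the zeroth Fourier coefficients killed by subtracting the means $\(f_\theta\)$ and $\(f_r\)$, respectively. To make these definitions meaningful in a quantitative way, I invoke Hypothesis A2(i): for each $0<|k|\le N$, either $|\langle\om(\r),k\rangle|\ge\delta_0$ on all of $\D$, or a single directional derivative $\nazz\langle k,\om\rangle\ge \delta_0$ holds on $\D$. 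In the latter case a standard one-dimensional sublevel set argument (integration along the direction $\mathfrak z$) yields an open set $F_k(\om)$ with $\meas F_k\le C\de_0^{-1}\ka$ outside of which $|\langle\om(\r),k\rangle|\ge\ka$. Taking $\D_1=\D\setminus\bigcup_{0<|k|\le N}F_k$ gives the measure bound $\meas(\D\setminus\D_1)\le C N^n\ka/\de_0$ and the uniform lower bound $|\langle\om(\r),k\rangle|\ge\ka$ on $\D_1$.

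Next, the analyticity of $f$ on $\T^n_\s$ and the definition of $[f]^{s}_{\s,\mu,\D}$ give the Paley--Wiener bounds $|\hat f_\theta(k,\r)|,|\hat f_r(k,\r)|\le [f]^s_{\s,\mu,\D}\,e^{-\s|k|}$ (after rescaling the components by $1,\mu^{-2}$ as in \eqref{norm2}). Truncating the sum at $|k|\le N$ and using $|\langle\om,k\rangle|^{-1}\le\ka^{-1}$, the familiar estimate
\[
\sum_{0<|k|\le N}e^{-(\s-\s')|k|}\le \frac{C}{(\s-\s')^n}
\]
yields the claimed sup bound on $|S_\theta|,|S_r|$ on $\T^n_{\s'}\times\D_1$, while the tail bound follows from $\sum_{|k|>N}e^{-\s|k|}e^{\s'|k|}\le C(\s-\s')^{-n}e^{-\frac12(\s-\s')N}$, giving the remainder estimate for $R_\theta$ and $R_r$.

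Finally, for the $\r$-derivative I differentiate the explicit formula for $\hat S_\theta(k,\r)$, obtaining a term involving $\partial_\r\hat f_\theta(k,\r)/\langle\om,k\rangle$ and a term with $\langle\partial_\r\om,k\rangle\,\hat f_\theta(k,\r)/\langle\om,k\rangle^2$. Since $|\partial_\r\om|$ is bounded by $|\om_0|_{C^1(\D)}+\de_0$ and each divisor costs one factor $\ka^{-1}$, the extra $|k|$ in the numerator is absorbed into the analytic bound on $\hat f$ by enlarging the constant in the $(\s-\s')^{-n}$ factor; overall one loses one extra power of $\ka$, which accounts for the $\ka^{-2}$ in the final estimate. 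The resulting $\r$-dependence is $C^1$ (in the Whitney sense) on $\D_1$, and $S_\theta, S_r$ are real whenever $f_\theta, f_r$ are. The main technical point is really just the bookkeeping in the parameter-derivative estimate; the small-divisor and Fourier-truncation estimates are routine once Hypothesis A2(i) and the Whitney $C^1$ framework for $\om$ are in place.
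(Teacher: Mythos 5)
Your proposal is correct and follows essentially the same route as the paper's own proof: Fourier expansion truncated at $|k|\le N$, the dichotomy of Hypothesis A2(i) combined with a one-dimensional sublevel-set estimate along $\mathfrak z$ to excise the sets $F_k$ and obtain $|\langle\om,k\rangle|\ge\ka$ on $\D_1$, standard analytic decay and tail estimates for the $(\s-\s')^{-n}$ and $e^{-\frac12(\s-\s')N}$ factors, and differentiation of $\hat S(k,\r)$ in $\r$ to account for the $\ka^{-2}$. No gaps worth flagging.
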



\subsection{The third equation}\label{s5.3} 
To begin with, we  recall a result proved
in the appendix of \cite{EK10}.

\begin{lemma}\label{EK}
Let $A(t)$ be a real diagonal $N\times N$-matrix with diagonal components $a_j$ which are $\Ca^1$ on $I=]-1,1[$,
 satisfying for all $j=1,\dots,N$ and all $t\in I$
$$a'_j(t)\geq\delta_0
 .$$ Let $B(t)$ be a Hermitian $N\times N$-matrix of class $\Ca^1$ on $I$ such that \footnote{Here 
 $\|\cdot\|$ means the operator-norm of a matrix associated to the euclidean norm on $\C^N$.}
$$\|B'(t)\|\leq \delta_0/2 
,$$ 
for all $t\in I$. 
Then
$$\|(A(t)+B(t))^{-1}\|\leq \frac 1 \eps $$
outside a set of $t\in I$ of Lebesgue measure $\leq C N\eps\delta_0^{-1}$,
where 
$C$ is a numerical constant.
\end{lemma}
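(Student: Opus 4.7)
The plan is to control the operator norm of $(A(t)+B(t))^{-1}$ by studying the eigenvalues of the Hermitian matrix $M(t):=A(t)+B(t)$. Since $M(t)$ is Hermitian for each $t$, $\|M(t)^{-1}\|=(\min_j|\mu_j(t)|)^{-1}$, where $\mu_1(t)\le\mu_2(t)\le\cdots\le\mu_N(t)$ are its real eigenvalues listed with multiplicity. So it is enough to bound, for each $j$, the measure of the bad set $B_j(\eps):=\{t\in I : |\mu_j(t)|<\eps\}$, then sum over $j$.

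First I would establish regularity of the $\mu_j$. Since $M\in C^1(I)$ is Hermitian, by Weyl's inequality the functions $\mu_j$, ordered increasingly, are Lipschitz on $I$ with Lipschitz constant bounded by $\sup_t\|M'(t)\|$, hence differentiable almost everywhere. At a point of differentiability one has the Hellmann--Feynman formula
\[
\mu_j'(t)=\langle v_j(t),M'(t)v_j(t)\rangle
\]
for some unit eigenvector $v_j(t)$ of $M(t)$ associated to $\mu_j(t)$ (this follows from a standard min-max computation; at non-crossing points it is classical Rellich--Kato, and at crossings the same formula holds in both one-sided senses for an appropriate choice of $v_j$).

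The crucial step is the lower bound $\mu_j'(t)\ge\delta_0/2$ almost everywhere. Since $A$ is diagonal with $a_j'(t)\ge\delta_0$, we have $A'(t)\succeq\delta_0 I$ as Hermitian operators, while $\|B'(t)\|\le\delta_0/2$ yields $B'(t)\succeq -(\delta_0/2)I$. Therefore $M'(t)\succeq(\delta_0/2)I$, and for any unit vector $v$
\[
\langle v,M'(t)v\rangle\ge \tfrac{\delta_0}{2}.
\]
Combined with the Hellmann--Feynman identity this gives $\mu_j'(t)\ge\delta_0/2$ at every $t$ where $\mu_j$ is differentiable. Hence each $\mu_j$ is strictly increasing on $I$ with $\mu_j(t_2)-\mu_j(t_1)\ge(\delta_0/2)(t_2-t_1)$ for $t_1<t_2$.

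With this monotonicity, the set $B_j(\eps)=\{t\in I:|\mu_j(t)|<\eps\}$ is contained in an interval $(t_-,t_+)$ with $\mu_j(t_\pm)=\pm\eps$ (or an endpoint of $I$), so
\[
\meas B_j(\eps)\le \frac{2\eps}{\delta_0/2}=\frac{4\eps}{\delta_0}.
\]
Taking the union over $j=1,\ldots,N$ yields
\[
\meas\{t\in I:\|M(t)^{-1}\|>\eps^{-1}\}\le \sum_{j=1}^N\meas B_j(\eps)\le \frac{4N\eps}{\delta_0},
\]
which is the desired estimate with $C=4$.

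The only genuine point to be careful about is the differentiability of the ordered eigenvalues at crossings; this is a well-known soft issue that is handled by working with the ordered eigenvalues as Lipschitz functions and invoking the min-max characterization to justify the one-sided Hellmann--Feynman bound $\mu_j'(t)\ge \delta_0/2$ almost everywhere. Once this is granted, the measure estimate is an immediate consequence of the uniform monotonicity.
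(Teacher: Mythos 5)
Your proof is correct and follows essentially the same eigenvalue-monotonicity argument as the source the paper relies on for this lemma (the appendix of \cite{EK10}); the paper itself gives no proof, only the citation. The one delicate point you flag, differentiability of the ordered eigenvalues at crossings, can be removed entirely by applying Weyl's inequality to increments: since $M(t_2)-M(t_1)=\int_{t_1}^{t_2}M'(\tau)\,\dd\tau\succeq\frac{\delta_0}{2}(t_2-t_1)I$ for $t_1<t_2$, one gets $\mu_j(t_2)\ge\mu_j(t_1)+\frac{\delta_0}{2}(t_2-t_1)$ directly, and your measure estimate with $C=4$ follows unchanged.
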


Concerning the third component  \eqref{homo3} of the homological equation we have

\begin{proposition}\label{prop:homo3}
Let $\om:\D\to\R^n$  be  $\Ca^1$ and verifying 
${|\om-\om_0|_{\Ca^1(\D)}\le\delta_0.}$  Let $\D\ni\r\mapsto A(\r)\in \NF\cap\M_0$ be $\Ca^1$ and verifying 
\be\label{ass1}
 \| \p_\r^j (A(\r)-A_0(\r))_{[a]} \|_{HS}\le \frac{1}{2}\de_0
 \ee
for $j=0,1$, $a\in\L$ and  $\r\in \D$.
Let $f\in \Tc^s(\s,\mu,\D)$, $0<\ka\leq \frac{\delta_0}2$ and  $N\ge 1$. \\
Then there exists  a closed set $\D_2=\D_2(\om,A,\ka,N)\subset \D$,  satisfying
 $$
 \meas (\D\setminus \D_2)\leq  
 C N^{\exp} 
 \frac{\ka}{\de_0}, $$
and there exist real $\Ca^1$-functions $S_\zeta$ and $R_\zeta$ from  $\T^n  \times \D_2$ to $Y_s$,  
analytic in $\theta$, such that
\be\label{homo3ter}\langle  \nabla_\theta	 S_\zeta(\theta,\r), \om(\r)\rangle  { -A(\r) J}S_\zeta(\theta,\r)=
-f_\zeta(\theta,\r)+R_\zeta(\theta,\r)
\ee
and for all $(\theta,\r)\in \T^n_{\s'}\times \D_2$, $\s'<\s$, and $j=0,1$
 \begin{align*}
\mu\| \p_\r^j S_\zeta(\theta,\r)\|_{s+1}+\mu|\p_\r^j S_\zeta(\theta,\r)|_{\b+}\leq &\frac{C
}{\ka^2 (\s-\s')^{n}}[f]^{s,\b}_{\s,\mu,\D},\\
\mu \|  \p_\r^j R_\zeta(\theta,\r)\|_{s}+\mu|\p_\r^j R_\zeta(\theta,\r)|_{\b}\leq &\frac{C e^{-\frac12(\s-\s')N}}{(\s-\s')^{n}}[f]^{s,\b}_{\s,\mu,\D}
\end{align*}
for $j=0,1$.\\
The exponent $\exp$ only depends on $d, n,\ga$ while the constant $C$ depends on $ |\om_0 |_{\Ca^1(\D)}$,
$ \| A_0 \|_{\Ca^1(\D)}$. 
\end{proposition}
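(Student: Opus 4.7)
The plan is a Fourier-plus-block-diagonalisation argument in the Eliasson--Kuksin style, with one twist specific to the present setting: the operator that has to be inverted grows like $w_a$ on the block $[a]$, which will supply the regularising gain from $\|\cdot\|_s$ to $\|\cdot\|_{s+1}$ (and from $|\cdot|_\b$ to $|\cdot|_{\b+}$) present in the conclusion.

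Expand $f_\zeta(\theta,\r)=\sum_{k}\hat f_\zeta(k,\r)e^{ik\cdot\theta}$ and look for $S_\zeta(\theta,\r)=\sum_{|k|\le N}\hat S_\zeta(k,\r)e^{ik\cdot\theta}$; the tail $|k|>N$ then gives the disparity $R_\zeta$, whose exponential smallness in $N$ follows from the analyticity of $f_\zeta$ on $\T^n_\s$ by the classical estimate already used in Proposition~\ref{prop:homo12}. For each $|k|\le N$, equation \eqref{homo3} reduces to
\ben L_k\,\hat S_\zeta(k)=-\hat f_\zeta(k),\qquad L_k:=i\langle k,\om\rangle I-AJ. \een
Since $A\in\NF$ is block-diagonal with respect to the equivalence classes $[a]$, so is $L_k$, and it suffices to invert each finite-dimensional block $L_k|_{[a]}$. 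Passing to complex variables $(\xi,\eta)$ in which $J=\diag(i,-i)$ and the quadratic form $\tfrac12\langle\zeta,A\zeta\rangle$ is governed by a Hermitian block-diagonal matrix $Q$, the operator $L_k|_{[a]}$ splits as a direct sum of two Hermitian matrices $\langle k,\om\rangle I\mp Q|_{[a]}$.

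To invert these I will apply Lemma~\ref{EK} with diagonal part $\langle k,\om(\r)\rangle I\mp\diag(\la(\r))_{a\in[a]}$ and perturbation $\mp(Q|_{[a]}-\diag(\la(\r)))$, whose HS-norm is $\le\tfrac12\de_0$ by hypothesis \eqref{ass1}. Hypothesis A2(ii) covers the ``$+$'' sign and Hypothesis A3 applied with $a,b$ in the same block (so $w_a=w_b$, and \eqref{D33} reduces to $|\langle k,\om\rangle+\la-\lb|\ge\ka$) covers the ``$-$'' sign: in both cases the dichotomy produces either a uniform lower bound $\ge\ka$ on the diagonal entries, in which case Lemma~\ref{EK} gives invertibility directly, or a unit direction $\zz$ along which the diagonal derivative is $\ge\de_0$, in which case Lemma~\ref{EK} is applied on the one-parameter slice $\r+t\zz$. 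Summing the excluded measures over $|k|\le N$ and over blocks with $w_a\lesssim N$ (the only ones for which $\langle k,\om\rangle\pm\la$ can be small --- for $w_a\gg N$ the asymptotic \eqref{laequiv} already yields invertibility with norm $\le C/w_a$ with no exclusion) produces a closed set $\D_2\subset\D$ with $\meas(\D\setminus\D_2)\le CN^{\exp}\ka/\de_0$ on which
\ben \|(L_k|_{[a]})^{-1}\|_{\mathrm{op}}\le\frac{C}{\ka\, w_a} \een
uniformly in $|k|\le N$ and $[a]$, after combining the ``resonant'' and ``non-resonant'' regimes in $w_a$.

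To conclude, apply this bound block-by-block in the Hilbert--Schmidt norm to $\hat S_\zeta(k)=-L_k^{-1}\hat f_\zeta(k)$: weighting by $w_a^{2(s+1)}$ and summing in $a$ gives the $Y_{s+1}$-estimate, and taking the sup weighted by $w_a^{\b+1}$ gives the $L_{\b+}$-estimate --- the extra $w_a$ in both norms being provided precisely by the gain $1/w_a$ above. Summing the Fourier series on $\T^n_{\s'}$ gives the standard $(\s-\s')^{-n}$ factor, and the $\r$-derivative estimate follows by differentiating the resolvent identity, which accounts for the second factor $\ka^{-1}$ in the statement. The main obstacle will be the third step: the bookkeeping of measures has to accommodate simultaneously the block sizes $d_{[a]}\le Cw_a^d\le CN^d$ entering Lemma~\ref{EK}, the $N^n$ terms from the Fourier truncation, and the $N^{\a_1}$ from Hypothesis A3, and it is their combination that fixes the exponent $\exp$ (depending only on $n$, $d$, $\ga$) in the measure estimate.
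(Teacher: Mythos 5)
Your overall scheme is the paper's: truncate the Fourier series at $|k|\le N$ (the tail giving $R_\zeta$, exponentially small by analyticity), pass to complex variables so that the hamiltonian operator becomes block-diagonal with Hermitian blocks $Q_{[a]}=\diag\{\la\}+B_{[a]}$, $\|B_{[a]}\|_{HS}\le\de_0/2$ by \eqref{ass1}, invert block by block (no exclusion needed for $k=0$ or for $w_a\gtrsim (N/c_0)^{1/\ga}$, Lemma~\ref{EK} on a one-parameter slice otherwise), and use the resulting bound $\|L(k,[a],\r)^{-1}\|\le(\ka w_a)^{-1}$ to gain one power of $w_a$, which is exactly what produces the $Y_{s+1}$ and $L_{\b+}$ estimates; the $\r$-derivative accounts for the second factor $\ka^{-1}$. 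All of this coincides with the paper's proof.

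There is, however, a genuine error in your treatment of the ``$-$'' sign. The eigenvalues of $\langle k,\om\rangle I-Q|_{[a]}$ are $\langle k,\om\rangle-\alpha_j$ with $\alpha_j$ within $\de_0/2$ of $\la$, so the divisors to control are of the form $\langle k,\om\rangle-\la$. Hypothesis A3 applied with $a,b$ in the same block controls $\langle k,\om\rangle+\la-\lb$ with $w_a=w_b$, which is essentially a lower bound on $|\langle k,\om\rangle|$ alone; it gives no information on $\langle k,\om\rangle-\la$, which can vanish precisely in the regime $w_a\lesssim (N/c_0)^{1/\ga}$ that you must treat. Moreover A3 is a measure statement, not a dichotomy with a transversality alternative (that structure belongs to A2), and invoking it would cost an excluded set of measure $CN^{\a_1}(\ka/\de_0)^{\a_2}$ and an exponent depending on $\a_1$, which is incompatible with the stated bound $CN^{\exp}\ka/\de_0$ with $\exp$ depending only on $d,n,\ga$. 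The repair is immediate and is what the paper does: both signs follow from A2(ii), since $k$ runs over all of $\Z^n\setminus\{0\}$ and $|\langle k,\om\rangle-\la|=|\langle -k,\om\rangle+\la|$; A2(ii) applied to $-k$ gives either the uniform lower bound $\de_0 w_a$ or a unit direction of transversality, and Lemma~\ref{EK} on that slice (using \eqref{ass1} with $j=1$ to bound the derivative of the Hermitian perturbation by $\de_0/2$) yields \eqref{inverse} outside a set of measure $\lesssim w_a^{d}\ka\de_0^{-1}$, exactly as in your ``$+$'' case. With this correction (plus the cosmetic final step of taking real parts so that $S_\zeta$, $R_\zeta$ are real), your argument is the paper's.
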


\proof

It is more convenient to deal with the hamiltonian operator $JA$ than with operator $AJ$. Therefore we
multiply eq.~\eqref{homo3ter} by $J$ and obtain for $JS_\zeta$ the equation
\be\label{homo31}
\big\langle  \nabla_\theta	(J S_\zeta)(\theta,\r),\, \om(\r) \big\rangle   -JA(\r) (J S_\zeta)(\theta,\r)=
- Jf_\zeta(\theta,\r)+ JR_\zeta(\theta,\r)
\ee

Let us re-write \eqref{homo31} in the complex variables ${}^t(\xi,\eta)$. 
 For $a\in\L$ {
\be\label{U}
\zeta_a=\left( \begin{array}{ll}p_a\\ q_a\end{array}\right)=U_a\left( \begin{array}{cc}\xi_a\\ \eta_a\end{array}\right), \quad 
U_a=\frac 1 {\sqrt 2}\left( \begin{array}{cc}  1 &  1 \\ -i & i\end{array}\right)\,.
\ee
The symplectic operator $U_a$ transforms the quadratic form 
$(\lambda_a/2) \langle\zeta_a, \zeta_a\rangle$ to $i\lambda_a\xi_a\eta_a$. Therefore, if we denote by $U$
the direct product of the operators diag$\, (U_a, a\in\L)$  then
it transforms $(1/2)\langle\zeta, A_0(\rho)\zeta\rangle\ $ to $\ \sum_{a\in\L}i\lambda_a \xi_a\eta_a$. 
So  it transforms the hamiltonian matrix 
$JA_0(\rho)$
to the diagonal hamiltonian 
matrix
$$
\text{diag}\, \{ i \lambda_a \left( \begin{array}{cc}  -1 &  0 \\ 0 & 1\end{array}\right), a\in\L\}.
$$
Then we make { in \eqref{homo31} the 
substitution   $ JS_\zeta=U S$, $JR_\zeta=U R$ 
and $- Jf_\zeta=U F_\zeta$, where $S={}^t(S^\xi, S^\eta)$, etc. }
In this notation eq.~\eqref{homo3ter}  decouples into two equations 
\begin{align}\begin{split}\label{homo3bis}
&\langle \nabla_\theta  S^\xi, \om\rangle - i\ {}^tQS^\xi= F^\xi+R^\xi,\\
&\langle\nabla_\theta  S^\eta,\om \rangle+  iQS^\eta= F^\eta+R^\eta.
\end{split}\end{align}
Here $Q:\L\times\L\to \C$ is the scalar valued matrix associated to $A$ via the formula \eqref{Q}, i.e. $$Q=\diag \{ \la,\ a\in\L\}+B,
$$
where  $B$ is Hermitian and  block diagonal.

Written in the  Fourier variables,  eq.~\eqref{homo3bis} becomes 
\begin{align}\begin{split}\label{homo3-4}
i(\langle k, \om\rangle + \ {}^tQ)\ \hat S^\xi(k)&= \hat F^\xi(k)+\hat R^\xi(k),\quad k\in \Z^n,\\
i(\langle k, \om\rangle - Q)\ \hat S^\eta(k)&= \hat F^\eta(k)+\hat R^\eta(k),\quad k\in \Z^n.
\end{split}\end{align}
The two equations in \eqref{homo3-4} are similar, so let us consider (for example) the second one, and
let us  decompose it into  its ``components'' over the  blocks $[a]$:
\be\label{homo3-4bis}
i( \langle k, \om(\r) \rangle  + Q(\r)_{[a]}) \hat S_{[a]}(k)=\hat F_{[a]}(k,\r)+\hat R_{[a]}(k)\ee
where the matrix $Q_{[a]}$ is  the restriction of $Q$  to $[a]\times[a]$ and the vector $\hat F_{[a]}(k,\r)$ is the
restriction of $\hat F(k,\r)$ to $[a]$  --  here we have suppressed the upper index $\eta$.
Denotes by $L(k,[a],\r)$ the Hermitian operator in the left hand side of equation \eqref{homo3-4bis}. We want to estimate the operator norm of $L(k,[a],\r)^{-1}$, i.e. we want to estimate from below the modulus of the eigenvalues of $L(k,[a],\r)$.

Let $\alpha(\r)$ denote an eigenvalue of the matrix $Q_{[a]}(\r)$, $a\in\L$.
It follows from \eqref{ass1} that 
$$|\alpha(\r) -\la(\r) |\leq  \frac{\delta_0} {2}\leq  \frac{c_0} {2}$$
for some appropriate $a\in[a]$, which implies that  
$$|\alpha(\r)|\ge\frac{c_0}2 w_a^{\ga}\ge \ka\ w_a$$ 
by \eqref{laequiv}. Hence, 
$$
\|L(0,[a],\r)^{-1}\| \leq \big(\ka \ w_a\big)^{-1}\quad \forall\r,\ \forall a.
$$
Assume that  $0<|k|\le N$.  Since $|\langle k, \om(\r)\rangle |\lsim N$ it follows from \eqref{laequiv}
 that
$$
|\langle k, \om(\r)\rangle  + \alpha(\r)|\geq \frac 1 2 w_a^{\ga}\ge \ka\ w_a$$
whenever $ w_a\gsim( \frac{N}{c_0})^{\frac1{\ga}}$. Hence  for these $a$'s we get
\be\label{inverse}
\|L(k,[a],\r)^{-1}\| \leq \big(\ka \ w_a\big)^{-1}\quad \forall\r.
\ee
Now  let $ w_a\lsim (\frac{N}{c_0})^{\frac1{\ga}}$. By Assymption A2(ii) we have either
$$|\langle k, \om(\r)\rangle  + \la(\r)|\geq \delta_0 w_a\quad \forall\r, \forall a$$ 
or we have a unit vector ${\mathfrak z}$ such that
$$ \nazz(  \langle k,\om(\r)\rangle+\la(\r)) \geq \delta_0\quad \forall\r, \forall a.$$
 The first case clearly implies  \eqref{inverse},
so let us consider the second case.
By  \eqref{ass1} it follows that\footnote{We use that the operator norm is controlled by the Hilbert Schmidt norm: $\| M\|\leq \| M\|_{HS}$.}
$$ \| \nazz H_{[a]}(\r)   \| \le \frac{\de_0}2.$$
The Hermitian matrix $( \langle k, \om(\r) \rangle  + Q(\r)_{[a]})$ is of dimension $\lsim w_a^{d} $ (see \eqref{block})    therefore, by   Lemma \ref{EK}, we conclude that   \eqref{inverse} holds
for all $\r$ outside   a suitable set $F_{a,k}$ of measure 
$\lsim w_a^{d} \ka\delta_0^{-1}$ . Let
$$
\D_2=\D\setminus  \bigcup_{\substack{|k|\leq N\\  w_a \lsim (\frac{N}{c_0})^{\frac{1}{\ga}}}}F_{a,k}.
$$
Then we get
$$
\meas(\D\setminus \D_2)\lsim 
 N^n\Big (\frac{N}{c_0} \Big)^{\frac{d+1}{\ga}}\frac{\ka}{\delta_0} 
$$ 
and \eqref{inverse} holds for all $\r\in\D_2$, all $|k|\le N$ and all $[a]$.

The equation \eqref{homo3-4bis} is now  solved by
\be\label{homeq3}\hat S_{[a]}(k,\r)= \chi_{|k|\le N} (k)
L(k,[a],\r)^{-1}\hat F_{[a]}(k,\r), \quad  a\in\L\,,
 \ee
and 
\be\label{homeq32}
\hat R_{a}(k,\r)= { \chi_{|k| > N} (k)}
\hat F_{a}(k,\r ), \quad  a\in\L \,.
\ee
Using \eqref{inverse}  we have for $\r\in\D_2$ 
\begin{align*}
\|S_{[a]}(\theta,\r)\|\lsim &\frac{1}{\ka\ w_a (\s-\s')^{n}}\sup_{|\Im\theta|<\s}\| F_{[a]}(\theta,\r)\|,\\
| R_{a}(\theta,\r)|\lsim &\frac{e^{- \frac12(\s-\s')N}}{(\s-\s')^{n}}\sup_{|\Im\theta|<\s}| F_{a}(\theta,\r)|.
\end{align*}
for $\theta\in \T^d_{\s'}\,$, see \eqref{esti}. \\
Since $\|S\|^2_s=\sum_{a\in\L} w_a^{2s}|S_a|^2=\sum_{a\in\hat\L} w_a^{2s}\|S_{[a]}\|^2$ these estimates imply that 
\begin{align*}
\| S(\theta,\r)\|_{s+1}+|S(\theta,\r)|_{\b+}\lsim &\frac{1}{\ka (\s-\s')^{n}}\sup_{|\Im\theta|<\s}\| F(\theta,\r)\|_{s},\\
\| R(\theta,\r)\|_{s}+|R(\theta,\r)|_{\b+}\lsim &\frac{e^{- \frac12(\s-\s')N}}{ (\s-\s')^{n}}\sup_{|\Im\theta|<\s}\| F(\theta,\r)\|_{s},
\end{align*}
for any $\s'\le \s$. 
The estimates of the derivatives with respect to $\r$ are obtained by differentiating
 \eqref{homeq3} and  \eqref{homeq32}   as in Proposition  \ref{prop:homo3}. 
 
 The  functions $F$ and $R$ are complex,  {and a-priori the constructed solution
 $S_\zeta$ also may be complex. Instead of proving that it is real, we replace $S_\zeta, \theta\in\T^n$,
 by its real part and then analytically extend it to $\T^n_{\sigma'}$, using the relation
 $
 \Re S_\zeta(\theta,\rho) :=\frac12 ( S_\zeta(\theta,\rho) +\bar  S_\zeta(\bar\theta,\rho) ).
 $
 Thus we obtain a real solution
 which obeys the same estimates.}
\endproof

\subsection{The last equation}\label{s5.4}
 Concerning the fourth component of the homological equation, \eqref{homo4}, we have the following result
\begin{proposition}\label{prop:homo4}
Let $\om:\D\to\R^n$  be  $\Ca^1$ and verifying 
${|\om-\om_0|_{\Ca^1(\D)}\le\delta_0.}$  Let $\D\ni\r\mapsto A(\r)\in \NF\cap\M_\b$ be $\Ca^1$ and verifying 
\be\label{ass2}
 \left\| \p_\r^j (A(\r)-A_0(\r))_{[a]} \right\|_{HS} \le \frac{\de_0}{4w_a^{2\b}}\ee
for $j=0,1$, $a\in\L$  and $\r\in \D$.
Let $f\in \Tc^{s,\b}(\s,\mu,\D)$, $0<\ka\le\frac{\delta_0}2$ and $N\ge 1$. \\
 Then there exists a subset $\D_3=\D_3(h, \ka,N)\subset \D$, satisfying 
 $$\meas (\D\setminus \D_3)\leq  
 C \Big(\frac{ N}{c_0} \Big)^{\exp} \Big(\frac{\ka}{\de_0} \Big)^{\exp'}, $$
and there exist real $\Ca^1$-functions 
$B: \D_3\to \M_\b \cap \NF $, $\hat S_{\zeta\zeta}(0): \D_3\to \M$ and 
$S_{\zeta\zeta}-\hat S_{\zeta\zeta}(0)$, $R_{\zeta\zeta}(\cdot;\r):\T^n_{\s}\times \D_3\to \M_\b$,
analytic in $\theta$, such that
\be\label{homo4ter} 
\langle  \nabla_\theta S_{\zeta\zeta}(\theta,\r), \om(\r)\rangle  -A(\r)JS_{\zeta\zeta}(\theta,\r)+
S_{\zeta\zeta}(\theta,\r)JA(\r)=-f_{\zeta \zeta} (\theta,\r)+B(\r)+R_{\zeta\zeta}(\theta,\r)
 \ee
and for all $(\theta,\r)\in \T^n_{\s'}\times \D_3$, $\s'<\s$, and $j=0,1$
  \begin{align}
\label{estim-homo4R}
\mu^2\left| \p_\r^j R_{\zeta\zeta}(\theta,\r)\right|_{\b}&\leq  C\ \frac{e^{-\frac12(\s-\s')N}}{ (\s-\s')^{n}}[f]^{s,\b}_{\s,\mu,\D},\\
\label{estim-homo4S}
\mu^2 \left|\p_\r^j  S_{\zeta\zeta}(\theta,\r)\right|_{\b+}&\leq C\
\frac{1}{\kappa^2(\s-\s')^{n}}[f]^{s,\b}_{\s,\mu,\D},\\
 \label{B}
\mu^2 \left|\p_\r^j B(\r)\right|_{\b}&\leq  C  [f]^{s,\b}_{\s,\mu,\D}.\end{align}
The two exponents $\exp$ and $\exp'$ are positive numbers depending on $n$, $\ga$, $d$, $\a_1$, $\a_2$, $\b$. The constant $C$  depends also on $h_0$.
\end{proposition}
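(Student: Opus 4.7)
The plan is to pass to complex variables (as in Proposition~\ref{prop:homo3}), then to block Fourier coefficients, and to treat the resulting small linear problem on each block $(k,[a],[b])$ by an inverse function/measure argument analogous to Proposition~\ref{prop:homo3} but with the extra complication of a Sylvester-type operator and a second Melnikov divisor. Conjugating $JA$ to its complex-variable form $\mathrm{diag}(-iQ,iQ)$ and writing $S_{\zeta\zeta}$ in $(\xi,\eta)$-blocks, the equation \eqref{homo4ter} decouples into three matrix equations, one for each of the blocks $S^{\xi\xi}$, $S^{\eta\eta}$, $S^{\xi\eta}$. Projecting on $[a]\times[b]$ and passing to Fourier variables gives, on each block, a linear equation of the form
\begin{equation*}
L(k,[a],[b],\r)\,\hat S_{[a]}^{[b]}(k):=i\langle k,\om\rangle \hat S_{[a]}^{[b]}(k)\pm {}^tQ_{[a]}\hat S_{[a]}^{[b]}(k)\mp \hat S_{[a]}^{[b]}(k) Q_{[b]} = \hat F_{[a]}^{[b]}(k),
\end{equation*}
with the sign combinations $(+,+),(-,-),(+,-)$ corresponding respectively to the $\xi\xi$, $\eta\eta$ and $\xi\eta$ blocks. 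The eigenvalues of $L$ are $i(\langle k,\om\rangle+\alpha\pm\beta)$, where $\alpha,\beta$ are eigenvalues of $Q_{[a]},Q_{[b]}$, which by \eqref{ass2} are within $\delta_0/2$ of the unperturbed $\lambda_a,\lambda_b$.

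The only truly resonant contribution is $k=0$, $w_a=w_b$, in the $\xi\eta$ block, since there $\langle k,\om\rangle+\alpha-\beta$ may vanish. These terms are put into $B$, which is therefore automatically real, Hermitian (in its complex form) and block-diagonal with respect to $w$, hence in $\NF$; the bound \eqref{B} is immediate from \eqref{norm2} applied to $f_{\zeta\zeta}$. For $|k|>N$ we simply set $\hat S_{\zeta\zeta}(k)=0$ and put the corresponding Fourier modes of $f_{\zeta\zeta}$ into $R_{\zeta\zeta}$; \eqref{estim-homo4R} then follows from the standard Paley--Wiener estimate used already in \eqref{esti}.

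It remains to invert $L(k,[a],[b],\r)$ for $0<|k|\leq N$ (and for $k=0$ with $w_a\neq w_b$). The $\xi\xi$ and $\eta\eta$ cases are the easier ones: by \eqref{laequiv} the divisor $\langle k,\om\rangle+\lambda_a+\lambda_b$ is $\gtrsim w_a+w_b$ as soon as $\max(w_a,w_b)\gtrsim(N/c_0)^{1/\ga}$, while for the remaining finitely many blocks Hypothesis~A2(iii) combined with Lemma~\ref{EK} (applied to the Hermitian matrix ${}^tQ_{[a]}\otimes I+I\otimes Q_{[b]}$, whose dimension is at most $Cw_a^d w_b^d$) yields $\|L^{-1}\|\leq \kappa^{-1}$ outside an exceptional $\r$-set of measure $\lesssim w_a^d w_b^d \kappa/\delta_0$. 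The $\xi\eta$ block, corresponding to the second Melnikov divisor $\langle k,\om\rangle+\lambda_a-\lambda_b$, is handled in two regimes: if $\max(w_a,w_b)\leq C(N/c_0)^{1/\ga}$ we combine Hypothesis~A2(iv) with Lemma~\ref{EK} to obtain $\|L^{-1}\|\leq \kappa^{-1}$; for the (infinitely many) blocks with $\max(w_a,w_b)$ large we invoke directly Hypothesis~A3, which provides a set $\D_3$ of the announced measure such that $\|L(k,[a],[b],\r)^{-1}\|\leq \bigl(\kappa(1+|w_a-w_b|)\bigr)^{-1}$ for every $|k|\leq N$, every $[a],[b]$, every $\r\in\D_3$.

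The crucial point is that the factor $(1+|w_a-w_b|)^{-1}$ coming from A3 matches exactly the weight built into $|\cdot|_{\b+}$. Dividing the equation $\hat S_{[a]}^{[b]}(k)=L^{-1}\hat F_{[a]}^{[b]}(k)$ in Hilbert--Schmidt norm and using $\|f_{\zeta\zeta}\|_\b\leq \mu^{-2}[f]^{s,\b}_{\s,\mu,\D}$ together with the standard loss $(\s-\s')^{-n}$ coming from the Fourier sum as in \eqref{esti}, one obtains
\begin{equation*}
(1+|w_a-w_b|)w_a^\b w_b^\b \|\hat S_{[a]}^{[b]}(k)\|_{HS} \leq \frac{C}{\kappa^2 \mu^2 (\s-\s')^n}[f]^{s,\b}_{\s,\mu,\D},
\end{equation*}
which is precisely \eqref{estim-homo4S}. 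The $\r$-derivative estimates follow by differentiating the formula for $\hat S_{[a]}^{[b]}(k)$, producing an extra factor $\|\p_\r L\|\cdot\|L^{-1}\|\lesssim \kappa^{-1}$, as in Proposition~\ref{prop:homo12}. Reality is recovered by replacing $S_{\zeta\zeta}$ by $\tfrac12(S_{\zeta\zeta}(\theta,\r)+\overline{S_{\zeta\zeta}(\bar\theta,\r)})$. I expect the main obstacle to lie in the $\xi\eta$ block: carefully matching the two regimes (Lemma~\ref{EK} for low modes, A3 for high ones) so that the total excluded measure remains polynomial in $N$ and of order $(\kappa/\delta_0)^{\exp'}$, and verifying that the $(1+|w_a-w_b|)$ gain from A3 really survives the passage to the Hilbert--Schmidt norm on each $2\times 2$--valued block $[a]\times[b]$ of variable dimension $\lesssim (w_aw_b)^d$.
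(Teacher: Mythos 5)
Your overall scheme (complexification, decoupling into the $\xi\xi$, $\eta\eta$, $\xi\eta$ blocks, putting the $k=0$, $w_a=w_b$ part of the $\xi\eta$ component into $B$, the $|k|>N$ tail into $R_{\zeta\zeta}$, Lemma~\ref{EK} plus A2 for finitely many blocks and A3 for the rest) is the same as the paper's, but there is a genuine gap in your treatment of the $\xi\eta$ equation for $k\neq0$, which is exactly the delicate part. Hypothesis A3 bounds the \emph{unperturbed} divisor $\langle k,\om\rangle+\la-\lb$, whereas the eigenvalues of $L(k,[a],[b],\r)$ involve the eigenvalues $\alpha_j,\beta_\ell$ of the perturbed blocks $Q_{[a]},Q_{[b]}$, which by \eqref{ass2} may differ from $\la,\lb$ by as much as $\de_0/(4w_a^{2\b})+\de_0/(4w_b^{2\b})$. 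For A3 to survive this perturbation you need $\de_0/\min(w_a,w_b)^{2\b}\lesssim \ka(1+|w_a-w_b|)$, i.e.\ a threshold on $\min(w_a,w_b)$ of order $(\de_0/\ka)^{1/(2\b)}$ (more precisely $(\de_0/2\eta)^{1/(2\b)}$ for an auxiliary gap $\eta$). Your split at $\max(w_a,w_b)\lessgtr C(N/c_0)^{1/\ga}$ is the wrong one: there are infinitely many blocks with $\max(w_a,w_b)$ large, $|w_a-w_b|\lesssim N$, but $\min(w_a,w_b)$ far below $(\de_0/\ka)^{1/(2\b)}$ (recall that in the KAM iteration $\ka\sim\eps_j^{1/64}$ while $\de_0\sim\eps_0^{1/4}$ is fixed and $N\sim\ln\eps_j^{-1}$, so $(\de_0/\ka)^{1/(2\b)}$ is polynomially large in $\eps_j^{-1}$ while $(N/c_0)^{1/\ga}$ is only logarithmic); for these blocks the $\de_0/(4w_a^{2\b})$ shift swamps the A3 gap and your bound $\|L^{-1}\|\leq(\ka(1+|w_a-w_b|))^{-1}$ does not follow. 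The paper's proof resolves this by invoking A3 with a gap $2\eta$ only for $w_a\geq(\de_0/2\eta)^{1/(2\b)}$, handling the remaining blocks (finitely many, but their number depends on $\de_0/\eta$, not just on $N$) with A2(iv) and Lemma~\ref{EK}, and then \emph{optimizing} $\eta$ as a power of $\ka$ so that the two measure losses $N^{\a_1}(\eta/\de_0)^{\a_2}$ and $N^{n+d+2}(\de_0/\eta)^{(4+d)/(2\b)}\ka/\de_0$ balance; this balancing is precisely where the exponent $\exp'$ and its dependence on $\b,d,\a_2$ come from, and it is absent from your argument.

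A smaller, fixable point: for the blocks you treat with Lemma~\ref{EK} (both in the $\xi\xi/\eta\eta$ equations and in the low regime of the $\xi\eta$ equation) you only claim $\|L^{-1}\|\leq\ka^{-1}$, which loses the factor $(1+|w_a-w_b|)^{-1}$ (resp.\ $(1+w_a+w_b)^{-1}$) needed for the $|\cdot|_{\b+}$ bound \eqref{estim-homo4S}; since on those blocks $1+|w_a-w_b|$ can be of size $(N/c_0)^{1/\ga}$, this would pollute \eqref{estim-homo4S} with an $N$-dependent factor. The remedy, as in the paper, is to apply Lemma~\ref{EK} with the threshold $\eps=\ka(1+|w_a-w_b|)$ (or $\ka(1+w_a+w_b)$), which only enlarges the excluded set by a harmless factor still summable over the finitely many blocks involved.
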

\proof
As in the previous section, and using the same notation, we re-write \eqref{homo4ter} in complex variables. 
So we introduce 
$S={}^t\! U S_{\zeta,\zeta} U$, $R={}^t\! U R_{\zeta\zeta} U$ and $F={}^t\! U F_{\zeta\zeta} U$.
\\
 By construction, $ S^b_a\in \M_{2\times 2}$ for all $a,b\in\L$. Let us denote  $$ S^b_a= \left( \begin{array}{cc} (S^b_a)^{\xi\xi} & (S^b_a)^{\xi\eta} \\   (S^b_a)^{\xi\eta} & (S^b_a)^{\eta\eta}\end{array}\right) $$ and then $$S^{\xi\xi}=((S^b_a)^{\xi\xi})_{a,b\in\L}, \quad S^{\xi\eta}=((S^b_a)^{\xi\eta})_{a,b\in\L},\quad S^{\eta\eta}=((S^b_a)^{\eta\eta})_{a,b\in\L}.$$ We use similar notations for $R$, $B$ and $F$.\\
In this notation  \eqref{homo4ter}  decouples into three equations
\footnote{Actually \eqref{homo4ter}   decomposes into four scalar equations but the fourth one is the transpose of the third one.}
\begin{align*}
&\langle \nabla_\theta  S^{\xi\xi},\om \rangle +i Q S^{\xi\xi}+ i S^{\xi\xi}\ {}^tQ= B^{\xi\xi}- F^{\xi\xi}+ R^{\xi\xi},\\ 
&\langle \nabla_\theta  S^{\eta\eta}, \om\rangle  -i\ {}^tQ S^{\eta\eta} -i  S^{\eta\eta} Q= B^{\eta\eta}- F^{\eta\eta}+ R^{\eta\eta},\\ 
&\langle \nabla_\theta  S^{\xi\eta},\om\rangle  + iQ S^{\xi\eta} -  iS^{\xi\eta} Q= B^{\xi\eta}- F^{\xi\eta}+ R^{\xi\eta}\,,\end{align*}
where we recall that $Q$ is the scalar valued matrix associated to $A$ via the formula \eqref{Q}. {
The first and the second equations are of the same type, so we focus on the resolution of   the second and the 
third equations.}
Written in Fourier variables,  they read
 \begin{align} 
\label{homo4-2}
&i(\langle k ,\om\rangle -{}^tQ)\hat S^{\eta\eta}(k) - i\hat S^{\eta\eta}(k) Q= \delta_{k,0}B^{\eta\eta}-\hat F^{\eta\eta}(k)+\hat R^{\eta\eta}(k),\quad k\in \Z^n,\\ \label{homo4-3}
&i(\langle  k,\om\rangle +Q)\hat S^{\xi\eta}(k) - i\hat S^{\xi\eta}(k) Q= \delta_{k,0}B^{\xi\eta}-\hat F^{\xi\eta}(k)+\hat R^{\xi\eta}(k),\quad k\in \Z^n   \,,
\end{align}
where  $\delta_{k,j}$ denotes the Kronecker symbol. 

\medskip

{\it Equation \eqref{homo4-2}. } We  { chose $B^{\eta\eta}=0$ and}
 decompose the equation into ``components'' on each product block
$[a]\times[b]$:
\be\label{homo+}
L\, \hat S_{[a]}^{[b]}(k) 
= i\hat F_{[a]}^{[b]}(k,\r){ -i}\hat R_{[a]}^{[b]}(k)
\ee
where we have suppressed the upper index ${\eta\eta}$ and the operator $L:= L(k,{[a]},{[b]},\r)$ is the linear   Hermitian operator, acting in the space of complex
$[a]\times[b]$-matrices defined by
$$
L\, M= \big(
\langle k, \om(\r)\rangle  - {}^tQ_{[a]}(\r)\big) M 
- M Q_{[b]}(\r).$$
 The matrix $Q_{[a]}$ can be diagonalized in an orthonormal basis:
 $${}^tP_{[a]}Q_{[a]}P_{[a]}=D_{[a]}.$$
Therefore denoting $\hat{S'}_{[a]}^{[b]}={}^tP_{[a]}S_{[a]}^{[b]}P_{[b]}$, $\hat{F'}_{[a]}^{[b]}={}^tP_{[a]}F_{[a]}^{[b]}P_{[b]}$ and $\hat{R'}_{[a]}^{[b]}={}^tP_{[a]}R_{[a]}^{[b]}P_{[b]}$ the homological equation \eqref{homo+} reads
\be\label{homo++}( \langle k,\om\rangle +D_{[a]})\hat{S'}_{[a]}^{[b]}(k)-{S'}_{[a]}^{[b]}(k)D_{[b]}=i\hat{F'}_{[a]}^{[b]}(k){ -i}\hat {R'}_{[a]}^{[b]}(k).\ee
This equation can be solved term by term:
\be\label{R'}\hat{R'}_{j\ell}(k)= \hat{F'}_{j\ell}(k),\quad j\in[a],\ \ell\in[b],\ |k|> N\ee
and
\be\label{S'}\hat{S'}_{j\ell}(k)=\frac i{\langle k,\om(\r)\rangle \ -\alpha_j(\r)-\beta_\ell(\r)}\hat{F'}_{j\ell}(k),\quad j\in[a],\ \ell\in[b],\ |k|\leq N\ee
where $\alpha_j(\r)$ and $\beta_\ell(\r)$ denote eigenvalues of $Q_{[a]}(\r)$ and $Q_{[b]}(\r)$, respectively. 
As $Q_{[a]}=\diag\{\la\ : a\in[a]\}+B_{[a]}$ with $B$ Hermitian,  using  hypothesis \eqref{ass2}
 we get that 
\be\label{alpha-a}|(\alpha_j(\r)+\beta_\ell(\r)) -(\la(\r)+\lb(\r))|\leq
 \frac{\de_0}{4}+ \frac{\de_0}{4}\le \frac{\de_0} {2}.
 \ee
It follows as in the proof of Proposition \ref{prop:homo3}, using Lemma~\ref{EK}, relation
\eqref{laequiv}, Assumption A2(iii)
and \eqref{ass2},  that there exists a subset 
$\D_2=\D_2(h,\ka,N)\subset \D$,  satisfying
 $$\meas (\D\setminus \D_2)\leq  C\Big(\frac{ N}{c_0}\Big)^{\exp} \frac{\ka}{\de_0}, $$
such that 
$$| \langle k,\om(\r)\rangle \ -\alpha_j(\r)-\beta_\ell(\r)|\geq {\ka}(1+|w_a+w_b|),$$
holds for all $\r\in\D_2$, all $|k|\le N$, all $j\in[a],\ \ell\in[b]$ and all $[a],[b]\in\hat\L$.
Thus for $\r\in\D_2$ we obtain
$$|\hat S'_{j\ell}|=\frac 1{ \ka (1+|w_a+w_b|)}|\hat F'_{j\ell}|$$
which leads to 
\begin{align*}\left\| S_{[a]}^{[b]} \right\|_{HS}=\left\| {S'}_{[a]}^{[b]} \right\|_{HS}&\leq \frac 1{ \ka (1+|w_a+w_b|)}\left\| {F'}_{[a]}^{[b]} \right\|_{HS}\\
&=\frac 1{ \ka (1+|w_a+w_b|)}\left\| F_{[a]}^{[b]} \right\|_{HS}.\end{align*}
Therefore we obtain a solution  satisfying for any $|\Im\theta|<\s'$
\begin{align*}
| S(\theta)|_{\b+}\lsim &\frac{1}{\ka (\s-\s')^{n}}
\sup_{|\Im\theta|<\s}| F(\theta)|_{\b},\\
|R(\theta)|_{\b}\lsim &\frac{e^{-\frac12 (\s-\s')N}}{ (\s-\s')^{n}}
\sup_{|\Im\theta|<\s}| F(\theta)|_{\b}.
\end{align*}

The estimates for the derivatives with respect to $\r$ are obtained by differentiating (\ref{S'}) 
and (\ref{R'}).

\medskip

{\it Equation \eqref{homo4-3}. } 
It remains to consider  \eqref{homo4-3} which decomposes into the ``components''
over the product blocks $[a]\times[b]$:
\be\label{homo-}
\begin{split}
L\, \hat S_{[a]}^{[b]}(k)& :=
 \langle k, \om(\r)\rangle \ 
\hat S_{[a]}^{[b]}(k) +Q_{[a]}(\r)\hat S_{[a]}^{[b]}(k) \\&- \hat S_{[a]}^{[b]}(k) Q_{[b]}(\r)= 
-i\delta_{k,0}B_{[a]}^{[b]}+i \hat F_{[a]}^{[b]}(k,\r)-i\hat R_{[a]}^{[b]}(k),
\end{split}
\ee
--  here $L=L(k,[a],[b],\rho)$, and 
 we have suppressed the upper index ${\xi\eta}$. We use the notation from the study
of equation \eqref{homo4-2} above and we assume without loss of generality that $w_a\leq w_b$.
Now  $L(k,[a],[b],\rho)$ is a linear operator acting in the space of complex  $[a]\times[b]$-matrices. Its eigenvalues are 
\be\label{divisors}
\langle k,\omega(\rho)\rangle +\alpha_j(\rho)-\beta_\ell(\rho),\quad j\in [a],\ \ell\in [b].
\ee
To estimates these eigenvalues we have to distinguish two cases, depending on whether $k= 0$ or not.

\medskip

{\it The case $k=0$.}
In this case   we distinguish  whether $w_a=w_b$ or not.

\medskip

\noindent When $w_a\neq w_b$, we use   \eqref{ass2} and \eqref{la-lb} to get 
$$|\alpha_j(\r)-\beta_\ell(\r)|\geq c_0|w_a-w_b|-\frac{\de_0}{4w_a^{2\b}}
-\frac{\de_0}{4w_b^{2\b}}\geq \ka|w_a-w_b|.$$
This last estimate allows us to solve \eqref{homo-}, choosing 
$$B_{[a]}^{[b]}=\hat R_{[a]}^{[b]}(0) =0$$
and
$$\hat S_{[a]}^{[b]}(0)= L(0,[a],[b],\r)^{-1}\hat F_{[a]}^{[b]}(0)$$
with 
$$
\left\|\hat S_{[a]}^{[b]}(0)\right\|_{HS}\lsim  \frac{1}{c_0|w_a-w_b|}\left\|\hat F_{[a]}^{[b]}(0)\right\|_{HS}.
$$
This implies that 
$$
| \hat S(0)|_{\b+}\lsim \frac{1}{\ka}| \hat F(0)|_{\b},$$
and the estimates of the derivatives with respect to $\r$ are obtained by differentiating the expression for
$\hat S_{[a]}^{[b]}(0)$.

\medskip

\noindent When $w_a=w_b$, we cannot control $|\alpha_j(\r)-\b_\ell(\r)|$ from below, 
so  we define
$$\hat S_{[a]}^{[b]}(0)=0,\quad \hat R_{[a]}^{[b]}(0)=0
$$
and
$$
B_{[a]}^{[b]}=\hat F_{[a]}^{[b]}(0).$$
This gives the estimates
$$
|B|_{\b}\leq | \hat F(0)|_{\b}.$$
The estimates of the derivatives with respect to $\r$ are obtained by differentiating the expressions for
$B$.

\medskip

{\it The case $k\not=0$. }  If $k\neq 0$ we face the small divisors \eqref{divisors} with 
non-trivial $\langle k,\omega\rangle$. 
 Using Hypothesis A3, there is a set
 $\D_2'=\D(\om, 2\eta, N)$, 
 $$\meas(\D\setminus {\D_2'})\lsim N^{\a_1}(\frac{\eta}{\delta_0})^{\a_2} ,$$
 such that for all $\r\in\D_2'$ and  $0<|k|\leq N$ 
  $$|\langle k, \om(\r)\rangle \ -\la(\r)+\lb(\r)|\geq 2\eta(1+|w_a-w_b|).$$
 By \eqref{ass2} this implies
 \begin{align*}|\langle k, \om(\r)\rangle \ -\alpha_j(\r)+\beta_\ell(\r)|&\geq 2\eta(1+|w_a-w_b|)-\frac{\de_0}{4w_a^{2\b}}
 -\frac{\de_0}{4w_b^{2\b}}\\
& \geq \eta(1+|w_a-w_b|)\end{align*}
 if
$$ w_b\geq w_a\geq \Big( \frac{\de_0}{2\eta}\Big)^{\frac1{2\b}}.$$
Let now $ w_a\lsim  ( \frac{\de_0}{2\eta})^{\frac1{2\b}}$. We note that $|\langle k, \om(\r)\rangle \ -\la(\r)+\lb(\r)|\leq 1$ implies that 
$w_b\lsim( \frac{\de_0}{2\eta})^{\frac1{2\b}}+ C|k|\lsim ( \frac{\de_0}{2\eta})^{\frac1{2\b}}+ N$.
Using Assumption A2 (iv) and condition \eqref{ass2} we get 
as in section 5.3 that 
\be\label{inversebis}
 |\langle k,\omega(\rho)\rangle +\alpha_j(\rho)-\beta_\ell(\rho)|\geq {\ka}(1+|w_a-w_b|)\quad \forall j\in[a],\ \forall \ell\in[b]\ee
 holds
outside a set $F_{[a],[b],k}$ of measure $\lsim w_a^dw_b^{d}(1+|w_a-w_b|)\ka\de_0^{-1}$.

If $F$ is the union of  $F_{[a],[b],k}$ for $|k|\leq N$, $[a],[b]\in\hat\L$ such that $ w_a\lsim  ( \frac{\de_0}{2\eta})^{\frac1{2\b}}$ and $w_b \lsim ( \frac{\de_0}{2\eta})^{\frac1{2\b}}+ N$ respectively, we have 
 \begin{align*}\meas(F)&\lsim
( \frac{\de_0}{2\eta})^{\frac1{2\b}}\big(( \frac{\de_0}{2\eta})^{\frac1{2\b}}+ N\big)^{d+1}\frac\ka{\de_0}N^n\big(( \frac{\de_0}{2\eta})^{\frac1{2\b}}+ N\big)( \frac{\de_0}{2\eta})^{\frac{1}{2\b}}\\
&\lsim N^{n+d+2} ( \frac{\de_0}{\eta})^{\frac{4+d}{2\b}}\frac\ka{\de_0}\,.
\end{align*}
Now we choose $\eta$ so that
$$(\frac{\eta}{\delta_0})^{\a_2}=( \frac{\de_0}{\eta})^{\frac{4+d}{2\b}}\frac\ka{\de_0}\quad \text{i.e. }\frac\eta{\de_0}=\big(\frac\ka{\de_0}\big)^{\frac{2\b}{4+d+2\b \a_2}}.
$$
Then, as $\b\leq 1$, $\eta\leq \ka$ and we have
$$\meas(F)\lsim N^{n+d+2} \big(\frac\ka{\de_0}\big)^{\frac{2\b\a_2}{2+d+2\b\a_2}}\,.$$
Let $\D_3=\D_2\cap\D_2'\setminus F$, we have
$$\meas(\D\setminus\D_3)\lsim N^{exp}\big(\frac\ka{\de_0}\big)^{\frac{2\b\a_2}{2+d+2\b\a_2}}$$
and by construction  for all $\r\in\D_3$,
$0<|k|\le N$, $a,b\in\L$ and $j\in[a],\ \ell\in[b]$ we have
$$|\langle k, \om(\r)\rangle \ -\alpha_j(\r)+\beta_\ell(\r)| \geq \ka(1+|w_a-w_b|).$$
 Hence, following the same procedure of diagonalization of $L$ as in the resolution of equation \eqref{homo4-2},   the solution
$$\hat S_{[a]}^{[b]}(k)=\chi_{0<|k|\le N}\,
 L(k,[a],[b],\r)^{-1}\hat F_{[a]}^{[b]}(k),
 $$
and
$$\hat R_{a}^{b}(k)=\chi_{|k|> N}\,
 \hat F_{a}^{b}(k)\,,
 $$
satisfies
\begin{align*}
| S(\theta) |_{\b+}\lsim &\frac{1}{\ka (\s-\s')^{n}}
\sup_{|\Im\theta|<\s}| F(\theta)|_{\b},\\
|R|_{\b}\lsim &\frac{e^{-\frac12(\s-\s')N}}{ (\s-\s')^{n}}
\sup_{|\Im\theta|<\s}| F(\theta)|_{\b}.
\end{align*}
for $\theta\in \T^d_{\s'}$.
The estimates of the derivatives with respect to $\r$ are obtained by differentiating the expressions
for $S$ and $R$.

In this way we have constructed a solution $S_{\zeta \zeta}, R_{\zeta \zeta}, B$  of 
the fourth component of the homological equation which satisfies all required estimates.
To guarantee that it is real, as at the end of Section~\ref{s5.3} we replace  $S_{\zeta \zeta}, R_{\zeta \zeta}, B$
 by their real parts
(i.e., replace $S_{\zeta \zeta}(\theta,\rho)$ by $\frac12(S_{\zeta \zeta}(\theta,\rho) + 
\bar S_{\zeta \zeta}( \bar\theta,\rho) )$, etc.) 
 \endproof

\subsection{Summing up}

Let
$$h=\om(\r)\cdot r +\frac 1 2 \langle \zeta,  A(\r)\zeta\rangle$$
where $\r\to\om(\r)$ and $\r\to A(\r)$ are $C^1$ on $\D$ and $A$ is on normal form.
\begin{proposition}\label{thm-homo}

Assume
\be\label{Aom}
 |\partial_\r^j (A(\r)-A_0(\r))|_\b \le \frac{\delta_0}{4},\quad |\partial_\r^j(\om-\om_0)|\leq \delta_0\ee
for $j=0,1$  and $\r\in \D$.
Let $f\in \Tc^{s,\b}(\s,\mu,\D)$, $0<\ka\leq \frac{\de_0}2$ and $N\ge 1$. 
 Then there exists a subset $\D'=\D'(h, \ka,N)\subset \D$, satisfying
 $$\meas (\D\setminus \D')\leq  
 CN^{\exp} \Big(\frac{\ka}{\delta_0}\Big)^{\exp'}, $$
 and there exist real jet-functions 
   $S\in\Tc^{s,\b+}(\s',\mu,\D')$ , $R\in\Tc^{s,\b}(\s',\mu,\D')$ and a  normal form
$$\hat h=\( f(\cdot,0;\r) \)+\( \nabla_r f(\cdot,0;\r) \)\cdot r+ \frac 1 2 \langle \zeta, B(\r)\zeta\rangle,$$
  such that
$$
\{ h,S \}+f^T=\hat h+R.
$$
Furthermore, for all  $0\le\s'<\s$
\be\label{estim-B}|\partial_\r^jB(\r)|_{\b}\le
 [f]^{s,\b}_{\s,\mu,\D'},\quad j=0,1 \text{ and }\r\in\D'
\ee
\be\label{estim-S}
[S ]^{s,\b+}_{\s',\mu,\D'} \leq 
C\frac{1}{\ka^2 (\s-\s')^{n}}[f]^{s,\b}_{\s,\mu,\D'}\ee
 
\be\label{estim-R}
[R]^{s,\b}_{\s',\mu,\D'}\leq 
C
\frac{e^{-\frac12(\s-\s')N}}{ (\s-\s')^{n}}[f]^{s,\b}_{\s,\mu,\D'}.\ee
The two exponents $\exp$ and $\exp'$ are positive numbers depending on $n$, $d$, $\a_1$, $a_2$, $\ga$, $\b$. The constant $C$  depends also on $h_0$.
 
\end{proposition}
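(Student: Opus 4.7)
The plan is to split the proof of $\{h,S\}+f^T = \hat h+R$ into four scalar/vector/operator sub-equations solved separately by Propositions~\ref{prop:homo12}, \ref{prop:homo3} and \ref{prop:homo4}, and then assemble the outputs into a single jet-function. Writing
$$
S=S_\theta+\langle S_r,r\rangle+\langle S_\zeta,\zeta\rangle+\tfrac12\langle S_{\zeta\zeta}\zeta,\zeta\rangle
$$
and similarly for $R$, and expanding the Poisson bracket $\{h,S\}$ as in Section~\ref{ss6.1}, the matching of jet coefficients produces exactly the four equations \eqref{homo1}--\eqref{homo4}. The zero-mean solvability of the first two forces the choice $c(\r)=\( f(\cdot,0;\r) \)$ and $\chi(\r)=\( \nabla_r f(\cdot,0;\r) \)$; the normal-form operator $B(\r)$ is then whatever Proposition~\ref{prop:homo4} produces from \eqref{homo4}.

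Once the four equations are separated, I would apply each sub-proposition in turn. The first part of \eqref{Aom}, namely $|\om-\om_0|_{\Ca^1(\D)}\le\delta_0$, is exactly what the three propositions require of $\om$. For $A$ I need to extract the pointwise block-wise hypotheses \eqref{ass1}--\eqref{ass2} from the single bound $|A-A_0|_\b\le\delta_0/4$: restricting the supremum defining $|\cdot|_\b$ to a diagonal block $a=b$ yields $w_a^{2\b}\|\partial_\r^j(A-A_0)_{[a]}\|_{HS}\le\de_0/4$, which is precisely \eqref{ass2}, and a fortiori \eqref{ass1} since $w_a\ge 1$. Writing $\D_1,\D_2,\D_3$ for the bad-set complements produced by Propositions~\ref{prop:homo12}, \ref{prop:homo3} and \ref{prop:homo4} respectively, I set $\D'=\D_1\cap\D_2\cap\D_3$; the claimed measure bound follows by summing the three individual estimates and keeping the worst pair of exponents $(\exp,\exp')$.

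The rest is bookkeeping of norms. Recalling from \eqref{norm2} that $[\cdot]^{s,\b}_{\s,\mu,\D'}$ is, up to the relevant $\mu$-weights, the supremum over $|f|$, $|\nabla_r f|$, $\|\nabla_\zeta f\|_s$, $|\nabla_\zeta f|_\b$ and $|\nabla^2_\zeta f|_\b$, and that $[\cdot]^{s,\b+}_{\s,\mu,\D'}$ adds the analogous $|\cdot|_{\b+}$-bounds, each of the four components $S_\theta,S_r,S_\zeta,S_{\zeta\zeta}$ delivered by the sub-propositions matches exactly one of these entries. The mixed term in $\nabla_\zeta S=S_\zeta+S_{\zeta\zeta}\zeta$ is handled by Lemma~\ref{product}: parts (iii) and (vi)--(vii) give $|S_{\zeta\zeta}\zeta|_\b$ and $|S_{\zeta\zeta}\zeta|_{\b+}$ controlled by $|S_{\zeta\zeta}|_{\b+}\cdot\|\zeta\|_s\le|S_{\zeta\zeta}|_{\b+}\cdot\mu$. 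The bound \eqref{estim-B} is immediate from the construction of $B$ in Proposition~\ref{prop:homo4}.

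All the genuinely hard analysis---Fourier truncation, separation of the small divisors via Hypotheses~A2--A3, and especially the Hilbert-Schmidt block-wise inversion through Lemma~\ref{EK} that underpins Proposition~\ref{prop:homo4}---is already packaged in the three sub-propositions, so the present step has no real obstacle; the only substantive verification is the extraction of the block-wise smallness of $A-A_0$ from the aggregate $|\cdot|_\b$-bound in \eqref{Aom}, and everything else is a routine collection of the previously established estimates.
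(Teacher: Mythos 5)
Your proposal is correct and coincides with the paper's own (implicit) argument: Proposition \ref{thm-homo} is stated precisely as the direct combination, via the four-component decomposition of Section \ref{ss6.1}, of Propositions \ref{prop:homo12}, \ref{prop:homo3} and \ref{prop:homo4}, with $\D'$ taken as the intersection of the three parameter sets and the measure estimates summed. Your extraction of the block-wise hypotheses \eqref{ass1}--\eqref{ass2} from the aggregate bound \eqref{Aom} (restricting the $|\cdot|_\b$-supremum to diagonal blocks and using $w_a\ge 1$), and your use of Lemma \ref{product} to control the mixed term $S_{\zeta\zeta}\zeta$ in $\nabla_\zeta S$, are exactly the bookkeeping steps the paper leaves to the reader.
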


\section{Proof of the KAM Theorem.}
The theorem \ref{main} is proved by an iterative KAM procedure. We first describe the general step of this KAM procedure.
\subsection{The KAM step}
Let $h$ be a normal form Hamiltonian
$$
h= \om\cdot r +\frac 1 2 \langle \zeta, A(\om)\zeta\rangle$$
with $A$ on normal form, $A-A_0\in\M_\b$ and satisfying \eqref{Aom}. Let $f\in \Tc^{s,\b}(\D,\s,\mu)$ be a (small) Hamiltonian perturbation.  
Let $S=S^T \in \Tc^{s,\b+}(\D',\s',\mu)$ be the  solution of the homological equation
\be \label{eq-homobis}
\{ h,S \}+f^T=\hat h+R.
\ee
defined in Proposition \ref{thm-homo}.
Then defining 
 $$ h^+:=h+\hat h,$$
we get
$$h\circ \Phi^1_S=h^++f^+$$
with
\be \label{f+}
f^+= R+(f-f^T)\circ \Phi^1_S+\int_0^1\{ (1-t)(\hat h+R)+tf^T,S \}\circ \Phi^t_S\ \dd t.
\ee
The following Lemma gives an estimation of the new perturbation:

\begin{lemma}\label{lem-f+}
Let $\ka>0$, $N\geq 1$, $0<\s'<\s\leq 1$ and $0<2\mu'< \mu\leq 1$. Assume that $\D'\subset \D$, that $f\in\Tc^{s,\b}(\D,\s,\mu)$, that $R$ satisfies \eqref{estim-R}  and that $S=S^T$ belongs to $\Tc^{s,\b+}(\D',\s'',\mu)$ with $\s''=\frac{\s+\s'}{2}$ and satisfies
\begin{equation}
\label{hypo-S} 
[S]^{s,\b+}_{\D',\s'',\mu}\leq \frac 1{16} \mu^2 ( \s- \s').
\end{equation}
Then the function $f^+$ given by formula \eqref{f+} belongs to $ \Tc^{s,\b}(\D',\s',\mu')$ and
\begin{align}\begin{split}\label{estim-f+}
[f^+]^{s,\b}_{\D',\s',\mu'}\leq C\left(  \frac{e^{-\frac 1 2(\s-\s')N}}{  (\s-\s')^{n}}+ \left( \frac{\mu'}{\mu}\right)^3 \right. 
+\left.  \frac{1}{\ka^2\mu^2 (\s-\s')^{n+1}}[f]^{s,\b}_{\D,\s,\mu}\right) [f]^{s,\b}_{\D,\s,\mu}
\end{split}\end{align}
where $C$  depends on $h_0$.
\end{lemma}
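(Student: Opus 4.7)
The plan is to use the decomposition \eqref{f+} together with the fundamental theorem $h\circ\Phi^1_S = h + \int_0^1\{h,S\}\circ\Phi^t_S\,dt$ applied to $h=f-f^T$, which rewrites
\begin{equation*}
f^+ = R + (f-f^T) + \int_0^1 \{G_t,S\}\circ \Phi^t_S\,dt,\qquad G_t:=f-f^T+(1-t)(\hat h+R)+tf^T.
\end{equation*}
This rearrangement isolates the uncomposed Taylor remainder $f-f^T$, which is directly cubic-small at the target radius, and dumps every flow-composition into Poisson brackets that will contribute only quadratically in $[f]^{s,\b}_{\s,\mu,\D}$.

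Next I would estimate the three pieces separately. The linear-in-$[f]$ contributions are immediate: \eqref{estim-R} of Proposition~\ref{thm-homo} supplies the exponential tail $e^{-(\s-\s')N/2}/(\s-\s')^n\,[f]$, while the second half of Proposition~\ref{lemma:jet} supplies $[f-f^T]^{s,\b}_{\s,\mu',\D'}\le C(\mu'/\mu)^3[f]^{s,\b}_{\s,\mu,\D'}$, from which the $(\mu'/\mu)^3$ factor in \eqref{estim-f+} is obtained after monotone restriction of the $\theta$-strip from $\s$ down to $\s'$. For the integral term, the first half of Proposition~\ref{lemma:jet} together with \eqref{estim-B}--\eqref{estim-R} of Proposition~\ref{thm-homo} yields $[G_t]^{s,\b}_{\s,\mu,\D'}\le C[f]^{s,\b}_{\s,\mu,\D'}$ uniformly in $t$, and \eqref{estim-S} gives $[S]^{s,\b+}_{\s'',\mu,\D'}\le C\ka^{-2}(\s-\s')^{-n}[f]^{s,\b}_{\s,\mu,\D'}$. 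Combining these through Lemma~\ref{lemma-poisson} on an intermediate strip $\s^\star\in(\s',\s'')$ produces a factor $[f]^2/(\ka^2\mu^2(\s-\s')^{n+1})$, and the symplectic composition with $\Phi^t_S$ is absorbed by Proposition~\ref{Summarize}(ii), whose hypothesis \eqref{hyp-f''} is met thanks to \eqref{hypo-S} combined with $2\mu'<\mu$ (so $\mu-\mu'>\mu/2$ and the composition factor $\mu/(\mu-\mu')$ is bounded by $2$).

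The main obstacle is twofold. Technically, Lemma~\ref{lemma-poisson} is stated only for jet arguments, so I will need to observe that its proof extends verbatim to a Poisson bracket of the jet $S$ with the non-jet function $f-f^T$, once $\nabla_\theta(f-f^T)$ and $\nabla_r(f-f^T)$ are controlled by Cauchy estimates on a slightly smaller strip (which is already the source of the $(\s-\s')^{-1}$ loss). Conceptually, the intermediate strip $\s^\star$ must be chosen so that \emph{simultaneously} the Poisson-bracket loss $(\s''-\s^\star)^{-1}$ stays of order $(\s-\s')^{-1}$ and the composition hypothesis of Proposition~\ref{Summarize}(ii) for the reduction $(\s^\star,\mu)\to(\s',\mu')$ remains compatible with \eqref{hypo-S}; taking $\s^\star$ a fixed convex combination of $\s'$ and $\s''$ achieves this up to constants depending only on the ratio $(\mu-\mu')/\mu\in(1/2,1)$. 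Adding the three resulting contributions then gives exactly the bound \eqref{estim-f+}.
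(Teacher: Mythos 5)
Your decomposition is algebraically correct and the three resulting contributions do match the three terms of \eqref{estim-f+}, but your route differs from the paper's and trades away its main structural convenience. The paper keeps $(f-f^T)\circ\Phi^1_S$ composed: since Lemma \ref{composition} (Proposition \ref{Summarize}(ii)) applies to \emph{arbitrary} $h\in\Tb$, not only to jets, one bounds $[f-f^T]^{s,\b}_{\s,2\mu',\D'}\le C(\mu'/\mu)^3[f]^{s,\b}_{\s,\mu,\D}$ by Proposition \ref{lemma:jet} and absorbs the flow at the cost of a bounded factor $\mu/(\mu-\mu')\le 2$; consequently the only Poisson brackets ever needed are $\{g_t,S\}$ with $g_t=(1-t)(\hat h+R)+tf^T$ and $S$ \emph{both} jets, so Lemma \ref{lemma-poisson} applies exactly as stated. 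Your rearrangement via $(f-f^T)\circ\Phi^1_S=(f-f^T)+\int_0^1\{f-f^T,S\}\circ\Phi^t_S\,dt$ removes the composition from the cubic term but inserts the non-jet function $f-f^T$ into the bracket, so you must extend Lemma \ref{lemma-poisson} beyond its hypotheses. That extension is feasible, but it is not ``verbatim'', and the terms you name ($\nabla_\theta(f-f^T)$, $\nabla_r(f-f^T)$) are the easy ones: the delicate point is the $\M_\b$-bound on $\nabla^2_\zeta\langle J\nabla_\zeta(f-f^T),\nabla_\zeta S\rangle$ and on $\nabla^2_\zeta\bigl(\nabla_r(f-f^T)\cdot\nabla_\theta S\bigr)$, which bring in third-order $\zeta$-derivatives and mixed $r\zeta\zeta$-derivatives of $f$ that the class $\Tc^{s,\b}$ does not control directly. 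These must be recovered by Cauchy estimates for the $\M_\b$- and $L_\b$-valued holomorphic maps $x\mapsto\nabla^2_\zeta f$, $x\mapsto\nabla_\zeta f$ in the $\zeta$- and $r$-directions (harmless here only because $2\mu'<\mu$ leaves room), and each new product must be re-checked against Lemma \ref{product}, using that it is $S$, not $f-f^T$, that carries the $\b+$ norms. So either supply this extended bracket lemma in full, or simply follow the shorter path: compose $f-f^T$ with $\Phi^1_S$ directly and reserve Lemma \ref{lemma-poisson} for the jet--jet bracket $\{g_t,S\}$, which is all the estimate \eqref{estim-f+} requires.
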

\proof
Let us denote the three terms in the r.h.s. of \eqref{f+} by $f^+_1$, $f^+_2$ and $f^+_3$. In view of \eqref{estim-R}, we have that $[f^+_1]^{\ga',D}_{\Om',\s',\mu'}$ is controlled by the first term in r.h.s. of \eqref{estim-f+}. \\
By Lemma \ref{lemma:jet}, we get 
$$[f-f^T]^{\ga',D}_{\Om',\s,2\mu'}\leq C \left( \frac{\mu'}{\mu}\right)^3 [f]^{\ga,D}_{\Om,\s,\mu}.$$
By hypotheis $S=S^T $ belongs to $ \Tc^{s,\b+}(\D',\s',\mu)$ and satisfies \eqref{hypo-S} which implies $[S]^{s,\b+}_{\D',\s'',\mu}\leq \frac 1{2} (\mu-\mu')^2 ( \s''- \s')$ since $2\mu'<\mu$. Therefore 
 by Proposition \ref{composition} and since $2\mu'\leq 2 (\mu-\mu')$, $[f^+_2]^{s,\b}_{\D',\s',\mu'}$ is controlled by the second term in r.h.s. of \eqref{estim-f+}. 
 
 \smallskip
 
It remains to control $[f^+_3]^{s,\b}_{\D',\s',\mu'}$. To begin with, $g_t:=(1-t)(\hat h+R)+tf^T$ is jet function in $ \Tc^{s,\b}(\D,\s',\mu)$. Furthermore, defining  for $j=1,2$,
$$\s_j=\s'+j\frac{\s-\s'}{3} $$
 and  using \eqref{estim-R} we get (for $N$ large enough)
$$
[g_t]^{s,\b}_{\D',\s_2,\mu}\leq C\left( 1+3^{n}\frac{e^{-(\s-\s')N/6}}{  (\s-\s')^{n}}\right)[f]^{s,\b}_{\D,\s,\mu}\leq C [f]^{s,\b}_{\D,\s,\mu}.
$$
On the other hand $ S\in \Tc^{s,\b+}(\D',\s_2,\mu)$ is also a jet-function and satisfies 
$$
[S]^{s,\b+}_{\D',\s_2,\mu}\leq \frac{C}{\ka^2 (\s-\s')^{n}}[f]^{s,\b}_{\D,\s,\mu}.
$$
Then using Lemma \ref{lemma-poisson} we have 
$$
[\{g_t,S\}]^{s,\b}_{\D',\s_1,\mu}\leq C\frac{1}{\ka^2\mu^2 (\s-\s')^{n+1}}([f]^{s,\b}_{\D,\s,\mu})^2.$$
We conclude the proof by Proposition \ref{composition}.
\endproof
\subsection{Choice of parameters}
To prove the main theorem we  construct the transformation $\Phi$ as the composition of infinitely-many transformations $S$ as in Theorem \ref{thm-homo}, i.e. for all $k\geq 1$ we construct iteratively $S_{k-1}$, $h_k$, $f_k$ following the general scheme \eqref{eq-homobis}--\eqref{f+} in such way
$$(h+f)\circ \Phi^1_{S_{k-1}}\circ\cdots\circ \Phi^1_{S_0}= h_{k}+f_{k}.$$
At each step $f_k\in \Tc^{s,\b}(\D_k,\s_k,\mu_k)$ with $[f_k]^{s,\b}_{\D_k,\s_k,\mu_k}\leq \eps_k$ , $h_k=\lan\om_k,r\ran +\frac 12\lan \zeta,A_k\zeta\ran$ is on normal form, the Fourier series are truncated at order $N_k$ and the small divisors are controlled by $\ka_k$. In this section we specify the choice of all the parameters for $k\geq 1$. \\
First we fix
$$\delta_0= \eps^{1/4},\quad \ka_0=\eps^{1/3}.$$
 We define  $\eps_0=\eps$, $\s_0=\s$,  $\mu_0=\mu$ and for $j\geq 1$ we choose
\begin{align*}
\s_{j-1}-\s_j=&C_* \s_0 j^{-2},\\
\mu_j=&\eps_{j-1}^{2/5}\mu_0,\\
N_j=&(\s_{j-1}-\s_j)^{-1}\ln \eps_j^{-1},\\
\ka_{j-1}=&\eps_{j-1}^{\frac 1 {64}} 
\end{align*}
where $(C_*)^{-1} =2\sum_{j\geq 1}\frac 1{j^2}$.\\
The numbers above are defined in terms of $\eps_j$'s which are defined inductively (with given $\eps=\eps_0$) accordingly to Lemma \ref{lem-f+} through the relation 
\begin{align}\begin{split}\label{eps+}
\eps_{j+1}= &C\left( \frac 12\eps_j (j+1)^{2n}\s_0^{-n}+\left(\frac{\eps_{j}}{\eps_{j-1}}\right)^\frac{6}{5}\right.  \\
&+ \left. (j+1)^{2(n+1)}\s_0^{-n-1}\mu_0^{-2}  \eps_j^{1/5- 1/ {32}} \right)\eps_j.
\end{split}\end{align}
\begin{lemma}\label{eps}
For all $j\geq 1$
$$
\eps_j\leq\eps_0^{(7/6)^j}$$
provided that $\eps=\eps_0>0$ is sufficiently small (in terms of $n$, $\s_0$, $\mu_0$).
\end{lemma}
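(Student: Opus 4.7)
The plan is to prove the stronger chain estimate
$$\eps_{j+1}\leq \eps_j^{7/6}\qquad(j\geq 0)$$
by induction on $j$; iterating this and using $0<\eps_0<1$ gives the stated bound $\eps_j\leq \eps_0^{(7/6)^j}$ at once. The base case $\eps_1\leq \eps_0^{7/6}$ is read off directly from \eqref{eps+} applied at the initial step (with the convention $\eps_{-1}=\eps_0$, so that the middle term in \eqref{eps+} contributes $\eps_0^{6/5}$); the dominant exponent is $187/160>7/6$, and the constant is absorbed for $\eps_0$ small.

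For the inductive step I assume $\eps_k\leq \eps_{k-1}^{7/6}$ for all $1\leq k\leq j$ and estimate each of the three summands inside the large bracket of \eqref{eps+} multiplied by $\eps_j$:

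\textbf{(i)} The first term gives a contribution $\tfrac{C}{2}(j+1)^{2n}\s_0^{-n}\eps_j^{2}$. Writing $\eps_j^{2}=\eps_j^{7/6}\cdot \eps_j^{5/6}$ and using $\eps_j^{5/6}\leq \eps_0^{(5/6)(7/6)^j}$, the double-exponential decay easily swallows the polynomial factor $(j+1)^{2n}$, so this term is $\leq \tfrac13\eps_j^{7/6}$ for $\eps_0$ small enough.

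\textbf{(ii)} From the inductive hypothesis $\eps_j\leq \eps_{j-1}^{7/6}$ one has $\eps_{j-1}\geq \eps_j^{6/7}$, hence $\eps_j/\eps_{j-1}\leq \eps_j^{1/7}$. Therefore
$$C\left(\frac{\eps_j}{\eps_{j-1}}\right)^{6/5}\eps_j \leq C\,\eps_j^{1+6/35}=C\,\eps_j^{41/35},$$
and $41/35>7/6$ (since $41\cdot 6=246>245=7\cdot 35$); the leftover factor $\eps_j^{41/35-7/6}=\eps_j^{1/210}$ absorbs the constant $C$ when $\eps_0$ is small.

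\textbf{(iii)} The third term gives $C(j+1)^{2(n+1)}\s_0^{-n-1}\mu_0^{-2}\,\eps_j^{1+1/5-1/32}=C(j+1)^{2(n+1)}\s_0^{-n-1}\mu_0^{-2}\,\eps_j^{187/160}$, and $187/160>7/6$ (since $187\cdot 6=1122>1120=7\cdot 160$), so again the super-exponential decay of $\eps_j$ absorbs both the polynomial $(j+1)^{2(n+1)}$ and the constants in $\s_0,\mu_0$.

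Summing the three contributions yields $\eps_{j+1}\leq \eps_j^{7/6}$, closing the induction. The only real subtlety is the tightness of the two inequalities $41/35>7/6$ and $187/160>7/6$, which shows why the constant $7/6$ in the statement is essentially the largest that both the jet/remainder splitting of Proposition~\ref{lemma:jet} (producing the $(\mu'/\mu)^3$ factor, hence the $6/5$ power) and the Cauchy loss in Proposition~\ref{thm-homo} (responsible for the $\eps_j^{1/5-1/32}$ gain, via the choices $\mu_j=\eps_{j-1}^{2/5}\mu_0$ and $\ka_{j-1}=\eps_{j-1}^{1/64}$) can accommodate simultaneously. The uniform-in-$j$ absorption of the polynomial prefactors is straightforward because $\eps_j^{\alpha}\leq \eps_0^{\alpha(7/6)^j}$ decays faster than any polynomial in $j$ grows, as soon as $\eps_0$ is sufficiently small in terms of $n,\s_0,\mu_0$.
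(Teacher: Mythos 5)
Your proof is correct and follows essentially the same route as the paper: an induction giving $\eps_{j+1}\le\eps_j^{7/6}$, with each of the three terms of \eqref{eps+} bounded by $\tfrac13\eps_j^{7/6}$, using $\eps_j/\eps_{j-1}\le\eps_j^{1/7}$ for the middle term and $1/5-1/32>1/6$ (your $187/160>7/6$) for the third — this is exactly the paper's argument, which in fact only records this inductive step. One small slip in your base case: with your convention $\eps_{-1}=\eps_0$ the middle bracket term equals $1$ and contributes $C\eps_0$, not $\eps_0^{6/5}$; the grounding consistent with the choice $\mu_1=\eps_0^{2/5}\mu_0$ in Lemma \ref{lem-f+} is to take that term equal to $(\mu_1/\mu_0)^3=\eps_0^{6/5}$ (i.e.\ $\eps_{-1}=1$), after which your estimate $\eps_1\le\eps_0^{7/6}$ stands.
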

\proof
It suffices to check that if
$$
\eps_k\leq\eps_{k-1}^{7/6}\quad \text{ for all } k\leq j,$$
then all the three terms in the r.h.s. of \eqref{eps+} are $\leq \frac 1 3 \eps_j^{7/6}$. For the first term this is   straightforward assuming $\eps$ small enough. To obtain the same estimate for the third term it suffices to notice that $1/6<1/5-1/32$. Concerning the second term, we have $\eps_{j}\leq \eps_j^{1/7}\eps_{j-1}$ and thus
$$C\left(\frac{\eps_{j}}{\eps_{j-1}}\right)^\frac{6}{5}\leq C\left(\eps_j^\frac{1}{7} \right)^\frac{6}{5}\leq C \eps_j^{\frac 6 {35}-\frac 6{36}}\eps_j^\frac{1}{6}\leq \frac 1 3 \eps_j^\frac{1}{6}$$
for $\eps$ small enough.
\endproof

\subsection{Iterative lemma}

Let set $\D_0=\D$, $h_0=\lan\om_0(\r), r\ran+\frac 1 2 \langle \zeta, A_0(\r)\zeta\rangle$ and  $f_0= f$ in such a way $[f_0]^{s,\b}_{\D_0,\s_0,\mu_0}\leq \eps_0$. For $k\geq 0$ let us denote
$$\O_{k}= \O^{s}(\s_{k},\mu_{k}).$$
\begin{lemma}\label{iterative} For $\eps$ sufficiently small depending on $\mu_0$, $\s_0$, $n$,$s$, $\b$ and $h_0$ we have the following:\\
For all $k\geq 1$ there exist $\D_k\subset\D_{k-1}$, $S_{k-1}\in \Tc^{s,\b+}(\D_k,\s_k,\mu_k)$, $h_k=\lan\om_k,r\ran +\frac 12\lan \zeta,A_k\zeta\ran$  on normal form and 
$f_k\in \Tc^{s,\b}(\D_k,\s_k,\mu_k)$ such that
\begin{itemize}
\item[(i)]  The mapping \be \label{Phik} \Phi_{k}(\cdot,\r)=\Phi^1_{S_{k-1}}\ :\ \O(k)\to \O(k-1), \quad \r\in \D_{k},\ k=1,2,\cdots\ee
is an analytic symplectomorphism linking the hamiltonian at step $k-1$ and the hamiltonian at the step  k, i.e.
$$(h_{k-1}+f_{k-1})\circ \Phi_{k}= h_k+f_k.$$
\item[(ii)] we have the estimates
\begin{align*}
\meas(\D_{k-1}\setminus \D_{k})&\leq \eps_{k-1}^\a,\\
[h_k-h_{k-1}]^{s,\b}_{\D_k,\s_k,\mu_k}&\leq C\eps_{k-1},\\
[f_k]^{s,\b}_{\D_k,\s_k,\mu_k}&\leq \eps_k,\\
\| \Phi_k(x,\r)-x\|_s&\leq \eps_{k-1}^{1/6},\ \text{ for } x\in \O(k),\ \r\in\D_k.
\end{align*}
\end{itemize}
The  exponents $\a$ is a positive number depending on $n$, $d$, $\a_1$, $a_2$, $\ga$, $\b$. The constant $C$  depends also on $h_0$..
 \end{lemma}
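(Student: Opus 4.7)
The plan is to prove the Iterative Lemma by induction on $k$, with the base case $k=0$ trivially given by the hypotheses of Theorem \ref{main}. For the inductive step, assume the conclusions hold for all indices $\le k-1$; I then need to construct the objects at step $k$ by a single application of Proposition \ref{thm-homo} followed by Lemma \ref{lem-f+}, and verify all four items in (ii).

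First I would verify that $h_{k-1}$ satisfies the normal form hypothesis \eqref{Aom} of Proposition \ref{thm-homo}. Writing the telescoping sums
\ben
A_{k-1}-A_0=\sum_{j=0}^{k-2}(A_{j+1}-A_j),\qquad \om_{k-1}-\om_0=\sum_{j=0}^{k-2}(\om_{j+1}-\om_j),
\een
and using the inductive bound $[h_{j+1}-h_j]^{s,\b}_{\s_j,\mu_j,\D_j}\le C\eps_j$ together with Lemma \ref{eps} (i.e.\ $\eps_j\le \eps_0^{(7/6)^j}$) yields $|A_{k-1}-A_0|_\b\le C\eps^{7/6}<\delta_0/4=\eps^{1/4}/4$ and analogously for $\om_{k-1}-\om_0$, for $\eps$ small. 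Now apply Proposition \ref{thm-homo} with $h=h_{k-1}$, $f=f_{k-1}$, parameters $(\ka_{k-1},N_{k-1})$ and intermediate radius $\s''=\tfrac12(\s_{k-1}+\s_k)$. This produces a closed $\D_k\subset\D_{k-1}$, a normal form $\hat h_{k-1}$, a jet $S_{k-1}\in \Tc^{s,\b+}(\s'',\mu_{k-1},\D_k)$ and a remainder $R_{k-1}\in\Tc^{s,\b}(\s'',\mu_{k-1},\D_k)$ solving the homological equation, with
\ben
\meas(\D_{k-1}\setminus\D_k)\le C\, N_{k-1}^{\exp}\bigl(\ka_{k-1}/\delta_0\bigr)^{\exp'}\le \eps_{k-1}^{\alpha}
\een
after choosing $\alpha<\exp'/64-\text{(log corrections)}$; the logarithmic factor $N_{k-1}$ only costs an arbitrarily small power of $\eps_{k-1}$.

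Next I would set $h_k:=h_{k-1}+\hat h_{k-1}$ (still in normal form) and $f_k:=(f_{k-1})^+$ as in \eqref{f+}, with $\Phi_k=\Phi^1_{S_{k-1}}$. The crucial verification is the smallness hypothesis \eqref{hypo-S} for Lemma \ref{lem-f+}: by \eqref{estim-S},
\ben
[S_{k-1}]^{s,\b+}_{\s'',\mu_{k-1},\D_k}\le \frac{C}{\ka_{k-1}^2(\s_{k-1}-\s_k)^n}\,\eps_{k-1}\lesssim \eps_{k-1}^{1-1/32}\cdot k^{2n}\s_0^{-n},
\een
which is much smaller than $\tfrac{1}{16}\mu_{k-1}^2(\s_{k-1}-\s_k)\sim \mu_0^2\s_0\eps_{k-2}^{4/5}k^{-2}$ once $\eps$ is small enough (here we use $\eps_{k-1}\ll\eps_{k-2}$). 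Lemma \ref{lem-f+} then gives $[f_k]^{s,\b}_{\s_k,\mu_k,\D_k}\le \eps_k$ precisely because the recursion \eqref{eps+} was designed to match the three terms on the right-hand side of \eqref{estim-f+}, and Lemma \ref{eps} guarantees the super-exponential decay.

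Finally, the estimate $\|\Phi_k-\mathrm{Id}\|_s\le \eps_{k-1}^{1/6}$ follows from Proposition \ref{Summarize}(i) applied to $S_{k-1}$: indeed $\|\Phi_k(x)-x\|_s\le C\bigl((\s_{k-1}-\s_k)^{-1}+\mu_{k-1}^{-2}\bigr)[S_{k-1}]^{s,\b+}$, and plugging in the bound on $[S_{k-1}]^{s,\b+}$ and the parameter choices yields a total power of $\eps_{k-1}$ strictly greater than $1/6$ (the dominant loss is $\mu_{k-1}^{-2}\sim \eps_{k-2}^{-4/5}$, and $1-1/32-4/5>1/6$). The main obstacle in this argument is purely bookkeeping: one must check that the three exponents $1/5,\ 1/32,\ \alpha$ associated respectively with the radius decrease, the small-divisor loss, and the measure exclusion fit into the super-convergent recursion \eqref{eps+}, and that the normal form $A_k$ remains in the $\delta_0/4$-ball of $A_0$ in the $|\cdot|_\b$-norm uniformly in $k$ so that Proposition \ref{thm-homo} can be invoked again at step $k+1$; both are consequences of Lemma \ref{eps} combined with the choice $\delta_0=\eps^{1/4}$, which absorbs the (bounded) geometric series of perturbations of the normal form.
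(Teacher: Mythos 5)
Your proposal follows the same route as the paper: induction, verification of the normal-form hypothesis \eqref{Aom}, one application of Proposition \ref{thm-homo} per step, the smallness check \eqref{hypo-S} so that Lemma \ref{lem-f+} and Proposition \ref{Summarize} apply, the recursion \eqref{eps+} together with Lemma \ref{eps} to get $[f_k]\le\eps_k$, and the measure and $\|\Phi_k-\mathrm{Id}\|_s$ estimates from the parameter choices. However, there is one concrete step that does not survive careful bookkeeping: your verification of \eqref{Aom}. You telescope $A_{k-1}-A_0$ and invoke the inductive bound $[h_{j+1}-h_j]^{s,\b}_{\s_{j+1},\mu_{j+1},\D_{j+1}}\le C\eps_j$, claiming $|A_{k-1}-A_0|_\b\le C\eps^{7/6}$. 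First, that numerical claim is false as stated: the very first increment $B_0$ is already of size comparable to $\eps_0=\eps$, not $\eps^{7/6}$. More seriously, in the norm $[\,\cdot\,]^{s,\b}_{\s,\mu,\D}$ the Hessian enters with the weight $\mu^2$, so the inductive bound only gives $|A_{j+1}-A_j|_\b\le C\eps_j\mu_{j+1}^{-2}=C\mu_0^{-2}\eps_j^{1/5}$; summing, the dominant term is of order $\mu_0^{-2}\eps^{1/5}$, and since $1/5<1/4$ this is \emph{larger} than $\delta_0/4=\eps^{1/4}/4$ for small $\eps$. The same loss occurs for $\om_{k-1}-\om_0$. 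So the hypothesis of Proposition \ref{thm-homo} is not recovered by the route you chose.

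The paper avoids this by not going through the $\mu_k$-scale norm of $h_k-h_{k-1}$ at all: it bounds the increments $B_j$ and $\(\nabla_r f_j\)$ directly by the homological-equation estimates \eqref{estim-B}--\eqref{B} of Proposition \ref{thm-homo}, i.e.\ at the scale $\mu_j$ at which the equation for $f_j$ was solved. This yields $\sum_j|B_j|_\b\le \eps_0+\eps_1+\cdots\le 2\eps_0$ (or, keeping the $\mu^2$ weight of \eqref{B}, a bound of order $\mu_0^{-2}\eps^{11/30}$, since $\eps_j\mu_j^{-2}\le \mu_0^{-2}\eps_{j-1}^{7/6-4/5}$), which is comfortably below $\delta_0/4=\eps^{1/4}/4$. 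With this repair — replace your telescoping-through-$[h_{j+1}-h_j]$ argument by the direct use of \eqref{estim-B}/\eqref{B} — the rest of your argument (the check of \eqref{hypo-S}, the matching of the three terms of \eqref{estim-f+} with \eqref{eps+}, the exponent count $1-1/32-4/5>1/6$ for $\|\Phi_k-\mathrm{Id}\|_s$, and the measure exclusion) coincides with the paper's proof.
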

 \proof At step 1, $h_0=\lan\om_0(\r), r\ran+\frac 1 2 \langle \zeta, A_0(\r)\zeta\rangle$ and thus hypothesis \eqref{Aom} is trivially satisfied and we can apply Proposition \ref{thm-homo} to construct $S_0$, $R_0$, $B_0$ and $\D_1$ such that for $\r\in\D_1$
 $$\{h_0,S_0\}+f_0^T=\hat h_0+R_0.$$
Then we see that, using \eqref{estim-S} and defining  $s_{1/2}=\frac{\s_0+\s_1}2$, we have
$$ [S_{0}]^{s,\b+}_{\D_{1},\s_{1/2},\mu_0}\leq C \frac{\eps_0}{\ka_0^2 (\s_0-\s_{1/2})^n}\leq\frac 1{16} \mu_0^2 ( \s_0- \s_{1})$$
for $\eps=\eps_0$ small enough in view of our choice of parameters.
 Therefore both Proposition \ref{Summarize} and Lemma \ref{f+} apply  and thus for any $\r\in\D_1$,
 $\Phi_{1}(\cdot,\r)=\Phi^1_{S_{0}}\ :\ \O(1)\to \O(0)$ is an analytic symplectomorphism such that
 $$(h_{0}+f_{0})\circ \Phi_{1}= h_1+f_1$$ with  $h_1$, $f_1$, $\D_1$ and $\Phi_1$  satisfying the estimates (ii$)_{k=1}$ . In particular we have
 $$\|\Phi_1(x)-x\|_s\leq \frac C{\s_0\mu_0^2}[S_{0}]^{s,\b+}_{\D_{1},\s_{1/2},\mu_0}\leq \frac C{\s_0^{n+2}\mu_0^2\ka_0^2}\eps_0\leq \frac C{\s_0^{n+2}\mu_0^2}\eps_0^{1/3}\leq \eps_0^{1/6}     $$
 for $\eps_0$ small enough. 
 
 \medskip
 
 Now assume that we have completed the iteration up to step j.
We want to perform the step $j+1$. We first note that by construction (see Proposition \ref{thm-homo})
$$A_j =A_0+B_0+\cdots+B_{j-1}$$
and by \eqref{estim-B}
$$|A_j|_\b \leq \eps_0+\cdots +\eps_{j-1}\leq 2\eps_0\leq \frac 14 \delta_0$$ 
for $\eps_0$ small enough.
Similarly
$$\om_j=\om_0+\( \nabla_r f_0(\cdot,0;\r) \)+\cdots +\( \nabla_r f_{j-1}(\cdot,0;\r) \)$$
and thus $|\partial_r^j(\om_j-\om_0)|\leq \delta_0$ for $\eps_0$ small enough.\\ Therefore \eqref{Aom} is satisfied at rank j and we can apply
Proposition \ref{thm-homo} in order to construct $S_j$, $B_j$, $R_j$ and $\D_j$.  

Then we construct  $f_{j+1}$ as in \eqref{f+}, i.e.
$$
f_{j+1}= R_j+(f_j-f_j^T)\circ \Phi^1_{S_j}+\int_0^1\{ (1-t)(\hat h_j+R_j)+tf_j^T,S_j \}\circ \Phi^t_{S_j}\ \dd t.
$$
To control $f_{j+1}$ we  apply Lemma \ref{lem-f+} what we can do since, defining $\s_{j+1/2}=\frac{\s_j+\s_{j+1}}2$,
$$
[S_{j}]^{s,\b+}_{\D_{j+1},\s_{j+1/2},\mu_j}\leq  C \frac{\eps_j}{\ka_j^2 (\s_j-\s_{j+1})^n}  \leq \frac 1{8} \mu_j^2 ( \s_j- \s_{j+1}).$$
Therefore combining Lemma \ref{lem-f+} and  \eqref{eps+} we conclude that
$$[f_{j+1}]^{s,\b}_{\D_{j+1},\s_{j+1},\mu_{j+1}}\leq \eps_{j+1}.$$
On the other hand by Proposition \ref{thm-homo} the domain $\D_{j+1}$ satisfies
$$\meas (\D_{j}\setminus\D_{j+1})\leq CN^{\exp}_{j}\Big(\frac{\ka_j}{\delta_0}\Big)^{\exp'}\leq \eps_{j}^\alpha $$
 for some $\a>0$ and for $\eps_0=\eps$ small enough.

\endproof

\subsection{Transition to the limit and proof of Theorem \ref{main}}

Let $$\D'=\cap_{k\geq 0}\D_k.$$ In view of the iterative lemma, this is a borel set satisfying
$$\meas(\D\setminus\D')\leq 2 \eps ^\a.$$
Let us set
$$Q_l=\O^{s}(\s/\ell,\mu/\ell), \ \mathcal Z_s=\T_\s^n\times\C^n\times Y_s$$
where $\ell\geq 2$, and recall that  $\|\cdot\|_s$ denotes the natural norm on $\C^n\times\C^n\times Y_s$. It defines the distance on $\mathcal Z_s$. We used the notations introduced in Lemma \ref{iterative}. By Proposition \ref{changevar} assertion 2, for each $\r\in \D'$, the map $\Phi_k$ extends to $Q_2$ and satisfies on $Q_2$ the same estimate as on $\O_k$:
\be \label{estim-Phik}\Phi_k:\ Q_2\to \mathcal Z_s,\quad \|\Phi_k-id\|_s\leq C\mu_k^{-2}(\s_{k-1}-\s_k)^{-1}\eps_k\leq \eps_k^{1/6}.\ee
Now for $0\leq j\leq N$ let us denote $\Phi^j_N=\Phi_{j+1}\circ\cdots\circ \Phi_N$. Due to \eqref{Phik}, it maps $\O(N)$ to $\O(j-1)$. Due to \eqref{estim-Phik}, this map extends analytically to a map $\Phi^j_N:\ Q_3\to \mathcal Z_s,$ and for $M>N$, $\|\Phi_N^j-\Phi_M^j\|_s\leq C \eps_N^{1/6} $, i.e. $(\Phi_N^j)_N$ is a Cauchy sequence. Thus when $N\to \infty$ the maps  $\Phi^j_N$ converge to a limiting mapping $\Phi_\infty^j:\ Q_3\to \mathcal Z_s.$ Furthermore we have
\be \label{estim-Phiinf}\|\Phi_\infty^j-id\|_s\leq C\sum_{k\geq j} \eps_k^{1/6}\leq C\eps_j^{1/6},\ \forall j\geq 1.\ee
By the Cauchy estimate the linearized map satisfies
\be \label{linearized} \|D\Phi_\infty^j(x)-id\|_{\L(Y_s,Y_s)}\leq C\eps_j^{1/6},\quad \forall x\in Q_4,\ \forall j\geq 1.\ee
By construction, the map $\Phi_N0$ transforms the original hamiltonian $$H_0=\lan\om, r\ran+\frac 1 2 \langle \zeta, A_0(\om)\zeta\rangle+f$$ to $$H_N=\lan\om_N, r\ran+\frac 1 2 \langle \zeta, A_N(\om)\zeta\rangle+f_N.$$
Here $$\om_N=\om+\( \nabla_r f_0(\cdot,0;\r) \)+\cdots +\( \nabla_r f_{N-1}(\cdot,0;\r) \)$$ and $$A_N= A_0+B_0+\cdots+B_{N-1}$$ where 
$B_k$ is built from $\langle\nabla^2_{\zeta\zeta}f_k(\cdot,0)\rangle$ as in the proof of Proposition \ref{prop:homo4}.\\
Clearly, $\om_N\to \om'$ and $A_N\to A$ where the vector $\om'\equiv\om'(\r)$ and the operator $A\equiv A(\r)$ satisfy the assertions of Theorem \ref{main}.\\
Let us denote $\Phi=\Phi_\infty^0$, consider the limiting hamiltonian $H'=H_0\circ \Phi$ and write it as
$$ H'= \lan\om', r\ran+\frac 1 2 \langle \zeta, A(\r)\zeta\rangle+f'.$$
The function $f'$ is analytic in the domain $Q_3$. Since $H'=H_k\circ \Phi_\infty^k$,  we have
$$\nabla H'(x)=D\Phi_\infty^k(x)\cdot\nabla H_k(\Phi_\infty^k(x)).$$
As $[f_k]^{s,\b}_{\D_k,\s_k,\mu_k}\leq \eps_k$, we deduce 
 $$\nabla_r H_k(\Phi_\infty^k(\theta,0,0))=\om_k+O(\eps_k^{1/5})\quad \theta\in\T^n_{\frac \s 2}.$$ 
 Since the map $\Phi_\infty^k$ satisfies \eqref{linearized}, then $$\nabla_r H'(\theta,0,0)=\om'+O(\eps_k^{1/6}) \quad \text{ for all } k\geq 1\text{ and } \theta\in\T^n_{\frac \s 2} .$$ Hence, $\nabla_r H'(\theta,0,0)=\om'$ and thus
$$\nabla_r f'(\theta,0,0)\equiv 0\quad \text{ for } \theta\in\T^n_{\frac \s 2} .$$
Similar arguments leads to
$$\nabla_{\zeta_a} f'(\theta,0,0)\equiv 0 \text{ and }\nabla_{\zeta_a}\nabla_r f'(\theta,0,0)\equiv 0\quad \text{ for } \theta\in\T^n_{\frac \s 2} .$$
Now consider $\nabla_{\zeta_a}\nabla_{\zeta_b}H'(x)$. To study this matrix let us write it in the form \eqref{feo}, with $h=H_k$ and $x(1)=\Phi_\infty^k(x)$. Repeating the arguments used in the proof of Proposition \ref{composition} we get that
$$\nabla_{\zeta_a}\nabla_{\zeta_b}H'(\theta,0,0)=(A_k)_{ab}+0(\eps_k^{1/6}) \quad \text{ for all } k\geq 1\text{ and } \theta\in\T^n_{\frac \s 2} .$$
Therefore $\nabla_{\zeta_a}\nabla_{\zeta_b}H'(\theta,0,0)= A_{ab}$ i.e. 
$$\nabla_{\zeta_a}\nabla_{\zeta_b}f'(\theta,0,0)=0 \quad \text{ for } \theta\in\T^n_{\frac \s 2} .$$

This concludes the proof of Theorem \ref{main}.

\end{document}